%%%%%%%%%%%%%%%%%%%%%%%%%%%%%%%%%%%%%%%%%%%%%%%%%%%%%%%%%%%%%%%%%%%%%%%%%%%%
%Character varieties of torus links
%%%%%%%%%%%%%%%%%%%%%%%%%%%%%%%%%%%%%%%%%%%%%%%%%%%%%%%%%%%%%%%%%%%%%%%%

\documentclass[reqno]{amsart}

\usepackage{amssymb}
\usepackage[all]{xy}
\usepackage{caption}
\usepackage[pdftex]{hyperref}
\usepackage{graphicx}
\usepackage{array}
\usepackage{physics}
\usepackage{color}
\usepackage{stmaryrd}
\usepackage{xypic}
\usepackage{bm}
\usepackage{comment}

\setlength{\oddsidemargin}{-25pt}
\setlength{\evensidemargin}{-25pt}
\setlength{\textwidth}{520pt}
\setlength{\textheight}{630pt}
\setlength{\topmargin}{0pt}

\setlength{\parskip}{0.5\baselineskip}

\newtheorem{thm}{Theorem}[section]
\newtheorem{lem}[thm]{Lemma}
\newtheorem{prop}[thm]{Proposition}
\newtheorem{cor}[thm]{Corollary}
\theoremstyle{definition}

\newtheorem{ex}[thm]{Example}
\newtheorem{rem}[thm]{Remark}

\newcommand{\sldos}{\SL_2(\mathbb{C})}
\newcommand{\sltres}{\SL_3(\mathbb{C})}
\newcommand{\CC}{{\mathbb C}}
\newcommand{\ZZ}{{\mathbb Z}}

\newcommand{\G}{{\Gamma}}
\newcommand{\R}{{\mathcal{R}}}
\newcommand{\X}{{\mathcal{X}}}
\newcommand{\cU}{{\mathcal{U}}}
\newcommand{\cV}{{\mathcal{V}}}
\newcommand{\cW}{{\mathcal{W}}}

\newcommand{\Free}[1]{\mathbf{F}_#1}

\DeclareMathOperator{\Id}{Id}
          %Rank%
        %Jacobian%
           %Picard%
           %divisor of%
      %Parabolic degree%
       %Parabolic slope%
           %Image%

        %ParHom%
       %SParHom%
         %ParAut%
        %ParEnd%
       %SParEnd%
             %differentiated adjoint ad%
             %adjoint Ad%
\DeclareMathOperator{\Hom}{Hom\,}           %Hom%
           %End%
       %Parabolic extensions ParExt%
       %extensions Ext%
 %Parabolic tensor%
           %Aut%
\DeclareMathOperator{\Stab}{Stab}

\DeclareMathOperator{\GL}{GL}
\DeclareMathOperator{\SU}{SU}

\DeclareMathOperator{\SL}{SL}

\DeclareMathOperator{\PGL}{PGL}

\DeclareMathOperator{\Gr}{Gr}
\DeclareMathOperator{\Sym}{Sym}    
\DeclareMathOperator{\diag}{diag}

\title{Character varieties of torus links}

\subjclass[2020]{Primary: 57K31. Secondary: 14D20, 14C30}
\keywords{Torus link, representation varieties, character varieties, E-polynomial.}

\author{\'Angel Gonz\'alez-Prieto}
\address{Departamento de \'Algebra, Geometr\'ia y Topolog\'ia, Facultad de Ciencias Matem\'aticas, Universidad Complutense de Madrid, Plaza Ciencias 3, 28040 Madrid Spain, and Instituto de Ciencias Matem\'aticas (CSIC-UAM-UCM-UC3M), C.\ Nicolás Cabrera 13-15, 28049 Madrid Spain.}\email{angelgonzalezprieto@ucm.es}

\author{Javier Mart\'inez}

\address{Departamento de Matem\'atica Aplicada, Ciencia e Ingeniería de los Materiales y Tecnología Electrónica. E.S. Ciencias Experimentales y Tecnología, Universidad Rey Juan Carlos, C.\ Tulipán 0, 28933 Móstoles, Madrid Spain.}
\email{javier.martinezmar@urjc.es}

\author{Vicente Mu\~noz}
\address{Instituto de Matem\'aticas Interdisciplinar and 
Departamento de \'Algebra, Geometr\'ia y Topolog\'ia, Facultad de Ciencias Matem\'aticas, Universidad Complutense de Madrid, Plaza Ciencias 3, 28040 Madrid Spain.}\email{vicente.munoz@ucm.es}

\begin{document}

\begin{abstract}
In this paper, we study the geometry of the moduli space of representations of the fundamental group of the complement of a torus link into an algebraic group $G$, an algebraic variety known as the $G$-character variety of the torus link. These torus links are a family of links in the $3$-dimensional sphere formed by stacking several copies of torus knots. We develop an intrinsic stratification of the variety that allows us to relate its geometry with the one of the underlying torus knot. Using this information, we explicitly compute the $E$-polynomial associated to the Hodge structure of these varieties for $G = \textrm{SL}_2(\mathbb{C})$ and $\textrm{SL}_3(\mathbb{C})$, for an arbitrary torus link, showing an unexpected relation with the number of strands of the link.
\end{abstract}

\maketitle

%%%%%%%%%%%%%%%%%%%%%%%
\section{Introduction}\label{sec:introduction}
%%%%%%%%%%%%%%%%%%%%%%%
Fix a complex reductive algebraic group $G$. For any manifold $M$ with the homotopy type of a finite CW-complex, we can consider the moduli space $\X_G(M)$ parametrizing isomorphism classes of representations
$$
	\rho: \pi_1(M) \longrightarrow G
$$
of the fundamental group of $M$ into $G$. This moduli space has a natural complex algebraic structure and it is known in the literature as the $G$-character variety of $M$. 

These character varieties play an important role in algebraic geometry and theoretical physics. For instance, when $M = \Sigma$ is a closed surface, the celebrated non-abelian Hodge correspondence shows that the character variety $\X_G(\Sigma)$ is diffeomorphic to important moduli spaces attached to $\Sigma$, such as moduli spaces of flat connections on $\Sigma$ or moduli spaces of Higgs bundles, see \cite{Corlette:1988,NS,Simpson:1992,SimpsonI,SimpsonII}. These relations have been exploited to study some topological invariants of $\X_G(\Sigma)$, like its Betti numbers through a perfect Morse function naturally defined on the moduli space of Higgs bundles \cite{garciaprada-gothen,garciaprada-logares-munoz,hitchin}, or invariants of their complex structure using arithmetic techniques   \cite{Hausel-Rodriguez-Villegas:2008,mereb}, geometric techniques \cite{LMN,Martinez:2017,Martinez:2016} or topological recursion-based techniques \cite{GP-2019,GP-2018a,GPLM-2017}. 

Another fascinating horizon arises when we consider character varieties associated to a $3$-dimensional manifold $M$. In this case, $\X_G(M)$ gives information of the geometry of $M$, and can be used for instance to obtain results 
about metrics of constant curvature. For example, in the seminar work of Culler and Shalen \cite{Culler-Shalen} the authors used some simple algebro-geometric properties of the $\textrm{SL}_2(\CC)$-character variety to provide new proofs of Thurston’s theorem, that states that the space of hyperbolic structures on an acylindrical $3$-manifold is compact, or of the Smith Conjecture. 

A further interesting role of these varieties is as invariants of the $3$-manifold itself. For instance, we can take a link $L \subset \mathbb{S}^3$ and consider the character variety associated to the link complement $\X_G(L) := \X_G(\mathbb{S}^3 - L)$. This variety parametrizes the representations of the fundamental group $\pi_1(\mathbb{S}^3-L)$ of the link complement, and is thus invariant under isotopies of the link $L$. In particular, any invariant computed out of $\X_G(L)$ gives rise to a new link invariant for $L$. In this direction, Ekholm, Ng and Shende have proven that the Legendrian type of the conormal torus of a knot is a complete knot invariant \cite{Ekholm:2018}.

However, despite its importance in the geometry of $3$-manifolds, the lack of a known non-abelian counterpart in this context prevents the effective use of sophisticated techniques, such as Morse-theoretic tools, to understand character varieties of $3$-dimensional manifolds. For this reason, motivic invariants of $\X_G(M)$ can be computed, being the most important one the so-called $E$-polynomial
$$
	E(\X_G(M)) = \sum_{k, p, q} h_c^{k, p, q}(\X_G(M))\, u^pv^q \in \ZZ[u,v],
$$
where $h_c^{k, p, q}(\X_G(M)) = \dim_\CC H_c^{k, p, q}(\X_G(M))$ are the compactly-supported Hodge numbers of the Hodge structure of the cohomology of $\X_G(M)$.

These $E$-polynomials exhibit very useful properties of additivity with respect to stratifications and multiplicativity with respect 
to trivial monodromy fibrations that make their computation feasible using geometric techniques, although they are typically hard and require a specific-case analysis. Some known cases of these $E$-polynomials of character varieties are for torus knots and $G= \GL_2(\CC)$ \cite{LiXu}, $\SL_2(\CC)$ \cite{mun:2009}, $G = \SL_3(\CC)$ \cite{munozporti:2016} or $G = \SL_4(\CC)$ \cite{gonzalez2020motive}; and for the figure-eight knot and $G = \GL_3(\CC), \SL_3(\CC)$ and $\PGL_3(\CC)$ \cite{HMP}. Some works also exist in the literature for real groups, such as for torus knots and $G = \SU(2)$ \cite{martinezmun2015su2} or $G = \SU(3)$ \cite{GPMM}. Special mention deserves the case of trivial links, which is the only case fully understood for general rank for $G= \GL_n(\CC)$ or $\PGL_n(\CC)$ \cite{mozgovoyreineke:2015} and $\SL_n(\CC)$ \cite{florentinonozadzamora:2021}, see also \cite{florentino2009topology,Florentino-Lawton:2012}. In sharp contrast, almost nothing is known in the case of non-trivial links with several components, due to the much more intricate geometry of their character variety, being the only known examples the so-called twisted Hopf links, which are links with two components, for $G = \SL_2(\CC)$ and $\SL_3(\CC)$ \cite{gonmun:2022} and $G = \SU(2)$ and $\SU(3)$ \cite{gonlogmarmun:2023}. In this setting, related to this paper, \cite{lawtonflorentino:2024} analyzes the number of irreducible components and path-connectedness of the $\SL_2(\CC)$-character variety of the so-called generalized torus knot groups, which includes the torus knot and link cases. 

The aim of this paper is to study the character varieties of a non-trivial family of links with arbitrary many components: the so-called torus links $K_{n,m}^d$ for $n, m, d \geq 1$, with $n$ and $m$ coprime. These are links with $d$ components given by placing $d$ parallel copies of an $(n,m)$-torus knot, i.e., the knot generated in the torus by the straight line of slope $n/m$ in its universal cover. These torus links generalize the usual torus knots for $d = 1$ and twisted Hopf links for $d = 2$ and $m=1$. To study these representation varieties, we shall exploit the fact that there exists a map
$$
	\pi: \X_G(K_{n,m}^d) \to \X_G(K_{n,m}^1)
$$
given by restriction to the first strand of the torus link. This map $\pi$ is far from being locally trivial, since its fibers depend on the $G$-stabilizer of the representation of the base torus knot.

However, in this work we develop a suitable stratification by stabilizers of these character varieties that allows to understand the geometry of the map $\pi$ stratum-wise for arbitrary group $G$. In this way, the knowledge of the geometry of character varieties of torus knots can be wrapped to provide $E$-polynomials for torus links. We apply these techniques in the case of $G = \SL_2(\CC)$ and $\SL_3(\CC)$ to get explicit expressions of the $E$-polynomials of these character varieties for arbitrary $d$, and coprime $n$ and $m$, obtaining the main result of this paper. To the best of our knowledge, this is the first work computing these motivic invariants for a non-trivial family of links. In the following, notice that $\X_G(K_{n,m}^d) \cong \X_G(K_{m,n}^d)$ so, without loss of generality, we can suppose that $m$ is odd.

\begin{thm}
Let $\X_G(K_{n,m}^d)$ be the $G$-character variety of the torus link $K_{n,m}^d$ with $d, n, m \geq 1$, $n$ and $m$ coprime, and $m$ odd. Then, for $G = \SL_2(\CC)$ we have that the $E$-polynomial of the character variety is
\begin{align*}
e(\X_{\SL_2(\CC)}(K_{n,m}^d))  & =  \frac{1}{2}(m-1)(n-1)(q-2)(q^3-q)^{d-1} \\
& \quad +(mn-1)\left((q^2+q)(q^3-q)^{d-2}-(q-1)^{d-2}(2q^{d-1}-1) \right) \\
& \quad + 2\left( (q^3-q)^{d-2}-(q^2-q)^{d-2}+ \frac{1}{2}q((q+1)^{d-2}+(q-1)^{d-2}) \right) \\
& \quad + (q-2)\frac{1}{2}\left( (q+1)^{d-1}+(q-1)^{d-1} \right) +\frac{1}{2}\left( (q+1)^{d-1}-(q-1)^{d-1} \right).
\end{align*}
Additionally, for $G = \SL_3(\CC)$ we have

\begin{align*}
e(\X_{\SL_3(\CC)}(K_{n,m}^d)) = & \phantom{+}  \frac{(q^3-q)^{d-1}}{12}(q^5-q^3)^{d-1} \left((m-1)(m-2)(n-1)(n-2) (q^4+4q^3-9q^2-3q+12) \right.  \\  & \left.  + \,6(n-1)(m-1)(n+m-4)(q^2-3q+3) \right) \\
& + 3mn\left(\left\lfloor\frac{m-1}{2}\right\rfloor \left\lfloor \frac{n-1}{2}\right\rfloor(q-2)+\delta_n(m-1)(q-1)\right) (q^3-q)^{d-1} \\ & \quad \left(q^{3d-3}(q+1)^{d-1}(q-1)^{d-2} -(2q^{2d-2}-1)(q-1)^{d-2}+(q-1)^{d-1} \right) \\ 
& + (q-3mn-1)\left(\left\lfloor\frac{m-1}{2}\right\rfloor\left\lfloor\frac{n-1}{2}\right\rfloor(q-2)+\delta_n(m-1)(q-1)\right) (q-1)^{2d-2}(q^2+q)^{d-1} \\ 
& +3(q^8-q^6-q^5+q^3)^{d-2} + 3(q-1)^{2d-4}(q^{3d-6}-q^{d-1}) \\ & +\frac{1}{2}(q-1)^{2d-4}q(q+1)+\frac{3}{2}(q^2-1)^{d-2}q(q-1) \\ &  
+(q^2+q+1)^{d-2}q(q+1)-3(q-1)^{d-2}q^{d-2}(q^2-1)^{d-2}(2q^{2d-4}-q)  \\ 
& + (3mn-3)\left( \vphantom{\frac{1}{3}} \frac{1}{2}\left( (q-1)^{2d-2} + (q^2-1)^{d-1}\right) + (q-1)^{2d-3}((q^2+q)^{d-1} - 2q^{d-1} +1) \right. \\& + (q - 1)^{d-1}\big((q - 1)^{d-2}q^{d-2}((q + 1)^{d-2} - 1) + \frac12 (q - 1)^{d-2} - \frac12 (q + 1)^{d-2} \big)   \\ &   + (q-1)^{2d-4} \left( (q^2+q+1)^{d-1}(q+1)^{d-2}q^{3d-4}-(q^2+q)^{d-2}(2q^{2d-2}-1) \right. \\ &  \left. -\, q^{d-2}(q^{d-1} - 1)^2 - (2q^{2d-3}-1) ((q^2+q)^{d-1} - 2q^{d-1} + 1))  \vphantom{\frac{1}{3}}\right) \\
& +(q-3mn-1)(q-1)^{d-1}\left( (q^3-q)^{d-2}-(q^2-q)^{d-2}+ \frac{1}{2}q(q+1)^{d-2}+\frac{1}{2}q(q-1)^{d-2} \right) \\ & + \frac{m^2n^2-3mn+2}{2} \left( \vphantom{\frac{1}{3}} (q-1)^{2d-2}+3(q-1)^{2d-3}((q^2+q)^{d-1}-2q^{d-1}+1) \right. \\ 
	& \left. +\,(q-1)^{2d-4}\left((q^2+q+1)^{d-1}(q+1)^{d-1}q^{3d-3}-3(q+1)^{d-1}(2q^{3d-3}-q^{d-1}) \right. \right. \\ & \left. \left. +6q^{d-1}(q^{2d-2}-1)+2  \right) \vphantom{\frac{1}{3}} \right) + (mn-1)\left( q- 3mn-1 \right) \left(\frac{1}{2} q{\left(q - 1\right)}^{2d-3} - q^{d-1} {\left(q - 1\right)}^{2d-3}\right) \\ & + \frac{\delta_{nm}}{2} q(q-1)^{d} (q + 1)^{d-2}  + \left( q\left\lfloor \frac{mn}{2} \right\rfloor + \left\lfloor \frac{mn-1}{2} \right\rfloor\right) {q^{d-1}\left(q + 1\right)}^{d-2} {\left(q - 1\right)}^{2d-2} 
 \\  & -\frac{3mn(mn-1)}{2} {q^{d-1}\left(q + 1\right)}^{d-1} {\left(q - 1\right)}^{2d-3} + \frac{(q^2-1)^{d-1}}{2}(q-1)\left(q-\delta_{mn} \right)  \\ & + \frac{1}{6}(q-1)^{2d-2} \left((q-1)\left(q-3mn-1\right) +6m^2n^2 \right) +\frac{1}{3}(q^2+q+1)^{d-1} (q-1)(q+2),
\end{align*}
where $\delta_k=1$ if $k$ is even and $\delta_k=0$ if $k$ is odd.
\end{thm}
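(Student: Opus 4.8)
The plan is to turn the computation into a fibred one over the (already understood) torus knot character variety, via the restriction map $\pi\colon\X_G(K^d_{n,m})\to\X_G(K^1_{n,m})$ to the first strand, and then to add up $E$-polynomials stratum by stratum. I would first record a presentation of the link group in which this map is manifest: writing $h=a^n=b^m$ for the generator of the centre of the torus knot group $\pi_1(\mathbb{S}^3\setminus K^1_{n,m})=\langle a,b\mid a^n=b^m\rangle$, and using the Seifert fibred structure of $\mathbb{S}^3\setminus K^d_{n,m}$ over a $d$-holed sphere with two cone points of orders $n$ and $m$, one obtains a presentation of the form
\[
\pi_1(\mathbb{S}^3\setminus K^d_{n,m})\;=\;\langle\,a,b,\gamma_2,\dots,\gamma_d\;\mid\;a^n=b^m,\ [\gamma_i,h]=1\ \ (2\le i\le d)\,\rangle,
\]
with $a,b$ generating the torus-knot subgroup carried by the first strand. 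Killing $\gamma_2,\dots,\gamma_d$ retracts this group onto the torus knot group, so $\pi$ is a surjection admitting a section, and the fibre of $\pi$ over (the class of) a semisimple $\rho_0$ is the variety of $(d-1)$-tuples $(g_2,\dots,g_d)$ with each $g_i\in Z_G(\rho_0(h))$, taken modulo conjugation by $\Stab_G(\rho_0)=Z_G(\img\rho_0)$. Thus the fibre is controlled by the reductive pair $\Stab_G(\rho_0)\subseteq Z_G(\rho_0(h))$, which depends only on the conjugacy type of $\rho_0$.

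Next I would stratify $\X_G(K^1_{n,m})$ into locally closed pieces on which this pair of subgroups is constant --- equivalently, by the conjugacy type of $\rho_0(h)$ and the (partial-)reducibility type of $\rho_0$ --- and exploit additivity of the $E$-polynomial over the stratification. Over each stratum $S$, $\pi^{-1}(S)\to S$ is a fibre bundle whose fibre is an explicit variety attached to $L:=Z_G(\rho_0(h))$: a power $G^{d-1}$ over the irreducible locus (where $\rho_0(h)$ is central), a power $L^{d-1}$ (or a quotient of it) of a proper centraliser --- a maximal torus, or, for $\SL_3(\CC)$, a subgroup of $\GL_2$-type --- over the reducible strata where $\rho_0(h)$ is non-central, and the free-group character variety $\X_G(\mathbf{F}_{d-1})$ over the finitely many central representations (where $\Stab_G(\rho_0)=G$), possibly after passing to the quotient by the residual finite Weyl-type symmetry of the stratum. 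For those strata where the bundle is (Zariski-)locally trivial with trivial monodromy one gets $E(\pi^{-1}(S))=E(S)\,E(\text{fibre})$; when there is a nontrivial monodromy, or a residual finite group $\Gamma$ acting, one instead computes $E$ of the associated bundle or of the quotient using the action on the cohomology of the fibre, everything remaining of Hodge--Tate type so that all $E$-polynomials lie in $\ZZ[q]$, $q=uv$.

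The concrete inputs are: the $E$-polynomials and stabiliser stratifications of the torus-knot character varieties $\X_{\SL_2(\CC)}(K^1_{n,m})$ and $\X_{\SL_3(\CC)}(K^1_{n,m})$ from \cite{mun:2009} and \cite{munozporti:2016}, which provide the $E(S)$ of the base strata and, crucially, the counts of admissible eigenvalue configurations of $\rho_0(a),\rho_0(b)$ (roots of unity constrained by $a^n=b^m$) --- these counts are the origin of the polynomial coefficients in $m$ and $n$, of the floors $\lfloor\tfrac{m-1}{2}\rfloor$, $\lfloor\tfrac{mn}{2}\rfloor$, and of the parity indicators $\delta_{(\cdot)}$; the $E$-polynomials of the free-group character varieties $\X_{\SL_2(\CC)}(\mathbf{F}_k)$ and $\X_{\SL_3(\CC)}(\mathbf{F}_k)$; and the $E$-polynomials of the quotients $L^{k}/\Gamma$ for $L$ a maximal torus or a $\GL_2$-type subgroup. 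Summing $E(\pi^{-1}(S))$ over all strata and simplifying yields the stated formulas; the powers $d-1$ and $d-2$ are exactly the exponents of the $(d-1)$-fold products $L^{d-1}$ appearing as fibres (and of their codimension-one boundary strata), which is the source of the announced dependence on the number $d$ of strands, and setting $d=1$ recovers the known torus-knot answers.

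The main obstacle is the $\SL_3(\CC)$ computation. There $Z(\SL_3(\CC))=\ZZ/3$ and $Z_G(\rho_0(h))$ can be $\SL_3(\CC)$, a subgroup of type $S(\GL_1\times\GL_2)$, or a maximal torus $(\CC^*)^2$, and each possibility must be combined with the several partial-reducibility types of $\rho_0$ itself; keeping track of all these strata, of the nontrivial monodromies of the fibrations $\pi^{-1}(S)\to S$ over them (which already occur over the irreducible locus and require the monodromy action on the cohomology of the fibre, not merely its Poincar\'e polynomial), of the residual quotients by $\ZZ/2$- and $S_3$-actions (which require the equivariant Hodge structure of the fibres), and of the GIT identifications along the frontier between the reducible and irreducible loci, is where the real work lies. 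Establishing in each case that $\pi^{-1}(S)\to S$ is at least a locally trivial fibration, so that one may legitimately work fibrewise, is the other delicate point; I would handle it stratum by stratum using explicit local sections read off from normal forms of $\rho_0$.
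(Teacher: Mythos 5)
Your proposal follows essentially the same route as the paper: the restriction map $\pi$ to the first strand with fibre $Z_G(\rho_0(a^n))^{d-1}$ modulo $\Stab_G(\rho_0)$, a stratification of the torus-knot character variety by the conjugacy type of the pair $\Stab_G(\rho_0)\subseteq Z_G(\rho_0(a^n))$, and stratum-wise addition of $E$-polynomials using the known torus-knot and free-group inputs together with equivariant ($\ZZ_2$- and $S_3$-) $E$-polynomials for the residual finite symmetries. The only small overstatement is that over the irreducible locus no monodromy analysis is needed, since there the residual group is the connected group $\PGL_r(\CC)$ and the $E$-polynomial is simply multiplicative.
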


As we will see, the calculation of these $E$-polynomials motivates the study of character varieties for which the isomorphism relation must preserve a given flag, a new kind of character varieties that we baptize as $\lambda$-character varieties. These varieties seem to play an important role in the geometry of character varieties of links, so we encourage further research on their own.

An interesting and unexpected feature of the result obtained is that $d$ and $n,m$ play very different roles in the final expression of the $E$-polynomial. Whereas $n$ and $m$ always appear as coefficients of this polynomial, $d$ is presented in the exponent of some of these polynomials. This is a very suggestive behavior that seems to point out that a kind of Topological Field Theory may exist to compute these $E$-polynomials of torus links with $d$ strands out of the knowledge of torus knots, in the same vein that the appearance of the genus in the formula of the $E$-polynomial of character varieties of surfaces pointed to the construction of a Topological Quantum Field Theory for them \cite{GPLM-2017,Martinez:2016}.

Finally, we want to stress that the techniques developed in this paper can be extended in several directions. First, the stratification constructed for the character variety of torus links works for general groups $G$, in particular for higher rank groups $\SL_n(\CC)$. However, the number of strata arising in this decomposition grows exponentially with the rank, so a computer-based approach would be necessary to deal with the higher rank case. Additionally, the geometry of $\SL_n(\CC)$-character varieties of torus knots, which are needed to pass to torus links, is only known for rank $n \leq 4$ so far. Hence, a more intensive research on the geometry of high rank character varieties of torus knots is needed. But, more importantly, the tools we have developed in this work can be applied to other families of links, such as links of parallelly placed knots, in which no further difficulties are expected and the techniques should be applicable verbatim. 

\subsection*{Structure of the manuscript} The structure of this manuscript is as follows. In Section \ref{sec:introduction} we briefly review the fundamentals of character varieties and their Hodge structures on cohomology, including the relevant properties of $E$-polynomials. In Section \ref{sec:representationstrata}, we zoom-in to character varieties of torus links, developing a stratification technique to relate their geometry with the one of the underlying torus knot. With these tools, in Section \ref{sec:sl2geodesc} we address the case of $\SL_2(\CC)$-character varieties, computing their $E$-polynomials. As an intermediate step towards the higher rank setting, in Section \ref{sec:lambda-character-varieties} we develop the theory of $\lambda$-character varieties and compute explicitly some examples. The stratification for the $\textrm{SL}_3(\CC)$-character varieties are described explicitly in Section \ref{sec:sl3charactervariety}. To compute the $E$-polynomial of each stratum, several auxiliary calculations are carried out in Section \ref{sec:git-quotients}, and the final computation is shown in Section \ref{sec:E-poly-rank3}.

\subsection*{Acknowledgements}

The first author acknowledges Alejandro Calleja for very useful discussion around character varieties of torus knots. The first author has been partially supported by Comunidad de Madrid R+D Project PID/27-29 and Ministerio de Ciencia e Innovaci\'on Project PID2021-124440NB-I00 (Spain), the second author has been partially supported by Comunidad de Madrid multiannual agreement with Universidad Rey Juan Carlos under the grant Proyectos I+D para Jóvenes Doctores, ref. M2731, project NETA-MM, and the third author has been partially supported by Comunidad de Madrid R+D Project PID/27-29 and Ministerio de Ciencia e Innovaci\'on Project PID2020-118452GB-I00 (Spain). Several calculations were assisted with the programming language SageMath.

%%%%%%%%%%%%%%%%%%%%%%%
\section{Representation varieties and their Hodge structures}\label{sec:introduction}
%%%%%%%%%%%%%%%%%%%%%%%

Given a finitely presented group $\Gamma$ and a reductive complex algebraic group $G$, a \emph{representation} of $\Gamma$ into $G$  is a group homomorphism $\rho: \Gamma \to G$. If $\Gamma=\langle x_1,\ldots, x_k \mid r_1,\ldots,r_s \rangle$ is a presentation for $\Gamma$, the space of all representations can be explicitly described as
$$
\R_G(\Gamma)= \Hom(\Gamma,G)= \lbrace (A_1,\ldots, A_k) \mid r_j(A_1,\ldots,A_k)=\Id, 1\leq j \leq s \rbrace \subset G^k,
$$
which is an affine complex algebraic set. Two representations $\rho_1,\rho_2$ are declared equivalent if there exists $g\in G$ such that $\rho_1(\gamma)=g\rho_2(\gamma)g^{-1}$ for all $\gamma \in \Gamma$. If we regard $G\subset \GL_r(\CC)$ as a linear group, this equivalence corresponds to a $G$-change of basis in $\CC^r$ of the representation. The moduli space of representations, the \emph{character variety}, is the GIT quotient with respect to the conjugacy action
$$
\X_G(\Gamma)=\R_G(\Gamma) \sslash G.
$$

We consider throughout the paper the following finitely presented groups:
\begin{align}
\Gamma^d(n,m) & = \langle a,b,f_1,\ldots, f_{d-1} \mid a^n f_k = f_k b^m, k=0,\ldots ,d-1 \rangle, \label{eqn:toruslinkgroup} 
\end{align}
where $n,m$ and $d$ are positive integers and it is implicitly assumed that $f_0=1$ in \eqref{eqn:toruslinkgroup}.

To shorten notation, we shall also denote $\Gamma(n,m) = \Gamma^1(n,m) = \langle a, b \,|\, a^n = b^m \rangle$. When $n$ and $m$ are coprime, this group represents the \emph{knot group} of a torus knot, that is, the fundamental group of the knot complement $\Gamma(n,m)=\pi_1(\mathbb{S}^3-K_{n,m})$, where $K_{n,m}$ is the simple closed curve on the torus that is the image of the line of rational slope $n/m$ in the square representation of the torus, which we assume embedded in $\mathbb{S}^3$.

In general, we can consider $d$ copies of the knot $K_{n,m}$ placed parallel to each other, leading to a link with $d$ components denoted by $K_{n,m}^d$, as depicted in Figure \ref{fig:turuslink}. This torus link is sometimes also denoted as $K_{dm,dn}$, extending the previous definition to the non-coprime case, where  the link with $d$ connected components is obtained by conveniently identifying certain strands on the cylinder when its opposite ends are glued to obtain a torus \cite{mur:2008}. It is shown in \cite{argyreskulkarni:2019} that the link group of $\mathbb{S}^3-K^d_{n,m}$ is precisely $\pi_1(\mathbb{S}^3-K^d_{n,m}) = \Gamma^d(n,m)$.

\begin{figure}[h]\label{fig:turuslink}
\includegraphics[width=9cm]{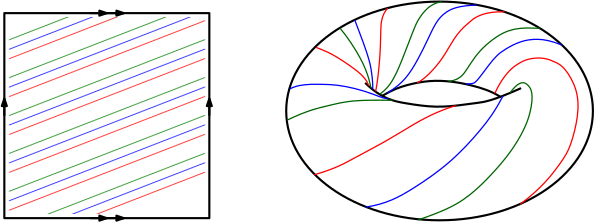}
\caption{Torus link $K^3_{2,5}$. It is composed of three strands that wrap the torus twice around the parallels and five times around the meridians.}\label{fig:turuslink}
\end{figure}

\begin{rem} 
For $n \geq 1$, the twisted Hopf link group,
$$
H_n=\langle a,b \mid [a^n,b]=1 \rangle,
$$
studied in \cite{gonmun:2022} and \cite{gonlogmarmun:2023}, corresponds to $\Gamma^2(n,1)$, since
$$
\Gamma^2(n,1)  =\langle a,b,f_1 \mid a^n=b, \, a^n f_1 = f_1 b \rangle = \langle a, f_1 \mid a^n f_1 = f_1 a^n \rangle = \langle a, f_1 \mid [a^n,f_1]=1 \rangle.
$$
Notice that here $[a,b] = aba^{-1}b^{-1}$ denotes the group commutator.
\end{rem}

%%%%%%%%%%%%%%%%%%%%%%%%%%%%%%%%%%%%%%%%%%%
\subsection{$E$-polynomials}
%%%%%%%%%%%%%%%%%%%%%%%%%%%%%%%%%%%%%%%%%%%

Given any quasi-projective complex variety $Z$, its $E$-polynomial is defined as
$$
e(Z)=\sum_{p,q,k} (-1)^kh_c^{k,p,q}(Z)u^pv^q,
$$
where $h_c^{k,p,q}(Z)$ are the \emph{(mixed) Hodge numbers} of $Z$. Deligne proved \cite{deligne:1971} that the cohomology groups $H^k(Z)$ and $H^k_c(Z)$ admit so-called mixed Hodge structures, which are defined as an (ascending) weight filtration $\ldots \subset W_{l-1}\subset W_l \subset \ldots \subset H^k_c(Z)$ and a (descending) filtration $F^{\bullet}$ that induces a pure Hodge structure of weigth $s$ on each graded piece $\Gr_s^W(H^k_c(Z))=W_s/W_{s-1}$. Both filtrations provide the \emph{mixed Hodge numbers} of $Z$ as
$$
h_c^{k,p,q}(Z)=\dim \Gr_F^p \Gr_{p+q}^W H_c^k(Z),
$$
extending the classical Hodge numbers of complex compact K\"ahler varieties to the non-smooth or non-compact cases. For more details, see \cite{Peters-Steenbrink:2008}.

These $E$-polynomials have been extensively studied in recent years, since they encode topological, algebraic and arithmetic information of the underlying variety. In many cases $h_c^{k,p,q}(Z)=0$ if $p\neq q$, so that that the variable $q=uv$ can be used. Varieties with this property are said to be of \emph{balanced type}, and some examples are:
\begin{itemize}
\item $e(\CC^r)=q^r$,
\item $e(\GL_r(\CC))=(q^r-1)(q^r-r)\ldots(q^r-q^{r-1})$,
\item $e(\SL_r(\CC))=(q^r-1)(q^r-q)\ldots(q^r-q^{r-2})q^{r-1}$. 
\end{itemize}
In particular, $e(\sldos)=q^3-q$ and $e(\sltres)=(q^3-1)(q^3-q)q^2=(q^3-1)(q^5-q^3)$.

We list here some of the basic properties of $E$-polynomials that will be applied (for proofs, please refer to \cite{LMN}). 
%The first one is particular important for our purposes, since it will allow us to use the stratifications of $\X^d_{\sldos}(n,m)$ that were described in Section \ref{sec:sl2geodesc}:
\begin{itemize}
\item $e(Z)=\sum_{i=1}^n e(Z_i)$ if $Z=\sqcup_{i=1}^n Z_i$ and $Z_i$ are locally closed in $Z$.
\item $e(Z)=e(F)e(B)$ for any fiber bundle $F\rightarrow Z \rightarrow B$ in the analytic topology that induces trivial monodromy action on the cohomology of the fiber. Notice that this hypothesis holds trivially in several cases, such as when the fiber bundle is also locally trivial in the Zariski topology or for principal bundles where the gauge group is a connected algebraic group. 
\end{itemize}

This definition can be extended to the equivariant setting. When a finite group $F$ acts on a complex variety $Z$, it also acts on $H^{\ast}_c(Z)$ respecting its mixed Hodge structure. In particular, an equivariant version $e_F(Z)$ of the $E$-polynomial can be defined, 
$$
e_F(Z)=\sum_{p,q,k}(-1)^k[H^{k,p,q}_c(Z)]\,u^pv^q \in R(F)[u,v],
$$
where the coefficients $[H^{k,p,q}_c(Z)]$ belong now to the representation ring $R_\CC(F)$ of virtual representations of $F$. Hence, we can always write in a unique way
$$
	e_F(Z)=\sum a_j T_j,
$$
where $T_j$ are the irreducible characters in $R_\CC(F)$ with $T_0$ being the trivial representation. This is particularly useful to analyze $F$-quotients of $Z$, since we have that $H^*_c(Z/F) = H^*_c(Z)^F$, the $F$-fixed part of the cohomology, and thus $e(Z/F)$ is just the coefficient $a_0$ in the sum above. A straightforward calculation shows that the equivariant $E$-polynomial satisfies the same additive and multiplicative properties as the usual $E$-polynomial. For more information about the equivariant setting, see \cite[Section 4]{florentinosilva}.

\begin{ex}\label{ex:pm-formula}
Fix $F=\ZZ_2 = \ZZ/2\ZZ$ and let $T$ and $N$ be the trivial and non-trivial irreducible characters. Then, the equivariant $E$-polynomial of any variety $Z$ acted by $\ZZ_2$ can be written as $e_{\ZZ_2}(Z)=aT+bN$. Now, let $Z_1,Z_2$ be varieties with $\ZZ_2$-actions, so that $e_{\ZZ_2}(Z_i)=a_iT+b_iN$ for $i=1,2$. Notice that $T \otimes T = T$, $T \otimes N = N$ and $N \otimes N = T$, and thus 
$$
e_{\ZZ_2}(Z_1\times Z_2)= e_{\ZZ_2}(Z_1) \otimes e_{\ZZ_2}(Z_2)= (a_1a_2+b_1b_2)T+(a_1b_2+a_2b_1)N.
$$
In particular,
\begin{equation}\label{eqn:epolyZ2}
e((Z_1\times Z_2)/\ZZ_2)= e(Z_1\times Z_2)^{\ZZ_2} =a_1a_2+b_1b_2=e(Z_1)^+e(Z_2)^++e(Z_1)^-e(Z_2)^-,
\end{equation}   
where $e(Z)^+ = e(Z / \ZZ_2),e(Z)^- = e(Z) - e(Z)^+$ denote the $E$-polynomials of the invariant and non-invariant part of the cohomology, respectively. 
\end{ex}

\begin{rem}
Since equivariant $E$-polynomials are multiplicative for Zariski-locally trivial bundles, the same formula holds in the case of a $\ZZ_2$-equivariant fiber bundle. Indeed, if $F \to X \to B$ is such a bundle, then we have
\begin{equation}\label{eqn:pm-formula-fiber}
	e_{\ZZ_2}(X) = \left(e(B)^+e(F)^+ + e(B)^-e(F)^-\right)T + \left(e(B)^+e(F)^-+e(B)^-e(F)^+\right)N.
\end{equation}
\end{rem}

\begin{rem}\label{rem:solve-for-bundle}
Formula (\ref{eqn:pm-formula-fiber}) can be used reciprocally to compute the $\ZZ_2$-equivariant $E$-polynomial of $B$ given the ones of $F$ and $X$. Indeed, a direct calculation shows that
\begin{equation}\label{eqn:pm-formula-fiber-forbase}
	e_{\ZZ_2}(B) = \frac{e(X)^+e(F)^+ - e(X)^-e(F)^-}{(e(F)^+)^2 - (e(F)^-)^2}T + \frac{e(X)^-e(F)^+-e(X)^+e(F)^-}{(e(F)^+)^2 - (e(F)^-)^2}N.
\end{equation}
\end{rem}

\begin{ex}\label{ex:6}
Fix $F=S_3$, the symmetric group in three elements. 
Denote by $\alpha=(1\, 2 \, 3)$ the $3$-cycle and $\tau=(1\,2)$ a
transposition. There are three irreducible representations
$T,S,D$ of $S_3$, where $T$ is the trivial one, $S$ is the sign representation,  and $D$ is the standard representation.
The sign representation is one-dimensional $S=\CC$, where $\alpha\cdot x=x$ and $\tau\cdot x=-x$.
The standard representation is two-dimensional $D=\CC^2$, where
$$
	\alpha \mapsto \begin{pmatrix}0 & -1 \\ 1 & -1\end{pmatrix}, \qquad \tau \mapsto \begin{pmatrix} 0 & 1 \\ 1 & 0\end{pmatrix}.
$$
%$\tau\cdot z=\overline{z}$, $\alpha\cdot z=e^{2\pi i/3} z$.
The multiplicative table of $R_\CC(S_3)$ is easily checked to be given by 
 \begin{align*}
  T\otimes T &=T,  &&   T\otimes S =S, \\
  T\otimes D &=D, &&  S\otimes S =T, \\
  S\otimes D &=D, &&  D\otimes D =T +S + D.
 \end{align*}
\end{ex}

%%%%%%%%%%%%%%%%%%%%%%%%%%%%%%%%%%%%%%%%%%%
\section{Stratification of representation varieties of torus links} \label{sec:representationstrata}
%%%%%%%%%%%%%%%%%%%%%%%%%%%%%%%%%%%%%%%%%%%

Let us fix a reductive complex algebraic group $G$ and let us denote the associated representation and character varieties by
$$
	\R_G^d(n,m) = \Hom(\Gamma^d(m, n), G), \qquad \X_G^d(n,m) = \Hom(\Gamma^d(n,m), G) \sslash G.
$$
We shall drop the mention to $G$ in the notation unless necessary.
We abbreviate $\R(n,m) = \R^1(n,m)$ and $\X(n,m) = \X^1(n,m)$. Observe that the representation variety is given explicitly by
$$
	\R^d(n,m) = \lbrace(A,B,F_1,\ldots,F_{d-1})\in G^{d+1} \mid A^n=B^m, \: F_iA^nF_i^{-1}=A^n,\: i=1,\ldots,d-1 \rbrace.
$$

The inclusion map $i: \G(n,m)\longrightarrow \G^d(n,m)$ induces a $G$-equivariant surjective map
\begin{equation}\label{eq:proj-knot-link}
	\pi: \R^d(n,m) \to \R(n,m), \qquad \pi(A,B, F_i) = (A,B).
\end{equation}
The fiber of this map over $(A,B) \in \R(n,m),$ is $\Stab_{G}(A^n)^{d-1}$, the stabilizer of $A^n$ in $G$ under conjugation. This map is not locally trivial in the Zariski topology. Indeed, its fibers vary from point to point.

To analyze it, let us consider the Luna stratification $\{U_H\}$ of $\R(n,m)$ under the adjoint action. To be precise, we have that $U_H \subset \R(n,m)$ is the collection of representations $\rho \in \R(n,m)$ such that the $G$-stabilizer of $\rho$ under the conjugacy action is conjugated to the subgroup $H < G$. By the Luna slice theorem, for each $U_H$, we have a commutative diagram
$$ 
\xymatrix{
\widetilde{U}_H \ar[d] \ar[r] &U_H \ar[d] \\
\widetilde{\mathcal{U}}_H := \widetilde{U}_H \sslash G \ar[r] & \mathcal{U}_H := U_H \sslash G 
}
$$
where the morphisms $\widetilde{\mathcal{U}}_H \to \mathcal{U}_H$ and $\widetilde{U}_H \to U_H$ are \'etale maps, and the vertical arrows are the quotient maps.

We shall need an explicit description that can be given as follows. Given a subgroup $H < G$, let $N_G(H) = \{g \in G \,|\, gHg^{-1} = H\}$ be the normalizer of $H$. Clearly, $H$ is a normal subgroup of $N_G(H)$, so we can form the quotient $N_H = N_G(H) / H$.

\begin{prop}\label{prop:structure-character-var}
Consider the subvariety $U_H^0 = \{\rho \in U_H \,|\, \Stab_G(\rho) = H\}$. Then we have a commutative diagram
$$ 
\xymatrix{
\widetilde{U}_H =  U_H^0 \times G/H \ar[d] \ar[r] &U_H = \left(U_H^0 \times G/H \right) \sslash N_H \ar[d] \\
\widetilde{\mathcal{U}}_H = U_H^0 \ar[r] & \mathcal{U}_H  = U_H^0 \sslash N_H
}
$$
where the action of $N_H = N_G(H) / H$ on $U_H^0 \times G/H$ is given by $nH \cdot (\rho, gH) = (n \rho n^{-1}, gn^{-1}H)$.
%$$
%	\widetilde{U}_H = \left(U_H^0 \times G/H \right)  \to \widetilde{\mathcal{U}}_H = U_H^0 ,
%$$
%$$
%	{U}_H = \left(U_H^0 \times G/H \right) \sslash N_H \to {\mathcal{U}}_H = U_H^0 \sslash N_H,
%$$
%where $N_H=N_G(H)/C_G(H)$.
\end{prop}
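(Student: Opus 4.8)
The plan is to make explicit the abstract Luna slice picture recalled above by identifying the relevant étale slice with $G/H$ and tracking the residual group action. First I would observe that, since $\Gamma(n,m)$ is finitely presented and $G$ is reductive, the conjugation action of $G$ on $\R(n,m)$ has the property that for $\rho \in U_H$ the stabilizer $\Stab_G(\rho)$ is reductive (being the stabilizer of a point with closed orbit, or after replacing $\rho$ by a polystable representative in its closed orbit), so that $G/\Stab_G(\rho)$ is affine and the orbit $G\cdot \rho$ is closed in $U_H$; this is what allows the slice theorem to apply. The key bookkeeping point is that $U_H$, by definition, consists of representations whose stabilizer is \emph{conjugate} to $H$, whereas $U_H^0$ consists of those whose stabilizer is exactly $H$; every $G$-orbit in $U_H$ meets $U_H^0$, and two points of $U_H^0$ are $G$-conjugate if and only if they are conjugate by an element of $N_G(H)$ (the element conjugating one to the other must normalize the common stabilizer $H$). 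This already gives the bottom row: $\mathcal{U}_H = U_H \sslash G = U_H^0 \sslash N_G(H) = U_H^0 \sslash N_H$, the last equality because $H$ acts trivially on $U_H^0$ by definition of $U_H^0$.

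Next I would construct the map $U_H^0 \times G/H \to U_H$ by $(\rho, gH) \mapsto g\rho g^{-1}$; this is well defined because $H = \Stab_G(\rho)$ fixes $\rho$, it is surjective because every orbit in $U_H$ meets $U_H^0$, and it is $G$-equivariant for the left $G$-action on $G/H$. To see that it realizes $\widetilde U_H$, I would check that the fibers are exactly the $N_H$-orbits for the action $nH\cdot(\rho,gH)=(n\rho n^{-1}, gn^{-1}H)$: if $g\rho g^{-1} = g'\rho' g'^{-1}$ with $\rho,\rho'\in U_H^0$, then $(g'^{-1}g)\rho (g'^{-1}g)^{-1} = \rho'$, so $n := g'^{-1}g$ satisfies $n\rho n^{-1} = \rho'$; in particular $n$ conjugates $\Stab_G(\rho)=H$ to $\Stab_G(\rho')=H$, hence $n\in N_G(H)$, and one reads off $(\rho',g'H) = nH\cdot(\rho,gH)$. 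Conversely the $N_H$-action clearly preserves fibers. A dimension/smoothness count — $\dim U_H^0 + \dim G/H$ matches $\dim U_H$, and the obvious projection $U_H^0\times G/H \to U_H^0$ is $N_H$-equivariant with the diagonal-type action — identifies $U_H^0\times G/H$ as the correct étale cover $\widetilde U_H$ appearing in Luna's theorem, with $\widetilde{\mathcal U}_H = (U_H^0\times G/H)\sslash G = U_H^0$ since $G$ acts freely-transitively on the $G/H$ factor. Finally, $U_H = \widetilde U_H \sslash N_H$ and $\mathcal U_H = \widetilde{\mathcal U}_H \sslash N_H = U_H^0\sslash N_H$ follow by taking $N_H$-quotients, and étaleness of the horizontal maps is exactly the content of the Luna slice theorem applied here.

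The step I expect to be the main obstacle is the passage from the genuinely local (étale-local) statement of Luna's slice theorem to the clean \emph{global} product description $\widetilde U_H = U_H^0\times G/H$: a priori the slice theorem only gives, near each closed orbit, an étale chart of the form (slice)$\times G/H$, and one must argue that the slices glue into the single global variety $U_H^0$ and that the $G/H$-bundle over $\mathcal U_H$ is globally trivial with the stated structure. The honest way to handle this is to note that $U_H^0 \hookrightarrow U_H$ is a locally closed $N_G(H)$-stable subvariety on which $H$ acts trivially, that the map $G\times_{N_G(H)} U_H^0 \to U_H$ induced by conjugation is $G$-equivariant, bijective on orbits, and an isomorphism onto $U_H$ (using that $U_H$ is exactly the locus of representations with stabilizer $G$-conjugate to $H$), and then to rewrite $G\times_{N_G(H)}U_H^0$ as $(U_H^0\times G/H)/N_H$; the remaining subtlety — checking the induced map on GIT quotients is étale rather than merely a bijective morphism — is precisely where one invokes the Luna slice theorem as a black box, so in the write-up I would state the product description as the concrete incarnation of Luna's theorem in this setting and verify the action formula and the identification of quotients by the orbit computation above, rather than reprove the slice theorem.
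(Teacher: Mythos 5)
Your proposal is correct and follows essentially the same route as the paper: the explicit map $(\rho,gH)\mapsto g\rho g^{-1}$, the observation that two points of $U_H^0$ in the same $G$-orbit are conjugate by an element that must normalize $H$ (so the fibers are exactly $N_H$-orbits for the stated action), and the identification of the quotients. The extra material on reductive stabilizers and on upgrading the bijective morphism to an \'etale one goes beyond what the paper's proof actually establishes (the proposition itself only asserts the commutative diagram and the identifications), but it does not change the core argument.
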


\begin{proof}
%Moreover, let $N_H := N_G(H)/H$ be the Weyl group of $H$ in $G$, where $N_G(H)$ is the normalizer of $H$ in $G$. \color{red} Javier :diría que en la definición va en el cociente $C_G(H)$, el centralizador. En el caso en que $H$ es un toro maximal coincide con $H$, pero no en otros casos (ver ejemplo de $\sldos$). \color{black}. 
We have a natural morphism
$$
	U_H^0 \times G \to U_H, \qquad (\rho, g) \mapsto g \rho g^{-1}.
$$
Since $H$ is exactly the stabilizer of the elements of $U_H^0$, this map descends to a morphism $U_H^0 \times G/H \to U_H$. Furthermore, a direct calculation shows that two elements $(\rho, gH)$ and $(\rho', g'H)$ of $U_H^0 \times G/H$ have the same image under this map if and only if there exists $n \in N_G(H)$ such that $(n \rho n^{-1}, gn^{-1}H)= (\rho', g'H)$. Indeed, if $g \rho g^{-1} = g' \rho' (g')^{-1}$, this implies that $\rho' = n \rho n^{-1}$ for $n = (g')^{-1}g$ and thus, since the stabilizer of $\rho'$ is exactly $H$, this means that $h\rho'h^{-1} = h n \rho n^{-1} h^{-1} = n\rho n^{-1}$ for all $h \in H$ or, equivalently $n^{-1}h n$ stabilizes $\rho$ for all $h$. But the stabilizer of $\rho$ is also $H$, so $n^{-1}h n \in H$ for all $h \in H$, meaning that $n$ must lie in the normalizer $N_G(H)$.

The so-defined action $n \cdot (\rho, gH) = (n \rho n^{-1}, gn^{-1}H)$ of $n \in N_G(H)$ on $(\rho, gH) \in U_H^0 \times G/H$ is well-defined precisely since $n \in N_G(H)$ and is trivial exactly on $H$. Hence we have an induced action of $N_H = N_G(H)/H$.
% $H \triangleleft N_G(H)$ is trivial, the result follows. \color{red} Javier: Diría que hace falta aquí de nuevo $C_G(H)$, que no actúa trivialmente, pero mantiene en $U_H^0$. \color{black}
%Hence, if we define the action of $N_H$ 
%on $U_H^0 \times G/H$ by $n \cdot (\rho, gH) = (n \rho n^{-1}, ngn^{-1}H)$.
We get the map
\begin{equation}\label{eq:orbit-space-rep}
	\widetilde{U}_H := U_H^0 \times G/H \to U_H=\left(U_H^0 \times G/H \right) \sslash N_H.
\end{equation}

Taking the quotient by $G$, we get a map at the level of quotients
\begin{equation}\label{eq:orbit-space-char}
	\widetilde{\mathcal{U}}_H := (U_H^0 \times G/H )\sslash G  \to \mathcal{U}_H = U_H \sslash G= U_H^0 \sslash N_H,
\end{equation}
as the $G$-action is given by $g'\cdot (\rho, gH) =  ( \rho , g'gH )$.
\end{proof}

%\color{red}Es $N_H = N_G(H) / H$ y no $N_G(H)/C_G(H)$, verdad? \color{black}

\begin{ex}
Let $G = \GL_r( \CC)$ and $H = \CC^* \Id$, so that $N_H = \GL_r( \CC)/\CC^* = \PGL_r(\CC)$. Schur's lemma implies that the set of irreducible representations $\R(n,m)^* \subset \R(n,m)$ is contained in $U_H$, so the irreducible characters $\X(n,m)^* \subset \X(n,m)$ are also contained in $\mathcal{U}_H = U_H \sslash G$. Furthermore, in this case $U_H^0 = U_H$, hence the commutative diagram of Proposition \ref{prop:structure-character-var} boils down to
$$ 
\xymatrix{
\widetilde{U}_H =  U_H \times \PGL_r(\CC) \ar[d] \ar[r] &U_H \supset \R(n,m)^* \ar[d] \\
\widetilde{\mathcal{U}}_H = U_H \ar[r] & \mathcal{U}_H  = U_H \sslash \PGL_r( \CC) \supset \X(n,m)^*
}
$$
where the top and left quotient maps reduce to projections onto the first factor.
\end{ex}

\begin{rem}
The definition of the group $N_H = N_G(H) / H$ resembles the definition of the Weyl group of $G$ and, indeed, this is the case when $H = T$ is a maximal torus of $G$. In this sense, the previous result can be seen as a generalization of the fact that there is a commutative diagram
$$ 
\xymatrix{
T \times G/T \ar[d] \ar[r] &G \ar[d] \\
T \ar[r] & G \sslash G = T \sslash N_T. 
}
$$
\end{rem}
 
Now, let us come back to the torus link representation variety $\R^d(n,m)$. Given subgroups $H_1, H_2 < G$ such that $H_1 \triangleleft H_2$, let us consider the strata
$$
V_{H_1, H_2} = \left\{(A, B) \in \R(n,m)\,|\, (\Stab(A,B), \Stab A^n) \sim (H_1, H_2)\right\},
$$
$$
W_{H_1, H_2} = \left\{(A, B, F_1, \ldots, F_{d-1}) \in \R^d(n,m)\,|\, (\Stab(A,B), \Stab A^n) \sim (H_1, H_2)\right\}.
$$
Here, $(H_1, H_2) \sim (H'_1, H'_2)$ means that $H_1$ and $H'_1$, and $H_2$ and $H_2'$ are simultaneously conjugated subgroups. Additionally, in analogy with the previous case, we set
$$
V_{H_1, H_2}^0 = \left\{(A, B) \in \R(n,m)\,|\, \Stab(A,B) = H_1, \, \Stab A^n = H_2\right\},
$$
$$
W_{H_1, H_2}^0 = \left\{(A, B, F_1, \ldots, F_{d-1}) \in \R^d(n,m)\,|\, \Stab(A,B) = H_1, \, \Stab A^n = H_2\right\}.
$$
Observe that, by definition, we have $W_{H_1, H_2}^0 = V_{H_1, H_2}^0 \times H_2^{d-1}$. 

Now, the isomorphism analogous to (\ref{eq:orbit-space-rep}) becomes something slightly more complicated, and its description is captured in the following lemma.

\begin{lem}
There is an isomorphism
$$
	W_{H_1, H_2} \cong \left(\left(W_{H_1, H_2}^0 \times G\right) \sslash H_1 \right) \sslash N_{H_1, H_2},
$$
where $H_1$ acts on $W_{H_1, H_2}^0 \times G$ by $h \cdot (A, B, F_i, g) = (A, B, h F_ih^{-1}, gh^{-1})$,
and $N_{H_1, H_2} = (N_{G}(H_1) \cap N_{G}(H_2)) / H_1$. % is the Weyl group of $H_1$ in $G$, and 
\end{lem}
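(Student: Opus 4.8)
The plan is to follow the proof of Proposition~\ref{prop:structure-character-var}, now carrying along the two nested stabilizer conditions recorded by the pair $(H_1, H_2)$. First I would introduce the conjugation morphism
$$
\Psi\colon W_{H_1, H_2}^0 \times G \longrightarrow W_{H_1, H_2}, \qquad \Psi(A, B, F_1, \ldots, F_{d-1}, g) = (gAg^{-1}, gBg^{-1}, gF_1g^{-1}, \ldots, gF_{d-1}g^{-1}),
$$
which is surjective: by the very definition of the stratum, every point of $W_{H_1, H_2}$ has its pair of stabilizers simultaneously conjugate to $(H_1, H_2)$, so a suitable conjugation carries it into $W_{H_1, H_2}^0$. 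Next, using that an element $h$ of $H_1 = \Stab(A, B)$ fixes both $A$ and $B$ and, since $H_1 \subseteq H_2 = \Stab A^n$, commutes with $A^n$, a direct computation shows that $\Psi$ is invariant under the action $h \cdot (A, B, F_i, g) = (A, B, hF_ih^{-1}, gh^{-1})$; in particular this action preserves $W_{H_1, H_2}^0 \times G$, and $\Psi$ descends to a morphism $\big(W_{H_1, H_2}^0 \times G\big) \sslash H_1 \to W_{H_1, H_2}$.

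The heart of the argument is the identification of the fibers of $\Psi$. If $\Psi(A, B, F_i, g) = \Psi(A', B', F_i', g')$ with both arguments in $W_{H_1, H_2}^0 \times G$, put $n = (g')^{-1}g$, so that $(A', B', F_i') = (nAn^{-1}, nBn^{-1}, nF_in^{-1})$ and $g' = gn^{-1}$. Comparing stabilizers, and using that both $(A, B)$ and $(A', B')$ lie in $V_{H_1, H_2}^0$, one gets $nH_1n^{-1} = \Stab(A', B') = H_1$ and $nH_2n^{-1} = \Stab((A')^n) = H_2$, whence $n$ lies in $\widehat{N} := N_G(H_1) \cap N_G(H_2)$; conversely, any $n \in \widehat{N}$, acting by $n \cdot (A, B, F_i, g) = (nAn^{-1}, nBn^{-1}, nF_in^{-1}, gn^{-1})$, produces such a coincidence and preserves $W_{H_1, H_2}^0 \times G$. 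I would also record the elementary facts that $H_1 \subseteq H_2$ forces $H_1 \subseteq \widehat{N}$ and that $\widehat{N} \subseteq N_G(H_1)$ gives $H_1 \triangleleft \widehat{N}$, so that $N_{H_1, H_2} = \widehat{N}/H_1$ is a well-defined group; moreover the $\widehat{N}$-action restricts on $H_1$ to the one above, and its orbits are exactly the fibers of $\Psi$.

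It then remains to assemble these pieces. The $\widehat{N}$-action commutes with the free $G$-action on $W_{H_1, H_2}^0 \times G$ by left translation on the last coordinate, through which $\Psi$ is $G$-equivariant; combined with the \'etale-local structure of the strata from the Luna slice theorem and the normality of the GIT quotients involved, this shows that $\Psi$ realizes $W_{H_1, H_2}$ as the quotient $\big(W_{H_1, H_2}^0 \times G\big) \sslash \widehat{N}$. Applying the standard identity $\big(X \sslash H_1\big) \sslash (\widehat{N}/H_1) \cong X \sslash \widehat{N}$ with $X = W_{H_1, H_2}^0 \times G$ then gives the claimed isomorphism $W_{H_1, H_2} \cong \big(\big(W_{H_1, H_2}^0 \times G\big) \sslash H_1\big) \sslash N_{H_1, H_2}$. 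The main obstacle, and the step demanding the most care, is exactly this two-stage fiber analysis together with the accompanying reductivity bookkeeping: one must check that the connecting element $n$ normalizes not only $H_1$ but also $H_2$, that quotienting first by $H_1$ (which only moves the $F_i$ and the $G$-coordinate) and then by $N_{H_1, H_2}$ accounts for precisely the remaining identifications, and that $H_1$, $H_2$ and $\widehat{N}$ are reductive, so that all the GIT quotients and the iterated-quotient identity are legitimate.
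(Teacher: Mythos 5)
Your proposal is correct and follows essentially the same route as the paper: the paper's (much terser) proof introduces the same conjugation map $W_{H_1,H_2}^0\times G\to W_{H_1,H_2}$, notes its $H_1$-invariance, and asserts that the failure of injectivity is exactly the $N_{H_1,H_2}$-action, which is precisely the fiber analysis you carry out in detail (including the key point that the connecting element normalizes both $H_1$ and $H_2$). Your write-up simply makes explicit the steps the paper leaves implicit.
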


\begin{proof}
We start with the map 
$$
	W_{H_1, H_2}^0 \times G \to W_{H_1, H_2}, \qquad (A, B, F_i, g) \mapsto (gAg^{-1}, gBg^{-1}, gF_ig^{-1}).
$$
With the definition of the $H_1$-action given above, this map is $H_1$-invariant, so it descends to a morphism 
$$
	\left(W_{H_1, H_2}^0 \times G\right) \sslash H_1 \to W_{H_1, H_2}.
$$
The failure of the injectivity of this morphism is precisely given by the action of the %Weyl 
group $N_{H_1, H_2}$ so, taking the quotient by this group, we get the desired isomorphism.
\end{proof}

Now, using the fact that $W_{H_1, H_2}^0 = V_{H_1, H_2}^0 \times H_2^{d-1}$ and the action of $H_1$ is trivial on $V_{H_1, H_2}^0$, we get that
\begin{equation}\label{eq:iso-w}
	W_{H_1, H_2} \cong \left(\left(W_{H_1, H_2}^0 \times G\right) \sslash H_1 \right) \sslash N_{H_1, H_2} = \left(V_{H_1, H_2}^0 \times \left(H_2^{d-1} \times G\right) \sslash H_1 \right) \sslash N_{H_1, H_2}.
\end{equation}

Regarding the quotient by $G$ on $W_{H_1, H_2}$ by conjugation, we note that under the isomorphism 
(\ref{eq:iso-w}) the group $G$ acts only on the factor $G$ on the left. Hence, we get an isomorphism
$$
	\mathcal{W}_{H_1, H_2} := W_{H_1, H_2} \sslash G \cong \left(V_{H_1, H_2}^0 \times H_2^{d-1} \sslash H_1 \right) \sslash N_{H_1, H_2}.
$$
Summarizing these observations, and setting $\widetilde{W}_{H_1, H_2} = V_{H_1, H_2}^0 \times \left(H_2^{d-1} \times G\right) \sslash H_1$ and $\widetilde{\mathcal{W}}_{H_1, H_1} = V_{H_1, H_2}^0 \times H_2^{d-1} \sslash H_1$, we get the following result.

\begin{thm}\label{thm:local-model}
Let $H_2$ be a subgroup of $G$ and let $H_1 \triangleleft H_2$ be a normal subgroup for which the stratum $W_{H_1, H_2}$ is non-empty. Set $N_{H_1, H_2} = (N_{G}(H_1) \cap N_G(H_2)) / H_1$. We get a commutative diagram
$$
\xymatrix{
\widetilde{W}_{H_1, H_2} \ar[d] \ar[r] &W_{H_1, H_2} \ar[d]  \subset \R^d(n,m)\\
\widetilde{\mathcal{W}}_{H_1, H_2} \ar[r] & \mathcal{W}_{H_1, H_2} \subset \X^d(n,m) 
}
$$
where the right-most arrow is the quotient by the adjoint action of $G$, and:
\begin{enumerate}
	\item We have an identification $\widetilde{W}_{H_1, H_2} = V_{H_1, H_2}^0 \times \left(H_2^{d-1} \times G\right) \sslash H_1$, with action of $H_1$ on $H_2^{d-1} \times G$ given by $ h \cdot (F_i, g) = (hF_ih^{-1}, gh^{-1})$.
	\item With this description, the map $\widetilde{W}_{H_1, H_2} \to W_{H_1, H_2}$ is $$(A,B, [F_i, g]_{H_1} ) \mapsto (gAg^{-1}, gBg^{-1}, gF_ig^{-1}).$$
This map is equivalent to the quotient map under the action of the group $N_{H_1, H_2} = (N_G(H_1) \cap N_G(H_2)) / H_1$ on 
$\widetilde{W}_{H_1, H_2}$ by $n \cdot (A, B, [F_i, g]_{H_1}) = (n A n^{-1}, n B n^{-1}, [nF_i n^{-1}, gn^{-1}]_{H_1})$.
	\item The space $\widetilde{\mathcal{W}}_{H_1, H_2}=V_{H_1, H_2}^0 \times H_2^{d-1} \sslash H_1$. 
	The vertical map $\widetilde{W}_{H_1, H_2} \to \widetilde{\mathcal{W}}_{H_1, H_2}$ is the GIT quotient map for the induced $G$-action.
	\item The bottom arrow is the quotient map for the induced action of $N_{H_1, H_2}$ on $\widetilde{\mathcal{W}}_{H_1, H_2}=V_{H_1, H_2}^0 \times H_2^{d-1} \sslash H_1$.
\end{enumerate}
\end{thm}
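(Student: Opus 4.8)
The plan is to assemble the statement directly from the Lemma proved above, the product identity $W_{H_1,H_2}^0 = V_{H_1,H_2}^0 \times H_2^{d-1}$, and the chain of isomorphisms \eqref{eq:iso-w}; the four numbered items are then obtained by carefully tracking the three mutually commuting group actions (by $H_1$, by $N_{H_1,H_2}$, and by the adjoint $G$) through the construction. So I would open the proof by recalling from \eqref{eq:iso-w} the isomorphism $W_{H_1,H_2}\cong\big((W_{H_1,H_2}^0\times G)\sslash H_1\big)\sslash N_{H_1,H_2}$ and the explicit maps from the Lemma, and then specialize every factor.

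First I would justify $W_{H_1,H_2}^0 = V_{H_1,H_2}^0\times H_2^{d-1}$: in the presentation \eqref{eqn:toruslinkgroup} the only relations involving $f_k$ are $a^n f_k = f_k b^m$, which on a representation with $A^n=B^m$ say exactly that $F_k\in\Stab_G(A^n)$, independently for each $k$; imposing $\Stab A^n = H_2$ forces each $F_k\in H_2$ with no further constraint, while $\Stab(A,B)=H_1$ is a condition on the pair $(A,B)$ alone. Since every $h\in H_1\le\Stab(A,B)$ fixes $A$ and $B$, the $H_1$-action of the Lemma on $W_{H_1,H_2}^0\times G = V_{H_1,H_2}^0\times H_2^{d-1}\times G$ is trivial on the factor $V_{H_1,H_2}^0$ and is given by $h\cdot(F_i,g)=(hF_ih^{-1},gh^{-1})$ on the remaining factors. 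Because the action on $V_{H_1,H_2}^0$ is trivial, the GIT quotient splits as $V_{H_1,H_2}^0\times\big((H_2^{d-1}\times G)\sslash H_1\big)$ at the level of coordinate rings, which is the identification $\widetilde{W}_{H_1,H_2}=V_{H_1,H_2}^0\times(H_2^{d-1}\times G)\sslash H_1$ of item (1). Substituting this into the Lemma, item (2) is just the composite of the quotient map $W_{H_1,H_2}^0\times G\to(W_{H_1,H_2}^0\times G)\sslash H_1$ with the map $(A,B,F_i,g)\mapsto(gAg^{-1},gBg^{-1},gF_ig^{-1})$ of the Lemma, and the already-established fact that the fibres of the latter are precisely the $N_{H_1,H_2}$-orbits.

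For items (3) and (4) I would pass to the GIT quotient by the adjoint $G$-action. Conjugating the image $(gAg^{-1},gBg^{-1},gF_ig^{-1})$ by $g'\in G$ gives $(g'gA(g'g)^{-1},\dots)$, so under the identification of item (1) the $G$-action is by left translation $g\mapsto g'g$ on the $G$-factor and trivial on $V_{H_1,H_2}^0$ and on $H_2^{d-1}$; it commutes with both the $H_1$-action (right translation by $h^{-1}$ on $G$, conjugation on $H_2^{d-1}$) and the $N_{H_1,H_2}$-action. Hence on $(H_2^{d-1}\times G)\sslash H_1$ the quotient by $G$ may be computed by first dividing out the free $G$-factor, which leaves $H_2^{d-1}$, and then dividing by the residual conjugation action of $H_1$; this yields $\widetilde{\mathcal W}_{H_1,H_2}=V_{H_1,H_2}^0\times H_2^{d-1}\sslash H_1$, which is item (3), with $\widetilde{W}_{H_1,H_2}\to\widetilde{\mathcal W}_{H_1,H_2}$ the induced GIT quotient map. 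Commutativity of the resulting square, together with the fact that the $N_{H_1,H_2}$-action commutes with $G$, then shows $\mathcal W_{H_1,H_2}=W_{H_1,H_2}\sslash G\cong\widetilde{\mathcal W}_{H_1,H_2}\sslash N_{H_1,H_2}$, giving item (4).

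The main technical point to be careful about is the order in which the GIT quotients by $H_1$, by $G$ and by $N_{H_1,H_2}$ are performed. One must check on the relevant loci that the orbits in question are closed, so that these are geometric quotients where needed and the vertical and horizontal maps have the étale/quotient nature claimed (exactly as in Proposition \ref{prop:structure-character-var}), and that "quotienting a product by a group acting on a single factor" behaves as expected, i.e. $\big((H_2^{d-1}\times G)\sslash H_1\big)\sslash G\cong(H_2^{d-1}\times G)\sslash(H_1\times G)\cong H_2^{d-1}\sslash H_1$ using that $G$ acts freely by translation on its own factor. These identifications are standard, but spelling out the commuting-quotient bookkeeping carefully is where the real content of the proof lies.
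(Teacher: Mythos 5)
Your proposal is correct and follows essentially the same route as the paper: the theorem there is presented as a summary of the preceding Lemma together with the observations that $W_{H_1,H_2}^0 = V_{H_1,H_2}^0\times H_2^{d-1}$, that $H_1$ acts trivially on $V_{H_1,H_2}^0$, and that under the identification of item (1) the adjoint $G$-action only translates the $G$-factor, so quotienting by $G$ leaves $V_{H_1,H_2}^0\times H_2^{d-1}\sslash H_1$. Your extra bookkeeping on the commuting $H_1$-, $G$- and $N_{H_1,H_2}$-actions just makes explicit what the paper asserts in one line.
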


%\begin{rem} {\color{red} VICENTE: Yo creo que esto es falso. Para que lo queremos? ÁNGEL: Intentaba ser una justificación de que la acción está bien definidal. Pero quizá podemos eliminar el remark directamente, porque creo que nunca lo usamos explícitamente. \color{black}}
%Let us define the action of $N_{H_1, H_2}$ on $V_{H_1, H_2}^0 \times H_2^{d-1} \times G$ by
%$$
%	n \cdot (A, B, F_i, g) = (n A n^{-1}, n B n^{-1}, nF_i n^{-1}, gn^{-1}),
%$$
%and consider the previous action of $H_1$ on the same space. Since the elements of $N_{H_1, H_2}$ are (classes of) elements of the normalizer of $H_1$, for any $n \in N_{H_1, H_2}$ and $h \in H_1$ we have that $n \cdot (h \cdot -) = h' \cdot (n \cdot -)$ for a certain $h' \in H_1$. In particular, this implies that the action of $N_{H_1, H_2}$ on $\widetilde{\mathcal{W}}_{H_1, H_2}$ is well defined and that the quotients $\left(V_{H_1, H_2}^0 \times (H_2^{d-1} \times G) \sslash H_1 \right) \sslash N_{H_1, H_2}$ and $\left(\left(V_{H_1, H_2}^0 \times (H_2^{d-1} \times G \right) \sslash N_{H_1, H_2} \right) \sslash H_1$ are isomorphic.
%\end{rem}

\begin{rem}
When computing the spaces $W_{H_1, H_2} \subset \R^d(n,m)$, it will be convenient to analyze the strata $V_{H_1, H_2} \subset \R(n,m)$ first, and their respective quotients. In the following, the corresponding quotients by $G$ will also be denoted by $\mathcal{V}_{H_1, H_2} = V_{H_1, H_2} \sslash G$ and $\widetilde{\mathcal{V}}_{H_1, H_2} = \widetilde{V}_{H_1, H_2} \sslash G$. Notice that, by Proposition \ref{prop:structure-character-var}, we have a natural identification $\widetilde{\mathcal{V}}_{H_1, H_2} \cong V_{H_1, H_2}^0$.
\end{rem}

\begin{cor}\label{cor:quotient-connected}
Suppose that $N_{H_1, H_2}$ is a connected algebraic group. Then we have an equality of $E$-polynomials
$$
	e\left({\mathcal{W}}_{H_1, H_2}\right) = e\left(\mathcal{V}_{H_1, H_2}\right) e\left(H_2^{d-1} \sslash H_1\right).
$$
\begin{proof}
Now, the action of $N_{H_1, H_2}$ on $ \widetilde{\mathcal{W}}_{H_1, H_2} = V^0_{H_1, H_2} \times H_2^{d-1} \sslash H_1$ is free, so the map $\widetilde{\mathcal{W}}_{H_1, H_2} \to {\mathcal{W}}_{H_1, H_2}$ is a principal $N_{H_1, H_2}$-bundle (in the analytic topology), and analogously for $\mathcal{V}_{H_1, H_2}^0 = \widetilde{\mathcal{V}}_{H_1, H_2} \to \mathcal{V}_{H_1, H_2}$. Then, by \cite[Remark 2.5]{LMN}, we have that $e\left({\mathcal{W}}_{H_1, H_2}\right) = e(\widetilde{\mathcal{W}}_{H_1, H_2})/e(N_{H_1, H_2})$ and $e({\mathcal{V}}_{H_1, H_2}) = e\left({V}_{H_1, H_2}^0\right)/e(N_{H_1, H_2})$, and thus
$$
	e\left({\mathcal{W}}_{H_1, H_2}\right) = \frac{e\left(V^0_{H_1, H_2}\right)}{e(N_{H_1, H_2})} e\left(H_2^{d-1} \sslash H_1\right) = e\left(\mathcal{V}_{H_1, H_2}\right) e\left(H_2^{d-1} \sslash H_1\right).
$$
\end{proof}

\end{cor}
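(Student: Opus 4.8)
The plan is to read the identity directly off the local model of Theorem~\ref{thm:local-model} and reduce it to the multiplicativity of $E$-polynomials for principal bundles with connected structure group. Concretely, I would produce two presentations of the relevant quotients as free $N_{H_1,H_2}$-quotients — one at the level of the torus link, one at the level of the underlying torus knot — apply the division formula for $E$-polynomials to each, and compare.

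First I would set up the two free quotient presentations. By item~(4) of Theorem~\ref{thm:local-model}, the variety $\mathcal{W}_{H_1,H_2}$ is the quotient of $\widetilde{\mathcal{W}}_{H_1,H_2} = V_{H_1,H_2}^0 \times H_2^{d-1}\sslash H_1$ under the $N_{H_1,H_2}$-action $n\cdot (A,B,[F_i]_{H_1}) = (nAn^{-1},nBn^{-1},[nF_in^{-1}]_{H_1})$. The key point is that this action is free: a fixed point would force $nAn^{-1}=A$ and $nBn^{-1}=B$, i.e.\ $n\in\Stab(A,B)=H_1$ since $(A,B)\in V_{H_1,H_2}^0$, and the coset of such an $n$ is the neutral element of $N_{H_1,H_2} = (N_G(H_1)\cap N_G(H_2))/H_1$. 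On the torus-knot side, Proposition~\ref{prop:structure-character-var} (together with the remark preceding this corollary identifying $\widetilde{\mathcal{V}}_{H_1,H_2}\cong V_{H_1,H_2}^0$) presents $\mathcal{V}_{H_1,H_2}$ as the quotient of $V_{H_1,H_2}^0$ by the same group $N_{H_1,H_2}$ acting by simultaneous conjugation, and the identical argument shows this action is free as well.

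Next, since $N_{H_1,H_2}$ is connected by hypothesis, both free quotient maps $\widetilde{\mathcal{W}}_{H_1,H_2}\to\mathcal{W}_{H_1,H_2}$ and $V_{H_1,H_2}^0\to\mathcal{V}_{H_1,H_2}$ are principal $N_{H_1,H_2}$-bundles in the analytic topology with trivial monodromy on the cohomology of the fiber. By the multiplicativity property of $E$-polynomials recalled in Section~\ref{sec:introduction}, in the divisible form of \cite[Remark~2.5]{LMN}, this gives $e(\mathcal{W}_{H_1,H_2}) = e(\widetilde{\mathcal{W}}_{H_1,H_2})/e(N_{H_1,H_2})$ and $e(\mathcal{V}_{H_1,H_2}) = e(V_{H_1,H_2}^0)/e(N_{H_1,H_2})$. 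Finally, because $H_1$ acts trivially on the factor $V_{H_1,H_2}^0$, the GIT quotient splits as $V_{H_1,H_2}^0 \times H_2^{d-1}\sslash H_1 = V_{H_1,H_2}^0\times (H_2^{d-1}\sslash H_1)$, whence $e(\widetilde{\mathcal{W}}_{H_1,H_2}) = e(V_{H_1,H_2}^0)\,e(H_2^{d-1}\sslash H_1)$. Substituting and regrouping yields
$$
e(\mathcal{W}_{H_1,H_2}) = \frac{e(V_{H_1,H_2}^0)}{e(N_{H_1,H_2})}\,e(H_2^{d-1}\sslash H_1) = e(\mathcal{V}_{H_1,H_2})\,e(H_2^{d-1}\sslash H_1).
$$

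I expect the only genuinely delicate point to be the passage from a free algebraic group action to an analytically locally trivial principal bundle: freeness ensures that orbits are closed of maximal dimension so that the GIT quotient coincides with the geometric quotient and Luna-type slices trivialize the bundle, while connectedness of $N_{H_1,H_2}$ is precisely the hypothesis that makes the monodromy on $H^*_c$ of the fiber trivial, so that the $E$-polynomial is genuinely multiplicative and the division formula applies. Everything else is bookkeeping with the identifications already supplied by Theorem~\ref{thm:local-model} and Proposition~\ref{prop:structure-character-var}.
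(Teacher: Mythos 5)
Your proposal is correct and follows essentially the same route as the paper: freeness of the $N_{H_1,H_2}$-action (forced on the first factor by $\Stab(A,B)=H_1$), the principal-bundle division formula of \cite[Remark 2.5]{LMN} applied to both $\widetilde{\mathcal{W}}_{H_1,H_2}\to\mathcal{W}_{H_1,H_2}$ and $V^0_{H_1,H_2}\to\mathcal{V}_{H_1,H_2}$, and the product splitting of $e(\widetilde{\mathcal{W}}_{H_1,H_2})$. Your added justifications of the freeness and of the bundle/monodromy subtleties are sound elaborations of what the paper leaves implicit.
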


The previous result is particularly interesting in the following case. Let $G =\GL_r(\CC)$ and denote by $\R(n,m)^*$ and $\R^d(n,m)^*$ the subvarieties of irreducible representations of $\R(n,m)$ and $\R^d(n,m)$, respectively. Recall the projection map of (\ref{eq:proj-knot-link})
$$
	\pi: \R^d(n,m) \to \R(n,m), \qquad \pi(A,B,F_i) = (A,B).
$$
We trivially have that $\pi^{-1}(\R(n,m)^*)  \subset \R^d(n,m)^*$. Furthermore, by Schur's lemma, for any $\rho = (A,B) \in \R(n,m)^*$, we have that $A^n = B^m$ is a multiple of the identity and the stabilizer of $\rho$ are multiples of the identity. In particular, this means that $\R(n,m)^* \subset V_{\CC^{\ast}\Id, G}$. Hence, in this case $H_1 = \CC^* \Id$ and $H_2 = G$, so $N_{H_1, H_2} = \GL_r( \CC) / \CC^* \Id = \PGL_r( \CC)$, which is a connected group and thus
$$
	e(\cW_{\CC^*\Id, G}) = e(\mathcal{V}_{\CC^*\Id, G}) e(G)^{d-1},
$$
where we have used that $G^{d-1} \sslash \CC^* \Id = G^{d-1}$ since $\CC^*\Id$ acts trivially on $G^{d-1}$ by conjugation.
Furthermore, when $G=\SL_r(\CC)$, we have $\R(n,m)^* \subset V_{\bm{\mu}_r \Id, G}$, where $\bm{\mu}_r$ denotes the set of $r$-th roots of unity and the rest of the argument works verbatim. We summarize these observations in the following.

\begin{cor} \label{cor:irreduciblelocus}
Let $G=\GL_r(\CC)$ or $\SL_r(\CC)$. Over the irreducible locus $\X^d(n,m)^*$ of the character variety, we have an equality
$$
	e\left(\pi^{-1}(\X(n,m)^*)\right) = e(\X(n,m)^*)e(G)^{d-1}.
$$
\end{cor}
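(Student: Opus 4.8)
The plan is to reduce the corollary to the already-established Corollary \ref{cor:quotient-connected} by identifying $\pi^{-1}(\X(n,m)^*)$ with a single stratum $\mathcal{W}_{H_1, H_2}$ for an appropriate choice of $H_1 \triangleleft H_2$, and checking that the associated group $N_{H_1, H_2}$ is connected. First I would recall that, by Schur's lemma, every irreducible representation $\rho = (A, B) \in \R(n,m)^*$ has the property that $A^n = B^m$ is scalar and that the stabilizer of $\rho$ under the adjoint action consists precisely of scalars. For $G = \GL_r(\CC)$ this means $\Stab(A, B) = \CC^*\Id$ and $\Stab(A^n) = G$ (since $A^n$ is central), so $\R(n,m)^* \subset V_{\CC^*\Id, G}^0$; for $G = \SL_r(\CC)$ the same argument gives $\Stab(A,B) = \bm{\mu}_r\Id$ and $\Stab(A^n) = G$, so $\R(n,m)^* \subset V_{\bm{\mu}_r\Id, G}^0$. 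In either case set $H_1$ to be the relevant group of scalars and $H_2 = G$.

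Next I would verify that $\pi^{-1}(\R(n,m)^*) = W_{H_1, H_2}^0$ and hence $\pi^{-1}(\X(n,m)^*) = \mathcal{W}_{H_1, H_2}$, at least up to the trivial observation that the irreducibility condition is open and cuts out exactly this stratum. The key computational input is that $W_{H_1, H_2}^0 = V_{H_1, H_2}^0 \times H_2^{d-1}$ by the identity recorded just before Theorem \ref{thm:local-model}, and that $H_2^{d-1} = G^{d-1}$ carries the trivial $H_1$-action because $H_1$ consists of scalars; thus $G^{d-1} \sslash H_1 = G^{d-1}$ and $e(H_2^{d-1}\sslash H_1) = e(G)^{d-1}$. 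Then I would compute $N_{H_1, H_2} = (N_G(H_1) \cap N_G(H_2))/H_1$: since $H_1$ is central in $G$ it is normalized by all of $G$, and $H_2 = G$ is normalized by all of $G$, so $N_G(H_1) \cap N_G(H_2) = G$ and $N_{H_1, H_2} = G/H_1$, which equals $\PGL_r(\CC)$ when $G = \GL_r(\CC)$ and $\SL_r(\CC)/\bm{\mu}_r\Id = \PGL_r(\CC)$ when $G = \SL_r(\CC)$ — in both cases a connected algebraic group.

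Finally I would apply Corollary \ref{cor:quotient-connected} to the stratum $W_{H_1, H_2}$, obtaining
$$
	e(\mathcal{W}_{H_1, H_2}) = e(\mathcal{V}_{H_1, H_2})\, e(H_2^{d-1}\sslash H_1) = e(\mathcal{V}_{H_1, H_2})\, e(G)^{d-1},
$$
and then identify $\mathcal{V}_{H_1, H_2}$ with $\X(n,m)^*$: indeed $V_{H_1, H_2}$ is the set of representations of the torus knot group with stabilizer exactly the scalars (equivalently, the irreducible ones together with any others having full-scalar stabilizer, which for the knot group coincide with the irreducibles by Schur), so $\mathcal{V}_{H_1, H_2} = V_{H_1, H_2}\sslash G = \X(n,m)^*$. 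Combining, $e(\pi^{-1}(\X(n,m)^*)) = e(\mathcal{W}_{H_1, H_2}) = e(\X(n,m)^*)\, e(G)^{d-1}$, as claimed. The main obstacle I anticipate is purely bookkeeping: one must be careful that the locus cut out by "$\Stab(A,B)$ equals the scalars" genuinely coincides with the irreducible locus $\R(n,m)^*$ (for $\GL_r$ and $\SL_r$ this is the standard equivalence between irreducibility and scalar endomorphism algebra, but it needs to be invoked explicitly), and that $A^n$ being scalar forces $\Stab(A^n) = G$ exactly rather than something larger only in degenerate cases — both are clean once stated, so no serious difficulty is expected beyond assembling these pieces in the right order.
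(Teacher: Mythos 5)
Your proposal is correct and follows essentially the same route as the paper: Schur's lemma places $\R(n,m)^*$ inside the stratum $V_{H_1,G}$ with $H_1$ the scalars ($\CC^*\Id$ or $\bm{\mu}_r\Id$), $N_{H_1,G}=\PGL_r(\CC)$ is connected, the scalar action on $G^{d-1}$ is trivial, and Corollary \ref{cor:quotient-connected} gives the formula. The only (inessential) overstatement is the claim that the scalar-stabilizer locus \emph{equals} the irreducible locus --- non-split extensions can also have scalar endomorphisms --- but since the corollary is stated only over $\X(n,m)^*$ and the principal-bundle argument restricts to any invariant subset, this does not affect the conclusion.
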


%%%%%%%%%%%%%%%%%%%%%%%%%%%%%%%%%%%%%%%%%%%%%%%%
\section{$\sldos$-character varieties of torus links} \label{sec:sl2geodesc}
%%%%%%%%%%%%%%%%%%%%%%%%%%%%%%%%%%%%%%%%%%%%%%%%

Let us consider the case $G=\sldos$, that we abbreviate as $\SL_2$. 
We aim to describe the character variety:
$$
\X_{\SL_2}^d(n,m)=\lbrace(A,B,F_1,\ldots,F_{d-1})\in \SL_2^{d+1} \mid A^n=B^m, \: [A^n,F_i]=\Id \rbrace \sslash \SL_2.
$$
For simplicity, we shall drop the subscript from the notation and we will denote it simply by $\X^d(n,m)$. In this section, let us write
\begin{equation}\label{def:subgroupsSL2}
H_1 := \bm{\mu}_2 \Id =\lbrace \pm \Id \rbrace, \quad
H_2 := \left\lbrace \begin{pmatrix} \lambda & 0 \\ 0 & \lambda^{-1} \end{pmatrix} | \, \lambda \in \mathbb{C}^{\ast} \right\rbrace \cong \CC^{\ast},
\end{equation}
which are the stabilizers in $\sldos$ of $\GL_2(\CC)$ and $ \GL_1(\CC)\times\GL_1(\CC)$, respectively. The associated groups are $N_{H_1, H_1} = \PGL_2( \CC)$ and $N_{H_2, H_2} = (\CC^* \ltimes \ZZ_2)/\CC^* = \ZZ_2$, where the latter acts on $H_2$ by permutation of eigenvalues.

In the case $d=1$, the character variety $\X^1(n,m) = \X(n,m)$ for torus knots was studied in \cite{mun:2009} for coprime $n,m$. It can be geometrically described as follows:
\begin{itemize}
\item $\X(n,m)^*$, the stratum of irreducible representations, is a collection of $(m-1)(n-1)/2$ components isomorphic to $\CC-\lbrace 0,1\rbrace$, parametrized by roots of unity. The closure of each component intersects the reducible stratum at two points. By Schur's lemma, these points have $H_1$ as stabilizer and, for any $(A,B) \in \X(n,m)^*$, we have $A^n = \pm \Id$. Indeed, $\X(n,m)^* = \cU_{H_1} = \cV_{H_1,\SL_2}$.
\item $\X(n,m)^{\textrm{TR}}$, the stratum of (totally) reducible representations. It is isomorphic to $\CC^{\ast}/ \ZZ_2$, with action $t \mapsto t^{-1}, \: t\in \CC^{\ast}$. The space is isomorphic to $\CC$, parametrized by $s=t+t^{-1}$. For the stabilizer, there are two options:
\begin{itemize}
	\item If either $A,B\not\in H_1$, then these points have stabilizer $H_2$ and $\cU_{H_2} = \X(n,m)^{\textrm{TR}} - \{(\pm \Id, \pm \Id)\}$. Note
	that the condition $A^n=B^m$, means that if $n,m$ are both odd, then we remove $S=\{(\Id,\Id),(-\Id,-\Id)\}$, and if $n$ is even and $m$ is
	odd then we remove $S=\{(\Id,\Id), (-\Id,\Id)\}$. % (and in reverse order if $n$ odd and $m$ even). 
	In all cases, $\cU_{H_2} = \X(n,m)^{\textrm{TR}} - S$, with $S$ being two points.

	The stabilizer of $A^n$ for a representation varies depending on whether $A^n = \pm \Id$ or not. 
\begin{itemize}
	\item We have $A^n = \pm \Id\in H_1$ if and only if $A$ is conjugated to $\diag(\epsilon, \epsilon^{-1})$, with $\epsilon \in \bm{\mu}_{2n}$ a $2n$-th root of unity. Hence, we get that $V_{H_2, \SL_2}$ is the collection of $(A, B) \in \R(n,m) - \R(n,m)^*$ such that $A \sim \diag(\epsilon, \epsilon^{-1})$  and $B \sim \diag(\varepsilon, \varepsilon^{-1})$ for $(\epsilon , \varepsilon) \in \bm{\mu}_{2n} \times \bm{\mu}_{2m} - \{(\pm 1, \pm 1)\}$.
	The condition $\epsilon^n=\varepsilon^m$ translates to $\epsilon=\upsilon^m$, $\varepsilon=\upsilon^n$, 
	with $\upsilon \in \bm{\mu}_{2mn}-\{\pm 1\}$. Hence 
	$\widetilde{\cV}_{H_2, \SL_2} = \bm{\mu}_{2mn} - \{\pm 1\}$ and $\cV_{H_2, \SL_2} = \widetilde{\cV}_{H_2, \SL_2}/\ZZ_2 $ 
	with the action $\upsilon \sim \upsilon^{-1}$. The resulting space has $mn-1$ points.
	\item In the remaining cases, since $A$ is diagonalizable, so is $A^n$, with different eigenvalues, so it belongs to $H_2$. Hence, the stabilizer is conjugated to $H_2$ and we get the stratum $\cV_{H_2, H_2}$. Looking at the parameter $t$, we have $\widetilde{\cV}_{H_2, H_2} = \CC^*-\bm{\mu}_{2mn}$ and 
	$\cV_{H_2, H_2} = \widetilde{\cV}_{H_2, H_2} / \ZZ_2$, which is isomorphic to $\CC$ with $mn+1$ points removed.
\end{itemize}
\item If $A,B\in H_1$, all stabilizers are equal to $\SL_2$, so that $\cU_{\SL_2}=S \subset \lbrace (\pm \Id, \pm \Id) \rbrace$, described above,
 and $\widetilde{\cV}_{\SL_2,\SL_2}=\cV_{\SL_2,\SL_2}=S$ since $N_{\SL_2} = 1$.
\end{itemize}
\end{itemize}

Having analyzed these strata, let us look at the corresponding character variety $\X^d(n,m)$. 
\begin{itemize}
\item For $\pi^{-1}(\X(n,m)^*) = \cW_{H_1, \SL_2}$. %Corollary \ref{cor:irreduciblelocus} shows that $\cW_{H_1, \SL_2}= \X(n,m)^* \times \SL_2^{d-1}$.
\item For $\cW_{\SL_2,\SL_2}$, recall that $V^0_{\SL_2,\SL_2} = S\subset \{(\pm \Id, \pm \Id)\}$ and we have that
\begin{align*}
	\cW_{\SL_2,\SL_2} &= \left( S \times \left( \SL_2^{d-1} \right) \sslash \SL_2\right) \sslash N_{\SL_2} = S \times \left( \SL_2^{d-1} \sslash \SL_2 \right).
\end{align*}
This stratum consists of two copies of the representation variety of the free group in $d-1$ generators.
\item For $\widetilde{\cW}_{H_2, \SL_2}$, we have $V^0_{H_2, \SL_2} = \bm{\mu}_{2mn} - \{\pm 1\}$ with the $\ZZ_2$-action $\upsilon \mapsto 
\upsilon^{-1}$. Hence, recalling from \eqref{def:subgroupsSL2} that $H_2\cong \CC^*$, we get
\begin{align*}
	\widetilde{\cW}_{H_2,\SL_2} &= \left(\bm{\mu}_{2mn} - \{\pm 1 \}\right) \times \SL_2^{d-1} \sslash \CC^*,
\end{align*}
and $\cW_{H_2,\SL_2} = \widetilde{\cW}_{H_2,\SL_2} / \ZZ_2$. The $\CC^{\ast}$-action on $\SL_2^{d-1} $ is given by conjugation by diagonal matrices, so it may be trivial or not depending on the reducibility of the representation. 

\item For $\widetilde{\cW}_{H_2, H_2}$, since $V^0_{H_2, H_2} = \CC^*-\bm{\mu}_{2mn}$ and $H_2\cong \CC^{\ast}$, we have
\begin{align*}
	\widetilde{\cW}_{H_2,H_2} &=  \left(\CC^*-\bm{\mu}_{2mn}\right) \times (\CC^*)^{d-1} \sslash \CC^* = \left(\CC^*-\bm{\mu}_{2mn}\right) \times (\CC^*)^{d-1},
\end{align*}
where in the last equality we used that the $H_2$-action on $(\CC^*)^{d-1}$ is trivial. Hence, we have ${\cW}_{H_2,H_2} = \left(\left(\CC^*-\bm{\mu}_{2mn}\right) \times (\CC^*)^{d-1}\right) / \ZZ_2$.
\end{itemize}

\subsection{$E$-polynomials of $\X^d_{\SL_2}(n,m)$}

The main result of this section is the following theorem.

\begin{thm}\label{thm:e-poly-sl2}
The $E$-polynomial of $\X^d_{\SL_2}(n,m)$ is given by:
\begin{align*}
e(\X^d_{\SL_2}(n,m)) = & \phantom{+} \frac{1}{2}(m-1)(n-1)(q-2)(q^3-q)^{d-1} \\
& +(mn-1)\left((q^2+q)(q^3-q)^{d-2}-(q-1)^{d-2}(2q^{d-1}-1) \right) \\
& + 2\left( (q^3-q)^{d-2}-(q^2-q)^{d-2}+ \frac{1}{2}q((q+1)^{d-2}+(q-1)^{d-2}) \right) \\
& + (q-2)\frac{1}{2}\left( (q+1)^{d-1}+(q-1)^{d-1} \right) +\frac{1}{2}\left( (q+1)^{d-1}-(q-1)^{d-1} \right).
\end{align*}
\end{thm}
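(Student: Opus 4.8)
The plan is to use the stratification of $\X^d(n,m)$ established above,
\[
\X^d(n,m) = \cW_{H_1,\SL_2}\ \sqcup\ \cW_{\SL_2,\SL_2}\ \sqcup\ \cW_{H_2,\SL_2}\ \sqcup\ \cW_{H_2,H_2},
\]
to compute the $E$-polynomial of each stratum and add them by additivity. The four pieces will match, respectively, the first line of the statement, the third line, and the second line together with the fourth and fifth lines. Splitting off the last two pieces jointly is forced, since $\cW_{H_2,\SL_2}$ and $\cW_{H_2,H_2}$ each carry an $mn$-linear summand that cancels only when the two are combined.

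For the irreducible stratum, Corollary \ref{cor:irreduciblelocus} gives $e(\cW_{H_1,\SL_2}) = e(\X(n,m)^*)\,e(\SL_2)^{d-1}$, and since $\X(n,m)^*$ is a disjoint union of $\tfrac12(m-1)(n-1)$ copies of $\CC\setminus\{0,1\}$ \cite{mun:2009}, this equals $\tfrac12(m-1)(n-1)(q-2)(q^3-q)^{d-1}$, the first line. For the central stratum, $\cW_{\SL_2,\SL_2} = S\times(\SL_2^{d-1}\sslash\SL_2)$ with $|S|=2$, so it contributes $2\,e(\X_{\SL_2}(\Free{d-1}))$, twice the $E$-polynomial of the $\SL_2$-character variety of the free group (equivalently, of the $(d-1)$-component trivial link). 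This polynomial is known \cite{florentinonozadzamora:2021,florentino2009topology,Florentino-Lawton:2012}, and a direct stratification of $\SL_2^{k}$ into irreducible tuples (a free $\PGL_2(\CC)$-quotient, so dividing by $q^3-q$) and totally reducible ones ($(\CC^*)^{k}/\ZZ_2$ under the Weyl involution) gives $e(\X_{\SL_2}(\Free{k})) = (q^3-q)^{k-1} - (q^2-q)^{k-1} + \tfrac12 q\big((q+1)^{k-1}+(q-1)^{k-1}\big)$; with $k=d-1$ this is the third line.

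The two remaining strata are treated with Theorem \ref{thm:local-model}. In $\cW_{H_2,\SL_2}$ the base $V^0_{H_2,\SL_2}=\bm{\mu}_{2mn}\setminus\{\pm 1\}$ is a free $\ZZ_2$-set of $2(mn-1)$ points, so $\cW_{H_2,\SL_2} = \big(V^0_{H_2,\SL_2}\times(\SL_2^{d-1}\sslash\CC^*)\big)/\ZZ_2$; since for a finite free $\ZZ_2$-set $X$ formula \eqref{eqn:epolyZ2} forces $e\big((X\times Y)/\ZZ_2\big) = |X/\ZZ_2|\cdot e(Y)$ for any $\ZZ_2$-variety $Y$, we get $e(\cW_{H_2,\SL_2}) = (mn-1)\,e(\SL_2^{d-1}\sslash\CC^*)$, the quotient being by conjugation by the diagonal torus. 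To compute $e(\SL_2^{d-1}\sslash\CC^*)$ I would stratify $\SL_2^{d-1}$ according to how the $\CC^*$-orbit sits relative to that torus: the all-upper-triangular tuples form a copy of $(\CC^*\ltimes\CC)^{d-1}$ with $E$-polynomial $(q^2-q)^{d-1}$, likewise the all-lower-triangular tuples, intersecting in the diagonal tuples $(\CC^*)^{d-1}$; among these only the diagonal tuples have closed orbits, while every tuple in the complementary $\CC^*$-invariant open "mixed" locus has a closed one-dimensional orbit (stabiliser $\{\pm I\}$), so the mixed locus has a geometric quotient. Inclusion–exclusion gives $e(\mathrm{mixed}) = (q^3-q)^{d-1}-2(q^2-q)^{d-1}+(q-1)^{d-1}$, hence $e(\SL_2^{d-1}\sslash\CC^*) = e(\mathrm{mixed})/(q-1) + (q-1)^{d-1} = (q^2+q)(q^3-q)^{d-2} - (q-1)^{d-2}(2q^{d-1}-1) + (q-1)^{d-1}$. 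For $\cW_{H_2,H_2} = \big((\CC^*\setminus\bm{\mu}_{2mn})\times(\CC^*)^{d-1}\big)/\ZZ_2$ with $\ZZ_2$ acting by simultaneous inversion, I would apply Example \ref{ex:pm-formula} with $e_{\ZZ_2}(\CC^*) = qT - N$ — so by Künneth the invariant and anti-invariant parts of $(\CC^*)^{d-1}$ are $\tfrac12\big((q+1)^{d-1}+(q-1)^{d-1}\big)$ and $-\tfrac12\big((q+1)^{d-1}-(q-1)^{d-1}\big)$ — and with $(\CC^*\setminus\bm{\mu}_{2mn})/\ZZ_2\cong\CC$ minus $mn+1$ points, so $e^+ = q-mn-1$, $e^- = -mn$; then \eqref{eqn:epolyZ2} yields $e(\cW_{H_2,H_2}) = \tfrac12(q-1)(q+1)^{d-1} + \tfrac12(q-2mn-1)(q-1)^{d-1}$.

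Finally I would add the four contributions. The $(q-1)^{d-1}$-terms of the last two strata combine as $(mn-1)(q-1)^{d-1} + \tfrac12(q-2mn-1)(q-1)^{d-1} = \tfrac12(q-3)(q-1)^{d-1}$, so $\cW_{H_2,\SL_2}+\cW_{H_2,H_2}$ contributes $(mn-1)\big((q^2+q)(q^3-q)^{d-2}-(q-1)^{d-2}(2q^{d-1}-1)\big)$ together with $\tfrac12(q-1)(q+1)^{d-1}+\tfrac12(q-3)(q-1)^{d-1}$, and the latter rewrites as $(q-2)\tfrac12\big((q+1)^{d-1}+(q-1)^{d-1}\big)+\tfrac12\big((q+1)^{d-1}-(q-1)^{d-1}\big)$, i.e.\ the fourth and fifth lines; summing with the first and third lines gives the claimed formula. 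The main obstacle is the GIT quotient $\SL_2^{d-1}\sslash\CC^*$: one must correctly identify which $\CC^*$-orbits are closed (the mixed tuples, not just the diagonal ones), verify that the quotient is the disjoint union of the open geometric quotient of the mixed locus and the closed copy $(\CC^*)^{d-1}$ of diagonal tuples, and then carry out the inclusion–exclusion bookkeeping. A secondary technical point is propagating the Weyl group $\ZZ_2 = N_{H_2,H_2}$ through Theorem \ref{thm:local-model} consistently: for $\cW_{H_2,\SL_2}$ it is harmless since that $\ZZ_2$ acts freely on the finite base, but for $\cW_{H_2,H_2}$ one genuinely needs the equivariant $E$-polynomial machinery of Example \ref{ex:pm-formula}.
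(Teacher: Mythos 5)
Your proposal is correct and follows essentially the same route as the paper: the same four-stratum decomposition, identical treatment of $\cW_{H_1,\SL_2}$ and $\cW_{\SL_2,\SL_2}$, and the only difference being bookkeeping --- the paper splits off the reducible (diagonal) part of $\cW_{H_2,\SL_2}$ and merges it with $\cW_{H_2,H_2}$ into a single $\ZZ_2$-equivariant computation over $\CC^*\setminus\{\pm1\}$, whereas you compute the full GIT quotient $\SL_2^{d-1}\sslash\CC^*$ (correctly identifying which orbits are closed) and treat $\cW_{H_2,H_2}$ separately, recombining at the end to the same expression. One cosmetic caveat: in your parenthetical re-derivation of $e(\X_{\SL_2}(\Free{k}))$ the individual contributions you assign to the irreducible and totally reducible loci are not quite right (e.g.\ for $k=2$ the semisimple reducible part $(\CC^*)^2/\ZZ_2$ contributes $q^2+1$, not $\tfrac12 q\cdot 2q=q^2$), although their sum --- the cited Cavazos--Lawton formula, which is all the proof actually uses --- is correct.
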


\begin{proof}
We follow the decomposition given in Section \ref{sec:sl2geodesc},
$$
\X_{\SL_2}^d(n,m)= \cW_{H_1, \SL_2} \sqcup \cW_{H_2,\SL_2} \sqcup \cW_{\SL_2,\SL_2} \sqcup \cW_{H_2,H_2},
$$
that stratifies the character variety into locally closed subvarieties that are well suited for computations.
First of all,
$$
e(\cW_{H_1,\SL_2})=\frac{1}{2}(m-1)(n-1)(q-2)(q^3-q)^{d-1},
$$
since $e(\cW_{H_1, \SL_2}) = e(\X(n,m)^{\ast}) e(\sldos)^{d-1}$, and $\X(n,m)^{\ast}$ consists of $\frac{1}{2}(m-1)(n-1)$ components isomorphic to $\CC$ with two points removed.

Secondly, since $\cW_{\SL_2,\SL_2} \cong S \times \sldos^{d-1} \sslash \sldos$, the stratum is given by two copies of the character variety of the free group with $d-1$ generators. Its $E$-polynomial was computed in \cite{cavazos-lawton:2014}, so we deduce that
\begin{align*}
e(\cW_{\SL_2,\SL_2})& =2\left((q-1)^{d-2}((q+1)^{d-2}-1)q^{d-2} + \frac{1}{2}q\left( (q-1)^{d-2}+(q+1)^{d-2} \right) \right) \\
& = 2 \left( (q^3-q)^{d-2}-(q^2-q)^{d-2}+ \frac{1}{2}q((q+1)^{d-2}+(q-1)^{d-2}) \right).
\end{align*}

If we look now at $\cW_{H_2,\SL_2}$, there are $2mn-2$ pairs $(\epsilon,\varepsilon) \in \bm{\mu}_{2n}\times \bm{\mu}_{2m}$ that satisfy $\epsilon^n=\varepsilon^m=\pm 1$, which decomposes $\cW_{H_2,\SL_2}$ into several disjoint components. The $\ZZ_2$-action identifies them and leaves $mn-1$ components, each one isomorphic to $\sldos^{d-1}\sslash \CC^{\ast}$. Therefore, the fibre over $(\epsilon,\varepsilon)$ is given by $d-1$ elements in $\sldos$, quotiented by the $\CC^{\ast}$-action by conjugation by diagonal matrices. Call $\cW^f_{H_2,\SL_2}$ one of these fibers,
so that $e(\cW_{H_2,\SL_2})=(mn-1) e(\cW^f_{H_2,\SL_2})$.

To compute $e(\cW^f_{H_2,\SL_2})$, we do as follows.
Reducible representations occur when all the $F_i$ are simultaneously upper or lower triangular, which are parametrized by two copies of $(\CC^{\ast})^{d-1}\times \CC^{d-1}$, where the diagonal matrices $(\CC^{\ast})$ have been accounted twice. Substracting its contribution, the $E$-polynomial of the reducible part of $W_{H_2,\SL_2}$ is given by
$$
e(W_{H_2,\SL_2}^{f,\textrm{red}})=2(q-1)^{d-1}q^{d-1}-(q-1)^{d-1}=(q-1)^{d-1}(2q^{d-1}-1).
$$
We obtain the $E$-polynomial of the irreducible locus as
\begin{align*}
e(W^{f,\textrm{irr}}_{H_2,\SL_2}) & = e\left(\sldos^{d-1}\right)-e(W^{\textrm{red}}_{H_2,\SL_2})= (q^3-q)^{d-1}-(q-1)^{d-1}(2q^{d-1}-1),
\end{align*}
and since the action is free,
$$
e(\cW^{f,\textrm{irr}}_{H_2,\SL_2})  = e(W^{f,\textrm{irr}}_{H_2,\SL_2})/(q-1) 
= (q^2+q)(q^3-q)^{d-2}-(q-1)^{d-2}(2q^{d-1}-1).
$$

All the remaining representations that need to be taken into account are reducible, and they belong to either  $\mathcal{W}_{H_2,\SL_2}^{\textrm{red}}$ or $\cW_{H_2,H_2}$. They are all $S$-equivalent to
$$
(A,B,F_i)\sim \left( \begin{pmatrix} t^m & 0 \\ 0 & t^{-m}   \end{pmatrix}, \begin{pmatrix} t^n & 0 \\ 0 & t^{-n} \end{pmatrix}, \begin{pmatrix} a_i & 0 \\ 0 & a_i^{-1} \end{pmatrix} \right), \: i=1,\ldots,d-1,
$$
where $t\neq 0,\pm 1$, $a_i\in \CC^{\ast}$, and the $\ZZ_2$-action is given now by $(t,a_1,\ldots,a_{d-1})\sim (t^{-1},a_1^{-1},\ldots,a_{d-1}^{-1})$. Denoting $B=\CC^{\ast}-\lbrace \pm 1 \rbrace$ and $F=(\CC^{\ast})^{d-1}$, and using the notation of Example \ref{ex:pm-formula}, we have that $
e(B^+)=q-2, \: e(B^-)=1 $ and also
$$
e(F^+)=\frac{1}{2}\left((q+1)^{d-1}+(q-1)^{d-1} \right), \quad e(F^-)=\frac{1}{2}\left((q+1)^{d-1}- (q-1)^{d-1} \right).
$$
Hence, formula (\ref{eqn:epolyZ2}) implies that
\begin{align*}
e(W_{H_2,\SL_2}^{\textrm{red}} \sslash \CC^*)+e(\cW_{H_2,H_2}) & = e(B^+)e(F^+)+e(B^-)e(F^-) \\ 
 & = (q-2) \frac12 \left((q+1)^{d-1}+(q-1)^{d-1} \right) + \frac{1}{2}\left((q+1)^{d-1} - (q-1)^{d-1} \right).
\end{align*}
Adding all contributions yields the desired polynomial.
\end{proof}

\begin{rem}
Note that when $d=2$ and $m=1$, we get that
$$
e(\X^2_{\SL_2}(1,n)) = (n-1)(q^2-q+1)+q^2+1,
$$
which agrees with the $E$-polynomial of the Hopf link that was computed in \cite{gonmun:2022}. We also obtain when $d=1$ that
$$
e(\X^1_{\SL_2}(n,m)) = \frac{1}{2}(m-1)(n-1)(q-2)+q,
$$
which agrees with the $E$-polynomial of the torus knot character variety described in \cite{mun:2009}.
\end{rem}

%%%%%%%%%%%%%%%%%%%%%%%%%%%%%%%%%%%%%%%%
\section{$\lambda$-character varieties}\label{sec:lambda-character-varieties}
%%%%%%%%%%%%%%%%%%%%%%%%%%%%%%%%%%%%%%%%

Before proceeding with the rank $3$ case, let us discuss a new kind of character varieties that will be very useful for upcoming calculations.
Let $\Gamma$ be a finitely generated group, $V$ a finite dimensional complex vector space and let us consider $\mathcal{R}_{\GL(V)}(\Gamma) = \Hom(\Gamma, \GL(V))$ the $\GL(V)$-representation variety. Fix a decomposition $V_\bullet = V_1 \oplus V_2 \ldots \oplus V_s$ into non-trivial vector subspaces $V_i \subset V$ and set $\GL(V_\bullet) = \GL(V_1) \times \GL(V_2) \times \ldots \times \GL(V_s)$. In this context, the $V_\bullet$-character variety is the quotient space
$$
	\mathcal{X}_{V_\bullet}(\Gamma) = \mathcal{R}_{\GL(V)}(\Gamma) \sslash \GL(V_\bullet).
$$
Here $\GL(V_\bullet) < \GL(V)$ is acting on $\mathcal{R}_{\GL(V)}(\Gamma)$ by conjugation.

\begin{ex}
In the trivial case $V_\bullet = V$, we have $\GL(V_\bullet) = \GL(V)$ and we recover the usual $\GL(V)$-character variety.
\end{ex}

Many of the usual definitions for representations varieties can be extended to the $V_\bullet$-context. A $V_\bullet$-subspace is a linear subspace $W \subset V$ such that $W = W_1 \oplus W_2 \oplus \ldots \oplus W_s$ with $W_i \subset V_i$. A representation $\Gamma \to \GL(V)$ is said to be $V_\bullet$-irreducible if it admits no non-trivial invariant $V_\bullet$-subspace. In the same vein, a representation $\rho$ is said to be $V_\bullet$-semisimple if there exist invariant $V_\bullet$-subspaces $W^1, \ldots, W^r$ such that $V = W^1 \oplus \ldots \oplus W^r$ and $\rho|_{W^i}$ is $V_\bullet$-irreducible for all $i = 1, \ldots, r$.

First of all, observe that we can straightforwardly adapt Schur's lemma to this context.
 
\begin{lem}
Let $\rho \in  \mathcal{R}_{\GL(V)}(\Gamma)$ be a $V_\bullet$-irreducible representation and $A \in \GL(V_\bullet)$ an intertwining map, i.e.\ $A \circ \rho(\gamma) = \rho(\gamma) \circ A$, for all $\gamma \in \Gamma$. Then $A$ is a multiple of the identity.

\begin{proof}
Let $\lambda \in \CC^*$ be an eigenvalue of $A$. The map $A - \lambda \Id$ is also an intertwining map in $\GL(V_\bullet)$, so its kernel is a non-zero invariant $V_\bullet$-subspace. Since $\rho$ is $V_\bullet$-irreducible, it admits no non-trivial $V_\bullet$-subspaces, so the kernel of $A - \lambda \Id$ must be the whole space $V$, and thus $A = \lambda \Id$, as claimed.
\end{proof}
\end{lem}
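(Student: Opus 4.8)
The plan is to run the classical proof of Schur's lemma, taking care that the eigenspace it produces is compatible with the fixed decomposition $V_\bullet$. Since $A \in \GL(V_\bullet)$ is in particular an invertible endomorphism of the nonzero finite-dimensional complex vector space $V$, it has at least one eigenvalue $\lambda$, and $\lambda \neq 0$ because $A$ is invertible. Set $B = A - \lambda\,\Id$.

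Next I would record the two properties of $B$ that matter. First, $B$ commutes with every $\rho(\gamma)$: this is immediate since $A$ does by hypothesis and $\Id$ is central in $\End(V)$. Consequently $\ker B$ is a $\rho$-invariant subspace of $V$, because $Bv = 0$ implies $B(\rho(\gamma)v) = \rho(\gamma)(Bv) = 0$. Second --- and this is the point where the $V_\bullet$-structure enters --- $B$ preserves each summand $V_i$, since $A = (A_1, \dots, A_s) \in \GL(V_1) \times \cdots \times \GL(V_s)$ is block-diagonal with respect to $V_\bullet$ and so is $\Id$; hence $\ker B = \bigoplus_{i=1}^s \ker(A_i - \lambda\,\Id_{V_i})$ is a $V_\bullet$-subspace of $V$ in the sense defined above.

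Finally I would conclude: $\ker B$ is nonzero because $\lambda$ is an eigenvalue of $A$, and it is a $\rho$-invariant $V_\bullet$-subspace, so $V_\bullet$-irreducibility of $\rho$ forces $\ker B = V$, i.e.\ $A = \lambda\,\Id$.

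The only subtle point is the observation that $\ker(A - \lambda\,\Id)$ is genuinely a $V_\bullet$-subspace and not merely a linear subspace; this relies on $A$ being block-diagonal for the fixed decomposition, which is exactly the constraint $A \in \GL(V_\bullet)$. Everything else is verbatim the usual argument, and nothing beyond $\dim_\CC V < \infty$ (so that an eigenvalue exists over $\CC$) is needed.
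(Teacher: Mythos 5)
Your proposal is correct and follows essentially the same argument as the paper: pick an eigenvalue $\lambda$ of $A$, observe that $\ker(A-\lambda\Id)$ is a nonzero $\rho$-invariant $V_\bullet$-subspace, and invoke $V_\bullet$-irreducibility to force it to be all of $V$. The extra care you take in checking that the kernel is block-compatible with $V_\bullet$ is exactly the point the paper's proof relies on implicitly, so there is nothing to add.
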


Additionally, we can obtain an analogous criterion of polystability for the $V_\bullet$-character variety. The proof is completely analogous to the one provided in \cite[Section 6.1]{gon:2023} using only $V_\bullet$-conjugation in Mumford's criterion.

\begin{prop}\label{prop-polystable-Vlambda}
Consider the action of $\GL(V_\bullet)$ on $\mathcal{R}_{\GL(V)}(\Gamma)$. A representation is polystable if and only if it is $V_\bullet$-semisimple.
\end{prop}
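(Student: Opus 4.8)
The plan is to mimic the classical GIT argument for ordinary character varieties (the action of $\GL(V)$ on $\mathcal{R}_{\GL(V)}(\Gamma)$), replacing conjugation by the whole group with $V_\bullet$-conjugation throughout, exactly as indicated by the reference to \cite[Section 6.1]{gon:2023}. Concretely, I would invoke the Hilbert--Mumford numerical criterion for the action of the reductive group $\GL(V_\bullet)$ on the affine variety $\mathcal{R}_{\GL(V)}(\Gamma) \subset \GL(V)^k$: a point $\rho$ is polystable (its orbit is closed) if and only if for every one-parameter subgroup $\lambda : \CC^* \to \GL(V_\bullet)$ for which $\lim_{t\to 0}\lambda(t)\cdot\rho$ exists, that limit lies in the same $\GL(V_\bullet)$-orbit as $\rho$. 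The whole point is that one-parameter subgroups of $\GL(V_\bullet)$ are tuples of one-parameter subgroups of the factors $\GL(V_i)$, so they simultaneously induce weight decompositions $V_i = \bigoplus_w V_i^{(w)}$ and hence a weight decomposition $V = \bigoplus_w V^{(w)}$ by $V_\bullet$-subspaces.

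First I would unwind what it means for $\lim_{t\to 0}\lambda(t)\rho(\gamma)\lambda(t)^{-1}$ to exist for all $\gamma$: writing $\rho(\gamma)$ in block form with respect to the weight grading, the entry mapping the weight-$w$ piece to the weight-$w'$ piece gets scaled by $t^{w'-w}$, so finiteness of the limit forces $\rho(\gamma)$ to preserve the ascending filtration $F_{\le w} = \bigoplus_{w' \le w} V^{(w')}$, and the limit $\rho_0$ is the associated-graded representation acting on $\bigoplus_w V^{(w)}$ block-diagonally. This is the standard degeneration-to-the-associated-graded picture. For the ``if'' direction (semisimple $\Rightarrow$ polystable): if $\rho$ is $V_\bullet$-semisimple, write $V = W^1 \oplus \cdots \oplus W^r$ into $V_\bullet$-irreducible invariant pieces; any weight filtration preserved by $\rho$ restricts to a filtration of each $W^j$ preserved by the irreducible $\rho|_{W^j}$, which by $V_\bullet$-irreducibility must be trivial on each $W^j$ (a proper nonzero invariant $V_\bullet$-subspace is forbidden), so the associated graded is isomorphic to $\rho$ as a $\GL(V_\bullet)$-point; hence every such limit is in the orbit and the orbit is closed. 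For the ``only if'' direction (polystable $\Rightarrow$ semisimple): if $\rho$ is not $V_\bullet$-semisimple, it has an invariant $V_\bullet$-subspace $W = \bigoplus W_i$ with no invariant $V_\bullet$-complement; choose a $V_\bullet$-splitting $V = W \oplus W'$ (i.e.\ $W'_i$ a complement of $W_i$ inside $V_i$, which exists since we only need a linear complement within each factor) and let $\lambda(t)$ act as $\mathrm{id}$ on $W$ and as $t\cdot\mathrm{id}$ on $W'$ — this is a one-parameter subgroup of $\GL(V_\bullet)$, the limit $\rho_0$ exists (it is block lower/upper triangular degenerating off-diagonal blocks to zero), and $\rho_0$ is not $\GL(V_\bullet)$-conjugate to $\rho$ precisely because $W'$ would have to be an invariant $V_\bullet$-complement of $W$ for $\rho$, contradiction; so the orbit of $\rho$ is not closed.

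The main obstacle — and the only place where the $V_\bullet$-setting genuinely differs from the classical one — is bookkeeping the compatibility of filtrations, complements and weight gradings with the fixed decomposition $V_\bullet$: one must check that the one-parameter subgroups destabilizing a non-$V_\bullet$-semisimple representation can be taken inside $\GL(V_\bullet)$ (not just inside $\GL(V)$), and conversely that the filtration induced by an arbitrary one-parameter subgroup of $\GL(V_\bullet)$ is by $V_\bullet$-subspaces so that the associated-graded argument stays within the category of $V_\bullet$-representations. Both points follow from the elementary observation that a one-parameter subgroup of a product is a tuple of one-parameter subgroups of the factors, and that linear complements inside each $V_i$ always exist; once this is observed, the rest is the verbatim Mumford-criterion computation. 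I would therefore keep the proof short, stating the Hilbert--Mumford setup, the weight-decomposition dictionary, and the two directions as above, and remark that it is the literal adaptation of \cite[Section 6.1]{gon:2023} with $\GL(V)$-conjugation replaced by $\GL(V_\bullet)$-conjugation.
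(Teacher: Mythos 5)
Your proposal is essentially the paper's own proof: the paper disposes of this proposition in one line by invoking Mumford's numerical criterion with $V_\bullet$-conjugation in place of full conjugation, exactly as in \cite[Section 6.1]{gon:2023}, and your write-up is the honest expansion of that remark (one-parameter subgroups of a product are tuples of one-parameter subgroups, hence induce weight gradings by $V_\bullet$-subspaces, and the degeneration-to-associated-graded dictionary goes through verbatim). The only step I would tighten is the final claim of the ``only if'' direction: from $\rho_0 = g\rho g^{-1}$ you may only conclude that $g^{-1}W'$ is an invariant $V_\bullet$-complement of $g^{-1}W$, not of $W$ itself, so the contradiction as stated does not quite close; the standard repair is to degenerate all the way to the associated graded of a Jordan--H\"older filtration by invariant $V_\bullet$-subspaces, which is $V_\bullet$-semisimple and lies in the orbit closure, and then use that a closed orbit would force $\rho$ to be isomorphic to this semisimple representation, hence itself $V_\bullet$-semisimple.
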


\begin{cor}
Let $\mathcal{R}_{\GL(V)}^{\lambda-\textrm{ss}}(\Gamma) \subset \mathcal{R}_{\GL(V)}(\Gamma)$ be 
the subspace of $V_\bullet$-semisimple representations (in other words, its polystable locus). The $E$-polynomial of the $V_\bullet$-character variety agrees with the one of the orbit space $\mathcal{R}_{\GL(V)}^{\lambda-\textrm{ss}}(\Gamma) / \GL(V_\bullet)$.
\end{cor}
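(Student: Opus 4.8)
The plan is to deduce this corollary from the GIT machinery together with Proposition~\ref{prop-polystable-Vlambda}, exactly as in the classical case of $G$-character varieties. The key point is that the $E$-polynomial of a GIT quotient $Z \sslash H$ (with $H$ reductive) depends only on the closed orbits, because the quotient map $Z \to Z \sslash H$ identifies two points precisely when the closures of their $H$-orbits meet, and each fiber of $Z \sslash H$ contains a unique closed orbit. Since $\GL(V_\bullet) = \GL(V_1) \times \cdots \times \GL(V_s)$ is a product of reductive groups, hence reductive, this theory applies verbatim to the action of $\GL(V_\bullet)$ on $\mathcal{R}_{\GL(V)}(\Gamma)$.

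First I would recall that a point of $\mathcal{R}_{\GL(V)}(\Gamma)$ has closed $\GL(V_\bullet)$-orbit if and only if it is polystable for this action, and by Proposition~\ref{prop-polystable-Vlambda} this happens exactly when the representation is $V_\bullet$-semisimple; that is, the closed orbits are precisely the points of $\mathcal{R}_{\GL(V)}^{\lambda-\textrm{ss}}(\Gamma)$. Next, since every $\GL(V_\bullet)$-orbit closure contains a unique closed orbit, the inclusion $\mathcal{R}_{\GL(V)}^{\lambda-\textrm{ss}}(\Gamma) \hookrightarrow \mathcal{R}_{\GL(V)}(\Gamma)$ induces a continuous bijection between the orbit space $\mathcal{R}_{\GL(V)}^{\lambda-\textrm{ss}}(\Gamma) / \GL(V_\bullet)$ and the GIT quotient $\mathcal{X}_{V_\bullet}(\Gamma) = \mathcal{R}_{\GL(V)}(\Gamma) \sslash \GL(V_\bullet)$. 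In fact this map is an isomorphism of the underlying topological spaces: the GIT quotient is, by construction, the topological quotient identifying points with intersecting orbit closures, which is the same as collapsing each non-closed orbit onto the unique closed orbit in its closure, and this is what the orbit space of the semisimple locus records.

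Finally, because the mixed Hodge structure on compactly supported cohomology — and hence the $E$-polynomial — is an invariant of a variety that only sees its topology together with the algebraic structure inherited on the semisimple locus (more precisely, the GIT quotient $\mathcal{X}_{V_\bullet}(\Gamma)$ and the orbit space $\mathcal{R}_{\GL(V)}^{\lambda-\textrm{ss}}(\Gamma)/\GL(V_\bullet)$ are homeomorphic, and the homeomorphism is compatible with the algebraic structure in the sense needed for $E$-polynomials), we conclude $e(\mathcal{X}_{V_\bullet}(\Gamma)) = e\big(\mathcal{R}_{\GL(V)}^{\lambda-\textrm{ss}}(\Gamma) / \GL(V_\bullet)\big)$. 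The main obstacle is purely expository: making precise in what sense "the $E$-polynomial only depends on the polystable locus" is legitimate. The cleanest route is to invoke the standard fact (used throughout the character variety literature, e.g.\ as in \cite{LMN}) that for a reductive group acting on an affine variety the GIT quotient is homeomorphic to the quotient of the polystable locus by the group action, so their $E$-polynomials agree; everything else is then a direct translation of Proposition~\ref{prop-polystable-Vlambda} into this language.
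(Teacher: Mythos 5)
Your argument is correct and is exactly the deduction the paper intends (the corollary is stated without proof as an immediate consequence of Proposition \ref{prop-polystable-Vlambda}): closed orbits of the reductive group $\GL(V_\bullet)$ are the polystable points, the proposition identifies these with the $V_\bullet$-semisimple representations, and the GIT quotient is naturally identified with the orbit space of the polystable locus, so the $E$-polynomials agree. The only soft spot is the final sentence about the homeomorphism being ``compatible with the algebraic structure''; the clean statement is that the quotient map restricted to the polystable locus is a surjection onto $\mathcal{X}_{V_\bullet}(\Gamma)$ whose fibres are single orbits, so the orbit space carries the algebraic structure of the GIT quotient itself and no separate comparison of Hodge structures is needed.
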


A particular case that we shall use in the following is the case in which $V = \CC^n$ and the decomposition is induced from an ordered partition $\lambda = (\lambda_1, \lambda_2, \ldots, \lambda_s)$ of $n$, i.e.\ satisfying $\sum_i \lambda_i = n$. In this case, if $e_1, \ldots , e_n$ denotes the canonical basis of $\CC^n$, we can set $V_1 = \langle e_1, \ldots, e_{\lambda_1}\rangle$, $V_2 = \langle e_{\lambda_1 +1}, \ldots, e_{\lambda_1 + \lambda_2}\rangle$ and, in general,
$V_i = \langle e_{\lambda_1+ \ldots+\lambda_{i-1} + 1}, \ldots, e_{ \lambda_1 +\ldots + \lambda_i}\rangle$.
%where the sums run over the elements $\lambda_j$ with $j < i$.

For simplicity, we shall denote by $\GL_\lambda$ the group $\GL(V_\bullet) = \GL_{\lambda_1} \times \GL_{\lambda_2} \times \ldots \times \GL_{\lambda_s}$ and, in general, to emphasize the role of the partition we shall talk about $\lambda$-character variety, $\lambda$-subspaces and $\lambda$-semisimplicity instead of the corresponding $V_\bullet$-terms. In particular, the $\lambda$-character variety is the quotient space
$$
	\mathcal{X}_\lambda(\Gamma) = \mathcal{R}_{\GL_n}(\Gamma) \sslash \GL_\lambda,
$$
where $\GL_n = \GL_n(\CC)$.

\begin{rem}
Similar considerations can be done for $\SL_n$-representations in many cases. For instance, for the free group $\Gamma = \Free{d}$ in $d$ generators, recall that we have natural fibrations
$$
	(\CC^*)^d \to \mathcal{R}_{\GL_n}(\Free{d})  \to \mathcal{R}_{\SL_n}(\Free{d}),
$$
given by re-scaling the first column of each matrix. These fibrations descend to the quotient under the action of $\GL_\lambda$, so we have that
$$
	e\left( \mathcal{R}_{\SL_n}(\Free{d}) \sslash \GL_\lambda \right) = \frac{1}{(q-1)^d} e(\mathcal{X}_{\lambda}(\Free{d})).
$$
\end{rem}

%%%%%%%%%%%%%%%%%%%%%%%%%%%%%%%%%
\subsection{The case $\lambda = (1, 1, \ldots, 1)$}
%%%%%%%%%%%%%%%%%%%%%%%%%%%%%%%%%

In this section, we shall study in detail the case $\lambda =  (1, 1, \ldots, 1)$. For this case, we have $\GL_\lambda = \CC^* \times \ldots \times \CC^*$ with action by re-scaling. To be precise, given $(A_1, \ldots, A_r) \in \mathcal{R}_{\GL_n}(\Gamma)$ with $A_k = (a_{ij}^k)$, and $t = (t_1, \ldots, t_n) \in (\CC^*)^n$, we have
\begin{equation}\label{eq:action-cstar}
	t \cdot (A_1, \ldots, A_r) = \left((a_{ij}^1 t_it_j^{-1}), \ldots, (a_{ij}^r t_it_j^{-1}) \right).
\end{equation}
There is a natural stratification of the $\lambda$-semisimple representations $\mathcal{R}_{\GL_n}^{\lambda-\textrm{ss}}(\Gamma) \subset \mathcal{R}_{\GL_n}(\Gamma)$. Let $\mu$ be a partition of the set $\{1, \ldots, n\}$, that is $\mu$ is a disjoint collection of sets of the form $\mu_i = \{i_1, \ldots, i_m\}$ with $1 \leq i_j \leq n$. Then $\mathcal{R}_{\GL_n}^{\mu}(\Gamma) \subset \mathcal{R}_{\GL_n}^{\lambda-\textrm{ss}}(\Gamma)$ is the collection of $\lambda$-semisimple representations with a decomposition into $\lambda$-irreducible representations of the form $W_1 \oplus \ldots \oplus W_s$ where $W_i = \langle e_{i_1}, \ldots, e_{i_m}\rangle$. In particular, the $\lambda$-irreducible representations correspond to the trivial partition $\mu = \{\{1, \ldots, n\}\}$.

In this way, we have a natural stratification
$$
	\mathcal{R}_{\GL_n}^{\lambda-\textrm{ss}}(\Gamma) / (\CC^*)^n = \bigsqcup_{\mu} \mathcal{R}_{\GL_n}^{\mu}(\Gamma) / (\CC^*)^n,
$$
where the disjoint union runs over the partitions $\mu$ of the set $\{1, \ldots, n\}$.

Observe that the diagonal matrices $\CC^* < (\CC^*)^n$ act trivially on $\mathcal{R}_{\GL_n}^{\lambda-\textrm{ss}}(\Gamma)$ by (\ref{eq:action-cstar}). However, if we mod out by this subgroup, it turns out the the action is generically free.

\begin{prop}
If $\mu = \{\mu_1, \ldots, \mu_r\}$ is a partition with $|\mu_i| > 1$ for all $i$, then the action of $(\CC^*)^n/\CC^*$ on $\mathcal{R}_{\GL_n}^{\mu}(\Gamma)$ is free.
\begin{proof}
Let $(A_1, \ldots, A_r) \in \mathcal{R}_{\GL_n}(\Gamma)$. Since $|\mu_i| > 1$ for all $i$, each row and column has an off-diagonal non-vanishing entry in some of the matrices $A_1, \ldots, A_r$. Hence, every $t = (t_1, \ldots, t_n) \in (\CC^*)^n$ acts non-trivially except when $t_i = t_j$ for all $i,j$, which is exactly the condition $t \in \CC^* < (\CC^*)^n$.
\end{proof}
\end{prop}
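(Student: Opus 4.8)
The plan is to recast the freeness statement as a connectivity property of a graph attached to each representation, and then to read off that connectivity from $\lambda$-irreducibility. Given $\rho\in\mathcal{R}_{\GL_n}^\mu(\Gamma)$, written in terms of the generators as a tuple of matrices $A_k=(a_{ij}^k)$, formula \eqref{eq:action-cstar} shows that $t=(t_1,\dots,t_n)\in(\CC^*)^n$ fixes $\rho$ if and only if $a_{ij}^k\,t_it_j^{-1}=a_{ij}^k$ for all $i,j,k$, i.e.\ if and only if $t_i=t_j$ whenever $a_{ij}^k\neq 0$ or $a_{ji}^k\neq 0$ for some $k$. Thus, if $G_\rho$ denotes the undirected graph on $\{1,\dots,n\}$ having an edge $\{i,j\}$ exactly when some $A_k$ is non-zero in position $(i,j)$ or $(j,i)$, the stabilizer of $\rho$ in $(\CC^*)^n$ is precisely the subgroup of tuples that are constant on each connected component of $G_\rho$. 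So it suffices to prove that $G_\rho$ is connected for every $\rho\in\mathcal{R}_{\GL_n}^\mu(\Gamma)$: then that stabilizer is just the global scalars $\CC^*<(\CC^*)^n$, and the induced $(\CC^*)^n/\CC^*$-action is free.

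First I would use the definition of $\mathcal{R}_{\GL_n}^\mu(\Gamma)$: such a $\rho$ carries an invariant coordinate decomposition $V=W_1\oplus\cdots\oplus W_r$ with $W_i=\langle e_j : j\in\mu_i\rangle$ and $\rho|_{W_i}$ being $\lambda$-irreducible, so all $A_k$ are block diagonal for this splitting; hence $G_\rho$ has no edge joining different blocks, and the problem reduces to showing that each induced subgraph $G_\rho|_{\mu_i}$ is connected. For that, fix a block $\mu_i$ and suppose $G_\rho|_{\mu_i}$ were disconnected; since $|\mu_i|>1$ one may choose a non-empty proper subset $S\subsetneq\mu_i$ that is a union of connected components of $G_\rho|_{\mu_i}$, and then $a_{jl}^k=a_{lj}^k=0$ for all $k$ as soon as $j\in\mu_i\setminus S$ and $l\in S$; consequently $\langle e_l : l\in S\rangle$ is a non-zero proper $\lambda$-invariant subspace of $W_i$, contradicting the $\lambda$-irreducibility of $\rho|_{W_i}$. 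Hence every $G_\rho|_{\mu_i}$ is connected; so $G_\rho$ is connected, the only elements of $(\CC^*)^n$ fixing $\rho$ are the scalars, and the $(\CC^*)^n/\CC^*$-action is free.

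The routine parts are the translation through \eqref{eq:action-cstar} and the block-diagonal structure; the step that requires care — and which I would regard as the heart of the argument — is the implication ``$\rho|_{W_i}$ $\lambda$-irreducible and $|\mu_i|>1$ $\Rightarrow$ $G_\rho|_{\mu_i}$ connected''. The key observations are that $\lambda$-irreducibility forbids invariant \emph{coordinate} subspaces, precisely the subspaces seen by the diagonal torus, so it is exactly the hypothesis matched to this action; that the bound $|\mu_i|>1$ is indispensable, since a block of size one is an isolated vertex of $G_\rho$ whose torus coordinate fixes $\rho$ and destroys freeness; and that one must take $S$ to be a union of \emph{undirected} connected components, so that both $a^k_{jl}$ and $a^k_{lj}$ vanish across the cut and $\langle e_l : l\in S\rangle$ is genuinely invariant rather than merely semi-invariant.
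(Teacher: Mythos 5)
Your argument is a careful expansion of the paper's one-line proof: you make explicit the graph $G_\rho$ on $\{1,\dots,n\}$ whose edges record non-vanishing off-diagonal entries, observe that the stabilizer of $\rho$ in $(\CC^*)^n$ consists of the tuples constant on each connected component of $G_\rho$, and correctly deduce from the $\lambda$-irreducibility of $\rho|_{W_i}$ that each induced subgraph $G_\rho|_{\mu_i}$ is connected. Up to that point your reasoning is sound, and in fact more complete than the printed proof, which only records that no row or column is identically zero off the diagonal (absence of isolated vertices in $G_\rho$) --- a property that by itself does not imply connectivity.

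However, the final inference ``every $G_\rho|_{\mu_i}$ is connected, so $G_\rho$ is connected'' is a non sequitur, and it is exactly where the difficulty sits. As you yourself note, the invariant block decomposition forces $G_\rho$ to have \emph{no} edges joining distinct parts $\mu_i$, so $G_\rho$ has precisely $r$ connected components and the stabilizer of $\rho$ in $(\CC^*)^n$ is the $r$-dimensional subtorus of tuples constant on each block. Hence the action of $(\CC^*)^n/\CC^*$ has stabilizer isomorphic to $(\CC^*)^{r-1}$ at every point of $\mathcal{R}_{\GL_n}^{\mu}(\Gamma)$ and is free only when $r=1$: for $n=4$ and $\mu=\{\{1,2\},\{3,4\}\}$, the element $t=(1,1,c,c)$ with $c\neq 1$ fixes every point of the stratum yet is not a global scalar. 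This is not a repairable slip in your write-up but a defect of the proposition as stated; the paper's own proof breaks at the same step, since ``no isolated vertices'' does not force $t_i=t_j$ for all $i,j$. The statement, your argument, and the paper's are all correct precisely for the one-block partition $\mu=\{\{1,\dots,n\}\}$, which is the only element of $\mathcal{P}_0$ for the values $n=2,3$ actually used later, so the downstream computations are unaffected. A correct general formulation would assert that the stabilizer of every point of $\mathcal{R}_{\GL_n}^{\mu}(\Gamma)$ is the subtorus of tuples constant on each $\mu_i$, and claim freeness of $(\CC^*)^n/\CC^*$ only when $\mu$ has a single part.
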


Recall that if $X/G$ is an orbit space GIT quotient with $G$ connected and acting freely, then $e(X/G) = e(X)/e(G)$, c.f.\ \cite[Remark 2.5]{LMN}. In this way, let us denote by $\mathcal{P}_0$ the collection of partitions $\mu = \{\mu_i\}$ of $\{1, \ldots, n\}$ with $|\mu_i| > 1$ for all $i$. We have that 
$$
	e(\mathcal{X}_\lambda(\Gamma)) = e(\mathcal{R}_{\GL_n}^{\lambda-\textrm{ss}}(\Gamma) / (\CC^*)^n) = \sum_{\mu \not\in \mathcal{P}_0} e\left(\mathcal{R}_{\GL_n}^{\mu}(\Gamma) / (\CC^*)^n\right) +  \frac{1}{(q-1)^{n-1}}\sum_{\mu \in \mathcal{P}_0} e(\mathcal{R}_{\GL_n}^{\mu}(\Gamma)).
$$
The two terms of this sum can be computed using a different strategy. For each summand of the first term, we have that $\mu = \{i\} \cup \mu'$ for some partition $\mu'$ and $\mathcal{R}_{\GL_n}^{\mu}(\Gamma)/(\CC^*)^{n} = \CC^* \times \mathcal{R}_{\GL_{n-1}}^{\mu'}(\Gamma)/(\CC^*)^{n-1}$, so they can be computed recursively on $n$. For the second term, it is typically easier to compute its complement using the inclusion-exclusion principle, since it is made of $\lambda$-reducible representations.

\begin{ex}
Let us take $\Gamma = \Free{d}$ the free group in $d$ generators. In the case $n = 1$ we have $\mathcal{R}_{\GL_1}(\Gamma) = (\CC^*)^d$ with the trivial action, so $e(\mathcal{X}_{(1)}(\Gamma)) = (q-1)^d$.
\end{ex}

\begin{ex}\label{ex:totally-action-sl2}
In the case $n = 2$ and $\Gamma =  \Free{d}$, the possible partitions are $\mu^1 = \{\{1\}, \{2\}\}$ and $\mu^2 = \{\{1,2\}\}$, and $\mathcal{P}_0 = \{\mu^2\}$. Hence, in general we have
\begin{equation}\label{eq:decomposition-X11}
	e(\mathcal{X}_{(1,1)}(\Gamma)) = e\left(\mathcal{R}_{\GL_2}^{\mu^1}(\Gamma) / (\CC^*)^2\right) +  \frac{1}{q-1} e(\mathcal{R}_{\GL_2}^{\mu^2}(\Gamma)).
\end{equation}
On the one hand, we have $\mathcal{R}_{\GL_2}^{\mu^1}(\Gamma) = \mathcal{R}_{\GL_1}(\Gamma) \times \mathcal{R}_{\GL_1}(\Gamma)$ with the trivial action, so $e\left(\mathcal{R}_{\GL_2}^{\mu^1}(\Gamma) / (\CC^*)^2\right) = e(\mathcal{R}_{\GL_1}(\Gamma))^2$. In particular, for $\Gamma =  \Free{d}$, we have $e\left(\mathcal{R}_{\GL_2}^{\mu^1}(\Gamma) / (\CC^*)^{2}\right) = (q-1)^{2d}$. On the other hand, the space $\mathcal{R}_{\GL_2}^{\mu^2}(\Gamma)$ is the collection of matrices $(A_1, \ldots, A_d)$ of the form
$$
	A_k = \begin{pmatrix} a_{11}^k & a_{12}^k \\ a_{21}^k & a_{22}^k\end{pmatrix}
$$
with $a_{11}^ka_{22}^k-a_{12}^ka_{21}^k \neq 0$ for all $k = 1, \ldots, d$ and the vectors $(a_{12}^1, a_{12}^2, \ldots, a_{12}^d), (a_{21}^1, a_{21}^2, \ldots, a_{21}^d) \in \CC^m$ do not vanish.

It is easier to study the complement $\mathcal{R}_{\GL_2}(\Gamma) - \mathcal{R}_{\GL_2}^{\mu^2}(\Gamma)$ of this space. This is the collection of the same tuples of matrices, but now satisfying that $(a_{12}^1, a_{12}^2, \ldots, a_{12}^d)$ or $(a_{21}^1, a_{21}^2, \ldots, a_{21}^d)$ are identically zero. In the case that $(a_{12}^1, a_{12}^2, \ldots, a_{12}^d) = (0, \ldots, 0)$, since $a_{11}^ka_{22}^k \neq 0$ for all $k$, we get a contribution of $(q-1)^{2d}q^d$ and analogously for $(a_{21}^1, a_{21}^2, \ldots, a_{21}^d) = (0, \ldots, 0)$. Substracting the common locus in which both vectors vanish, which counts as $(q-1)^{2d}$, and taking into account that $e(\mathcal{R}_{\GL_2}(\Gamma)) = e(\GL_2)^{d} = (q^2+q)^d(q-1)^{2d}$, we get that
$$
	e\left(\mathcal{R}_{\GL_2}^{\mu^2}(\Gamma)\right) = (q^2+q)^d(q-1)^{2d} - \left(2(q-1)^{2d}q^d - (q-1)^{2d}\right) = (q-1)^{2d}((q^2+q)^d - 2q^d +1).
$$

Therefore, the final count is 
\begin{equation}\label{eq:e-pol-11-2}
	e(\mathcal{X}_{(1,1)}(\Free{d})) = (q-1)^{2d} +  (q-1)^{2d-1}((q^2+q)^d - 2q^d +1).
\end{equation}

It is worth mentioning that this calculation can be done by hand in the case $d = 1$, i.e.\ $\Gamma = \ZZ$, and it agrees with the previous calculation. In this case, $\mathcal{R}_{\GL_2}(\ZZ) = \{(a, b, c, d) \in \CC^4 \,|\, ad-bc \neq 0\}$ and the action of $(\CC^*)^2$ is $(t_1, t_2) \cdot (a,b,c,d) \mapsto (a, t_1t_2^{-1}b, t_1^{-1}t_2c, d)$. Hence, the quotient space is given by setting the new variable $x = bc$ and thus $\mathcal{X}_{(1,1)}(\ZZ) = \{(a, x, d) \in \CC^3 \,|\, ad-x \neq 0\}$. Therefore, we get
$$
	\mathcal{X}_{(1,1)}(\ZZ) = \CC^3 - \{(a, x, d) \in \CC^3 \,|\, ad= x\} \cong \CC^3 - \CC^2,
$$
and thus $e(\mathcal{X}_{(1,1)}(\ZZ)) = q^3-q^2$. This agrees with (\ref{eq:e-pol-11-2}) for $d=1$.
\end{ex}

\begin{rem}
The previous calculation agrees with the one of $e(\cW_{H_2,\SL_2})$, as computed in Theorem \ref{thm:e-poly-sl2}. 
\end{rem}

\begin{ex}\label{ex:X11-Z2}
Extending the calculation of Example \ref{ex:totally-action-sl2}, we can even compute equivariant $E$-polynomials. Take $\Gamma = \Free{d}$, the free group in $d$ generators, and consider the action of $\ZZ_2$ on $\mathcal{R}_{\GL_2}(\Gamma)$ by simultaneous permutation of columns and rows, which descends to an action on $\mathcal{X}_{(1,1)}(\Gamma) = \mathcal{R}_{\GL_2}(\Gamma) / (\CC^*)^2$. Equivalently, this is the action by conjugation by the matrix $\scriptsize \begin{pmatrix}0 & 1 \\ 1 & 0\end{pmatrix}$.

First of all, recall that if we consider the action of $\ZZ_2$ on $X = (\CC^*)^2$ by permutation, then we have that $e(X)^+ = e(X/\ZZ_2) = q^2-q$, given by the trace and determinant of the diagonal matrix, and $e(X)^- = e(\CC^*)^2 - e(X/\ZZ_2) = 1-q$. Hence, the equivariant $E$-polynomial is given by
$$
	e_{\ZZ_2}(X) = e(X)^+\,T + e(X)^-\,N = (q^2-q)\,T + (1-q)\,N.
$$
In this way, we get that
\begin{align*}
	e_{\ZZ_2}((\CC^*)^{2d}) &= e_{\ZZ_2}(X)^{\otimes d} = \frac{1}{2}\left(e(X)^d + (e(X)^+ - e(X)^-)^d\right)\,T + \frac{1}{2}\left(e(X)^d - (e(X)^+ - e(X)^-)^d\right)\,N \\
	& = \frac{1}{2} \left((q-1)^{2d} + (q^2-1)^d\right)\,T + \frac{1}{2} \left((q-1)^{2d} - (q^2-1)^d\right)\,N.
\end{align*}

Now, let us come back to our calculation of the equivariant $\ZZ_2$-polynomial of $\mathcal{X}_{(1,1)}(\Gamma)$. This action is preserved under the decomposition $\mathcal{X}_{(1,1)}(\Gamma) = \mathcal{R}_{\GL_2}^{\mu^1}(\Gamma) / (\CC^*)^2 \sqcup \mathcal{R}_{\GL_2}^{\mu^2}(\Gamma) / \CC^*$. For the action on $\mathcal{R}_{\GL_2}^{\mu^1}(\Gamma) / (\CC^*)^2 = \mathcal{R}_{\GL_2}^{\mu^1}(\Gamma) = (\CC^*)^{2d}$, we directly get that $e_{\ZZ_2}(\mathcal{R}_{\GL_2}^{\mu^1}(\Gamma) / (\CC^*)^2) = e_{\ZZ_2}((\CC^*)^{2d})$ as computed above. For $\mathcal{R}_{\GL_2}^{\mu^2}(\Gamma)$, as above it is simpler to study its complement $Y = \mathcal{R}_{\GL_2}(\Gamma) - \mathcal{R}_{\GL_2}^{\mu^2}(\Gamma)$, which are the matrices with  $(a_{12}^1, a_{12}^2, \ldots, a_{12}^d)$ or $(a_{21}^1, a_{21}^2, \ldots, a_{21}^d)$ identically zero. In the subspace $Y_1 \subset Y$ where only one of these vectors is zero, there is a unique representative of the orbit which is upper triangular, and thus we get $e(Y_1)^+ = e(Y_1)^- = e(Y)/2 = (q-1)^{2d}(2q^d - 2)/2$ so
$$
	e_{\ZZ_2}(Y_1) = (q-1)^{2d}(q^{d}-1)\,T + (q-1)^{2d}(q^{d}-1)\,N. 
$$
The remaining part $Y_2$ is made of $d$ diagonal matrices and thus $e_{\ZZ_2}(Y_2) = e_{\ZZ_2}((\CC^*)^{2d})$. Taking into account that $e_{\ZZ_2}(\mathcal{R}_{\GL_2}(\Gamma)) = e(\mathcal{R}_{\GL_2}(\Gamma))\,T =  (q^2+q)^d(q-1)^{2d}\,T$, since $\ZZ_2$ acts trivially in cohomology because the $\ZZ_2$-action can be extended to a $\GL_2$-action, then we have
\begin{align*}
	e_{\ZZ_2}(\mathcal{R}_{\GL_2}^{\mu^2}(\Gamma)) & = e_{\ZZ_2}(\mathcal{R}_{\GL_2}(\Gamma)) - e_{\ZZ_2}(Y_1) - e_{\ZZ_2}(Y_2) \\
	&= (q-1)^{2d}((q^2+q)^d - q^d +1) \,T - (q-1)^{2d}(q^{d}-1)\,N - e_{\ZZ_2}((\CC^*)^{2d}).
\end{align*}

Now, observe that since the action of $\CC^*$ on $\mathcal{R}_{\GL_2}^{\mu^2}(\Gamma) $ is free, we have a $\ZZ_2$-equivariant $\CC^*$-principal bundle
$$
	\CC^* \to \mathcal{R}_{\GL_2}^{\mu^2}(\Gamma) \to \mathcal{R}_{\GL_2}^{\mu^2}(\Gamma) / \CC^*.
$$
Therefore, we have $e_{\ZZ_2}(\mathcal{R}_{\GL_2}^{\mu^2}(\Gamma)) = e_{\ZZ_2}(\mathcal{R}_{\GL_2}^{\mu^2}(\Gamma) / \CC^*)e_{\ZZ_2}(\CC^*)$. The action of $\ZZ_2$ on $\CC^*$ is given by $\lambda \mapsto \lambda^{-1}$ and thus $e_{\ZZ_2}(\CC^*) = qT - N$. Hence, using (\ref{eqn:pm-formula-fiber-forbase}), we get that
$$
	e_{\ZZ_2}(\mathcal{R}_{\GL_2}^{\mu^2}(\Gamma) / \CC^*) = \frac{q \, e(\mathcal{R}_{\GL_2}^{\mu^2}(\Gamma))^+ + e(\mathcal{R}_{\GL_2}^{\mu^2}(\Gamma))^-}{q^2-1}T + \frac{q\, e(\mathcal{R}_{\GL_2}^{\mu^2}(\Gamma))^- + e(\mathcal{R}_{\GL_2}^{\mu^2}(\Gamma))^+}{q^2-1}N.
$$

Putting all together, we finally find that
\begin{align*}
	e_{\ZZ_2}(\mathcal{X}_{(1,1)}(\Gamma)) = & \, e_{\ZZ_2}\left(\mathcal{R}_{\GL_2}^{\mu^1}(\Gamma) / (\CC^*)^2\right) +  e_{\ZZ_2}(\mathcal{R}_{\GL_2}^{\mu^2}(\Gamma) / \CC^*)  \\
	= & \, \left({q^{d+1}\left(q + 1\right)}^{d-1} {\left(q - 1\right)}^{2  d-1} - q^{d} {\left(q - 1\right)}^{2  d-1} + \frac{1}{2} \left(q{\left(q - 1\right)}^{2  d-1}  + {\left(q^{2} - q\right)} {\left(q^{2} - 1\right)}^{d-1}\right)\right)\,T \\
	& \, + \left( q^d{\left(q + 1\right)}^{d-1} {\left(q - 1\right)}^{2  d-1} -  q^{d}{{\left(q - 1\right)}^{2  d-1}} + \frac{1}{2} \left({q\left(q - 1\right)}^{2d-1} - {\left(q^{2} - q\right)} {\left(q^{2} - 1\right)}^{d-1}\right)\right)\,N.
\end{align*}
\end{ex}

\begin{ex} \label{ex:X111}
In the case $n = 3$ and $\Gamma = \Free{d}$, the possible partitions are $\mu^1 = \{\{1\}, \{2\}, \{3\}\}, \mu^2 = \{\{1,2\}, \{3\}\}, \mu^3 = \{\{1,3\}, \{2\}\}, \mu^4 = \{\{2, 3\}, \{1\}\}$ and $\mu^5 = \{\{1,2,3\}\}$, with $\mathcal{P}_0 = \{\mu^5\}$. Hence, we have
\begin{equation}\label{eq:X111}
 e(\mathcal{X}_{(1,1,1)}(\Gamma)) = \sum_{i=1}^4 e\left(\mathcal{R}_{\GL_3}^{\mu^i}(\Gamma) / (\CC^*)^3\right) +  \frac{1}{(q-1)^2} e(\mathcal{R}_{\GL_3}^{\mu^5}(\Gamma)).
\end{equation}
We count each stratum separately:
\begin{itemize}
	\item For $\mu^1 = \{\{1\}, \{2\}, \{3\}\}$, we have $\mathcal{R}_{\GL_3}^{\mu^1}(\Gamma) = (\CC^*)^{3d}$ with the trivial action, so it contributes with $(q-1)^{3d}$.
	\item For $\mu^2 = \{\{1,2\}, \{3\}\}$, we have $\mathcal{R}_{\GL_3}^{\mu^2}(\Gamma) / (\CC^*)^3 = {(\CC^*)^{d}} \times \mathcal{R}_{\GL_2}^{\{\{1,2\}\}}(\Gamma) / (\CC^*)^2$. By Example \ref{ex:totally-action-sl2}, this contributes as $(q-1)^{{{3d-1}\color{black}}}((q^2+q)^d - 2q^d +1)$. The cases $\mu^3 = \{\{1,3\}, \{2\}\}$ and $\mu^4 = \{\{2, 3\}, \{1\}\}$ are completely analogous.
	
	\item For $\mu^5 = \{\{1,2,3\}\}$, it is easier to compute its complement in $\mathcal{R}_{\GL_3}(\Gamma)$. It is given by the set of $d$ matrices $(a_{ij}^k)$ for $1 \leq i,j \leq 3$ and $1 \leq k \leq d$ such that at least one of the vectors $(a_{21}^k, a_{31}^k), (a_{12}^k, a_{13}^k), (a_{12}^k, a_{32}^k), (a_{21}^k, a_{23}^k), (a_{13}^k, a_{23}^k)$ or $(a_{31}^k, a_{32}^k)$ of $\CC^{2d}$ vanishes.
We will stratify into disjoint subsets as follows:
 \begin{itemize} 
 \item Exactly one of these vector vanishes, let us say $(a_{21}^k,a_{31}^k)=0$. This implies that, at the same time, the corresponding $2 \times 2$ minor
 has non-zero off-diagonal entries $(a_{23}^k)\neq 0, (a_{32}^k)\neq 0$ and the vector $(a_{12}^k, a_{13}^k) \neq 0$. Therefore, it has the form
 $$
(A_1, \ldots, A_d) = \left(\begin{pmatrix}a_{11}^1 & a_{12}^1 & a_{13}^1 \\ 0 & a_{22}^1 & a_{23}^1 \\ 0 & a_{32}^1 & a_{33}^1 \end{pmatrix}, 
\ldots, 				\begin{pmatrix}a_{11}^d & a_{12}^d & a_{13}^d \\ 0 & a_{22}^d & a_{23}^d \\ 0 & a_{32}^d & a_{33}^d \end{pmatrix}
\right).
$$
These correspond to non-splitting reducible representations that decompose into a one-dimensional representation and an irreducible two-dimensional one. There are $6$ disjoint copies of these as the row or column with zeros is well-determined. Therefore the $E$-polynomial is $6(q-1)^d (q^{2d}-1)e(S_d)$, where $S_d$ are the matrices in $\GL_2^d$ with
$(a_{23}^k)\neq 0, (a_{32}^k)\neq 0$.
Then
$e(S_d)= e(\GL_2)^d - (q-1)^{2d} (2q^d-1)$. All together, the contribution is
 \begin{align*}
 6(q-1)^d (q^{2d}-1)  & \left( q^d(q+1)^d(q-1)^{2d} - (q-1)^{2d} (2q^d-1) \right) \\
 &= 6(q-1)^{3d} (q^{2d}-1) ( q^{d}(q+1)^d-2q^d+1).
  \end{align*}
  
\item Exactly one horizontal and one vertical vector vanish, and are in the same location, let us say 
 $$
(A_1, \ldots, A_d) = \left(\begin{pmatrix}a_{11}^1 & 0 & 0 \\ 0 & a_{22}^1 & a_{23}^1 \\ 0 & a_{32}^1 & a_{33}^1 \end{pmatrix}, 
\ldots, 				\begin{pmatrix}a_{11}^d & 0 & 0 \\ 0 & a_{22}^d & a_{23}^d \\ 0 & a_{32}^d & a_{33}^d \end{pmatrix}
\right).
$$
As above, the off-diagonal entries of the minor cannot vanish so $(a_{23}^k)\neq 0, (a_{32}^k)\neq 0$. These correspond to reducible representations with a direct sum decomposition into a two-dimensional and a one-dimensional representation. There are $3$ copies of these. The $E$-polynomial is $3(q-1)^{3d} ( q^{d}(q+1)^d-2q^d+1)$.
 
\item Exactly one of the horizontal and one of the vertical vectors both vanish, and are in different locations, let us say 
 $$
(A_1, \ldots, A_d) = \left(\begin{pmatrix}a_{11}^1 & a_{12}^1 & a_{13}^1 \\ 0 & a_{22}^1 & a_{23}^1 \\ 0 & 0 & a_{33}^1 \end{pmatrix}, 
\ldots, 				\begin{pmatrix}a_{11}^d & a_{12}^d & a_{13}^d \\ 0 & a_{22}^d & a_{23}^d \\ 0 & 0 & a_{33}^d \end{pmatrix}
\right).
$$
where $(a_{12}^k)\neq 0$ and $(a_{23}^k)\neq 0$. These correspond to reducible representations with a full flag, but not splitting off a summand
(i.e.\ just one eigenvector). There are $6$ copies of these. The $E$-polynomial is $6 (q-1)^{3d} (q^d-1)^2 q^d$.

\item Exactly two vertical vectors vanish, but not the three, and also the corresponding case for two horizontal vectors, let us say
 $$
(A_1, \ldots, A_d) = \left(\begin{pmatrix}a_{11}^1 & 0 & a_{13}^1 \\ 0 & a_{22}^1 & a_{23}^1 \\ 0 & 0 & a_{33}^1 \end{pmatrix}, 
\ldots, 				\begin{pmatrix}a_{11}^d & 0 & a_{13}^d \\ 0 & a_{22}^d & a_{23}^d \\ 0 & 0 & a_{33}^d \end{pmatrix}
\right),
$$
with $(a_{13}^k)\neq 0, (a_{23}^k)\neq 0$. There are $6$ disjoint strata of these.
The $E$-polynomial is $ 6 (q-1)^{3d} (q^d-1)^2$.

\item Two vertical vectors vanish and two horizontal vectors vanish at the same time, but not the three. Let us say
 $$
(A_1, \ldots, A_d) = \left(\begin{pmatrix}a_{11}^1 & 0 & 0\\ 0 & a_{22}^1 & a_{23}^1 \\ 0 & 0 & a_{33}^1 \end{pmatrix}, 
\ldots, 				\begin{pmatrix}a_{11}^d & 0 &0\\ 0 & a_{22}^d & a_{23}^d \\ 0 & 0 & a_{33}^d \end{pmatrix}
\right),
$$
with $(a_{23}^k) \neq 0$. There are $6$ positions for the non-zero entry, hence we get $6 (q-1)^{3d}(q^d-1)$.

\item Diagonal matrices, which give $(q-1)^{3d}$.

\end{itemize}
Putting all together, we finally get that the complement of  $\mathcal{R}_{\GL_3}^{\mu^5}(\Gamma)$ has $E$-polynomial
$$	
(q-1)^{3d} \big( 3 (q + 1)^{d} (2 q^{3 d} - q^{d}) - 6 q^{3 d} + 6 q^{d} - 2 \big),
 $$
and thus
$$
	\mathcal{R}_{\GL_3}^{\mu^5}(\Gamma) = (q-1)^{3d}\left((q^2+q+1)^d(q+1)^dq^{3d}-3(q+1)^d(2q^{3d}-q^d)+6q^d(q^{2d}-1)+2\right).
$$
\end{itemize}

Therefore, plugging these calculations into (\ref{eq:X111}), we finally get
\begin{align*}
	e(\mathcal{X}_{(1,1,1)}(\Gamma)) & = (q-1)^{3d}+3(q-1)^{3d-1}((q^2+q)^d-2q^d+1) \\ 
	& \phantom{=} +(q-1)^{3d-2}\left((q^2+q+1)^d(q+1)^dq^{3d}-3(q+1)^d(2q^{3d}-q^d)+6q^d(q^{2d}-1)+2\right).
\end{align*}

\end{ex}

%%%%%%%%%%%%%%%%%%%%%%%%%%%%%%%%%%%%%%%%%%%%%%
\section{$\SL_3(\CC)$-character varieties of torus links} \label{sec:sl3charactervariety}
%%%%%%%%%%%%%%%%%%%%%%%%%%%%%%%%%%%%%%%%%%%%%%%%%%%%%

We turn now to the case $G=\SL_3(\CC)$, shortened as $\SL_3$ to ease notation any time that is needed. Let us write
\begin{align}
H_1 & := \bm{\mu}_3 \Id =\lbrace \xi \Id |\,  \xi \in \bm{\mu}_3 \rbrace, \nonumber \\ 
H_2 & := \left\lbrace \begin{pmatrix} \lambda \Id_2 & 0 \\ 0 & \lambda^{-2} \end{pmatrix} | \, \lambda \in \mathbb{C}^{\ast} \right\rbrace \cong \CC^{\ast}, \label{def:subgroupsSL3}\\
H_3 & := \left\lbrace \diag(\lambda_1,\lambda_2,\lambda_3) |\, \lambda_1 \lambda_2 \lambda_3 =1 \right\rbrace \cong (\CC^{\ast})^2, \nonumber \\
H_4 & := \left\lbrace \begin{pmatrix} P & 0 \\ 0 & \det(P)^{-1} \end{pmatrix}  |\,  P\in \GL_2(\CC) \right\rbrace \cong \GL_2(\CC) \nonumber,
\end{align}
which are the stabilizers by conjugation in $\SL_3(\CC)$ of $\GL_3(\CC)$, $\GL_2(\CC)\times \GL_1(\CC)$, $\GL_1(\CC)\times \GL_1(\CC)\times \GL_1(\CC)$ and $\bm{\mu}_2\Id_2\times \GL_1(\CC)$, respectively. 
Besides, computing their normalizers, we note that the only relevant combinations with non-connected symmetry group are $N_{H_3, \SL_3} = N_{H_3, H_3} \cong S_3$ and $N_{H_3, H_4} = \ZZ_2$.

When $d=1$, the character varieties of torus knot groups were described in \cite{munozporti:2016}. We refine here that description in terms of stabilizers, indicating that out of the 15 possible strata $V_{H_i,H_j}$, with $H_i < H_j$, irreducibility reduces the total count to 9 non-empty strata. The character variety $\X(n,m)$ can be stratified as:

\begin{itemize}
\item $\X(n,m)^{\ast}$, the stratum of irreducible representations, which is made of:
\begin{itemize}
\item[*] $\frac{1}{12}(m-1)(m-2)(n-1)(n-2)$ components of dimension $4$.
\item[*] $\frac{1}{2}(n-1)(m-1)(n+m-4)$ components isomorphic to $(\CC^{\ast})^2-\lbrace x+y=1 \rbrace$.
\end{itemize}
Irreducibility forces again that the stabilizer of any of these representations is a multiple of the identity, and so is $A^n=B^m$, hence $\Stab(A^n)=\SL_3$. We get $\X(n,m)^{\ast}=\cV_{H_1,\SL_3}$.

\item $[\frac{n-1}{2}][\frac{m-1}{2}]$ components of partially reducible representations into one representation of rank $2$ and another of rank $1$. Each of these components is isomorphic to $(\CC-\lbrace 0,1 \rbrace) \times \CC^{\ast}$. With respect to a certain basis, each representation of this stratum can be written as
$$
A = \begin{pmatrix} A' & 0 \\ 0 & \det(A')^{-1} \end{pmatrix}, \qquad B= \begin{pmatrix} B' & 0 \\ 0 & \det(B')^{-1} \end{pmatrix},
$$
where $(A',B')$ is an irreducible $\GL_2(\CC)$-representation of the torus knot, hence the stabilizer is  $H_2$.  Let us assume first that $n,m$ are both odd.

There are two cases:
\begin{enumerate}

\item $\cV_{H_2, \SL_3}$, when $A^n=B^m\in H_1$, hence $(A')^n=(B')^m\in \bm{\mu}_3\Id_2$, and  the determinant representation $(\det A', \det B') \in \X_{\GL_1}(n,m)$ is parametrized by a unique $ t\in \bm{\mu}_{3mn}$.  From the description in \cite{munozporti:2016}, equivalence classes of irreducible $\GL_2(\CC)$-representations can be described as $ \left( \X_{\SL_2}(n,m)^{\ast}\times \CC^{\ast} \right) / \bm{\mu}_2$, where the $\CC^{\ast}$-factor, that corresponds to the determinant representation, takes into account the $\CC^{\ast}$-action $\gamma (A',B')=(\gamma^m A',\gamma^n B')$ on $\X_{\GL_2}(n,m)$, with kernel $\bm{\mu}_2$. In this case, since $(\det A', \det B')\in \bm{\mu}_{3mn}$, we obtain
\begin{equation}
\cV_{H_2,\SL_3} \cong \left( \X_{\SL_2}(n,m)^{\ast}\times \bm{\mu}_{3mn} \right) / \bm{\mu}_2 \label{eqn:VH2SL3}
\end{equation}
The $\bm{\mu}_2$-action on $\X_{\SL_2}(n,m)$ interchanges components of $\X_{\SL_2}(n,m)^{\ast}$.

\item $\cV_{H_2, H_4}$, if $A^n=B^m \not \in H_1$. Since the $\GL_2(\CC)$-representation is irreducible, $(A')^n=(B')^m$ is a multiple of the identity different from $\bm{\mu}_{3mn}$, so the stabilizer of $A^n=B^m$ is $H_4$. The same description yields
\begin{equation} \label{eqn:VH2H4}
\cV_{H_2,H_4} \cong \left( \X_{\SL_2}(n,m)^{\ast}\times (\CC^{\ast}-\bm{\mu}_{3mn}) \right) / \bm{\mu}_2\, .
\end{equation}

\end{enumerate}

When $n$ is even, $\bm{\mu}_2 \subset \bm{\mu}_{3mn}$, so there are $(m-1)/2$ components of partially reducible representations in $\X_{\SL_2}(n,m)^{\ast}$ that remain fixed by $\bm{\mu}_2$, hence they also show up in $\cV_{H_2,\SL_3}$ and $\cV_{H_2,H_4}$. They are isomorphic to $\lbrace (u,v)\in \CC^2 |\, v\neq 0, v\neq u^2 \rbrace$, c.f.\ \cite{munozporti:2016}.

\item $\X(n,m)^{\textrm{TR}}$, the stratum of totally reducible representations, i.e.\ representations that decompose into three $1$-dimensional (irreducible) representations. It is isomorphic to $\CC^2$ via the coefficients of the characteristic polynomial of a common root of $A$ and $B$. More precisely, given $(A,B)\in \X(n,m)^{\textrm{TR}}$, there exists $t_i\in \CC^{\ast}, i=1,2,3,$ such that
$$A=\diag(t_1^m,t_2^m,t_3^m),\: B=\diag(t_1^n,t_2^n,t_3^n),$$
where $t_1t_2t_3=1$. The space of such triples $(t_1, t_2, t_3)$, without the determinant condition, is $\Sym^3(\CC^{\ast})$, which is isomorphic to $\CC^2\times \CC^{\ast}$ via the coefficients of the characteristic polynomial. But here the determinant condition forces the last factor to be equal to $1$, hence we get $\CC^2$.

Besides, total reducibility implies that we may assume that $(A,B)$ are simultaneously diagonal with respect to a certain basis. We distinguish the following subcases:
\begin{enumerate}
\item If $A,B \in H_1$, all stabilizers are equal to $\SL_3$. It corresponds to the case $\cV_{\SL_3,\SL_3}=\tilde{\cV}_{\SL_3,\SL_3}$, which consists of three points inside $\X(n,m)^{{\textrm{TR}}}\cong \CC^2$.
\item If $A,B \in H_2$ but $A \not\in H_1$ or $B \not\in H_1$, then both $A$ and $B$ are simultaneously diagonalizable and either $A$ or $B$ have a repeated eigenvalue which does not belong to $\bm{\mu}_3$. This corresponds to the cases $\lbrace t_i=t_j\neq t_k \rbrace$ inside $\X(n,m)^{{\textrm{TR}}}$, which are equivalent by the $S_3$-action, so that we may fix $t_1=t_2\neq t_3$. The stabilizer of these representations $(A,B)$ is thus $H_4$. There are two possibilities:
\begin{itemize}
\item[$\star$] $A^n=B^m\in H_1$, which corresponds to $\cV_{H_4,\SL_3}$. In this case, the repeated eigenvalue satisfies $\lambda^n=\mu^m=\xi \in \bm{\mu}_3$, so that $(\lambda,\mu)\in \bm{\mu}_{3n}\times\bm{\mu}_{3m} - \Delta$, where $\Delta=\lbrace (\xi,\xi)|\, \xi \in \bm{\mu}_3 \rbrace \subset \bm{\mu}_{3n}\times\bm{\mu}_{3m}.$ There is a unique  $t\in \CC^{\ast}$ that satisfies that $t^m=\lambda, t^n=\mu$, so that $t\in \bm{\mu}_{3mn}- \bm{\mu}_3$ and the component consists of $3mn-3$ points.
\item[$\star$] $A^n=B^m \in H_2-H_1$, that is, $\cV_{H_4, H_4}$. The same discussion yields that now $t \in \CC^{\ast} - \bm{\mu}_{3mn}$. The stratum is isomorphic to $\CC^{\ast}-\bm{\mu}_{3mn}$.
\end{itemize} 
\item If $A,B \in H_3$ but $A \not\in H_2$ or $B \not\in H_2$. In this case, either $A$ or $B$ has three different eigenvalues, so that this case corresponds to $t_1\neq t_2 \neq t_3 \in \Sym^3(\CC^{\ast})$. In this case, the stabilizer of $(A, B)$ is $H_3$ and the action of $N_{H_3} = S_3$ is free. Again, we get three possibilities depending on where $A^n=B^m$ belongs:

\begin{itemize}
\item[$\star$] $A^n=B^m\in H_1$, which corresponds to $\cV_{H_3,\SL_3}$. These points include those lying in the closure of $\X(n,m)^{\ast}$ in $\X(n,m)^{\mathrm{TR}}$. Since $t_1^{mn}=t_2^{mn}=t_3^{mn}\in \bm{\mu}_3$, we need to count points belonging to (here $t_2=t_1\epsilon_1$, $t_3=t_1\epsilon_2$)
$$
\hspace*{3cm}
V^0_{H_3,\SL_3}  \cong \lbrace (t_1,\epsilon_1,\epsilon_2) \in \CC^{\ast}\times (\bm{\mu}_{mn})^2 \mid t_1^3\epsilon_1\epsilon_2=1, \epsilon_1\neq 1, \epsilon_2 \neq 1, \epsilon_1 \neq \epsilon_2 \rbrace.
$$

Using the inclusion-exclusion principle, we get a total of $3m^2n^2-9mn+6$ points in $V^0_{H_3,\SL_3}$, which get identified under the $S_3$-action, 
leaving $\frac12 (m^2n^2-3mn+2)$ points in $\cV_{H_3,\SL_3} = V^0_{H_3,\SL_3}  / N_{H_3}$  (see \cite{gonmun:2022}).

\item[$\star$] $A^n=B^m \in H_2-H_1$, that is, $\cV_{H_3, H_4}$. There exists $i,j\in \lbrace 1,2,3 \rbrace$ such that $t_i\neq t_j$, but $t_i^{mn}=t_j^{mn}=\lambda \in \CC^{\ast}$, so that $t_i=\epsilon t_j$, $\epsilon \in \bm{\mu}_{mn}^{\ast}$. The space can be described as
$$
\hspace*{3.4cm}
V^0_{H_3,H_4}=\lbrace (t_1,t_2,\epsilon)\in (\CC^{\ast})^2 \times \bm{\mu}_{mn} \mid  t_1^2 t_2 \epsilon = 1, t_2 \neq t_1, t_2\neq t_1 \varepsilon \:\: \forall \varepsilon \in \bm{\mu}^{\ast}_{mn} \rbrace.
$$
The $S_3$-action reduces to a $\ZZ_2$-action that takes $(t_1,t_2,\epsilon)\mapsto (t_1\epsilon,t_2,\epsilon^{-1})$. It is a collection of punctured lines that get identified depending on the value of $\epsilon$. Its equivariant polynomial was computed in \cite{gonmun:2022} and it is given by
$$
e(V^0_{H_3,H_4}) = \left( \left\lfloor \frac{mn}{2} \right\rfloor (q-1)- \frac{3mn(mn-1)}{2} \right) T + \left( \left\lfloor \frac{mn-1}{2} \right\rfloor (q-1)- \frac{3mn(mn-1)}{2} \right) N.
$$

\item[$\star$] $A^n=B^m \in H_3-H_2$, that is, $\cV_{H_3,H_3}$. Either $A$ or $B$ has three different eigenvalues and so does $A^n$. We get all the remaining cases in $\Sym^3(\CC^{\ast})$ that are left when we exclude $\cV^0_{H_3,\SL_3}$ and $\cV^0_{H_3,H_4}$, so $t_1\neq t_2 \neq t_3$. That is,
$$
\hspace*{2.4cm}
V^0_{H_3,H_3}=\lbrace \diag(t_1,t_2,t_3) \mid t_1\neq t_2\neq t_3, t_1t_2t_3=1, t_i^{mn}\neq t_j^{mn}, i\neq j \rbrace.
$$
\end{itemize}
\end{enumerate}
\end{itemize}

This description of $\X(n,m)$ for $\SL_3(\CC)$ leads to a stratification of $\X^d(n,m)$, due to Section \ref{sec:representationstrata} and Theorem \ref{eq:iso-w}, from where we recall the isomorphism
\begin{equation}
	\mathcal{W}_{H_1, H_2} := W_{H_1, H_2} \sslash G \cong \left(V_{H_1, H_2}^0 \times H_2^{d-1} \sslash H_1 \right) \sslash N_{H_1}.
\end{equation}

We get nine corresponding strata $\cW_{H_i,H_j}$ for $\X^d(n,m)$, which we enumerate:
\begin{itemize}
\item $e(\cW_{H_1,\SL_3})=e(\pi^{-1}(\X^{\ast}(n,m)))=e(\X(n,m)^*) e(\sltres)^{d-1}$, due to Corollary \ref{cor:irreduciblelocus}.
\item $e(\cW_{H_2,\SL_3})=e\left(\left( \X_{\SL_2}(n,m)^{\ast}\times \bm{\mu}_{3mn} \right) / \bm{\mu}_2\right) e\left( \sltres^{d-1} \sslash H_2 \right)$, by the description of $V^0_{H_2,\SL_3}$ from \eqref{eqn:VH2SL3}. 
\item $e(\cW_{H_2,H_4}) = e\left(\left( \X_{\SL_2}(n,m)^{\ast}\times (\CC^{\ast}-\bm{\mu}_{3mn}) \right) / \bm{\mu}_2\right) e\left( H_4^{d-1}\sslash H_2 \right)$, given the description of $V^0_{H_2,H_4}$ from \eqref{eqn:VH2H4}.

\item $e(\cW_{\SL_3,\SL_3})=e(V^0_{\SL_3,\SL_3}) e\left( \sltres^{d-1} \sslash \sltres \right)$. 
Since $V^0_{\SL_3,\SL_3}$ consists of three points, we obtain three copies of the $\sltres$-character variety of the free group in $d-1$ generators.

\item 
$e(\cW_{H_4,\SL_3}) = e(V^0_{H_4,\SL_3}) e\left( \sltres^{d-1} \sslash H_4 \right)$. The space is a collection of $3mn-3$ copies of $\sltres^{d-1} \sslash H_4$.

\item $e(\cW_{H_4,H_4})=e(V^0_{H_4,H_4}) e(H_4^{d-1} \sslash H_4) =e\left( \CC^{\ast}-\bm{\mu}_{3mn} \right) e(H_4^{d-1} \sslash H_4)$.

\item $\cW_{H_3,\SL_3} = \widetilde{W}{}_{H_3,\SL_3}/S_3 \cong (V^0_{H_3,\SL_3} \times \sltres^{d-1}\sslash H_3)/S_3$. Since $V^0_{H_3,\SL_3}$ is a collection of points, the space is a discrete set of components isomorphic to $\sltres^{d-1} \sslash H_3$ that get identified under the $S_3$-action.

\item $\cW_{H_3,H_4} = (V^0_{H_3,H_4}\times H_4^{d-1}\sslash H_3)/\ZZ_2$. In this case, $V^0_{H_3,H_4}$ is a collection of punctured copies of $\CC^{\ast}$, with a residual action of $\ZZ_2$.

\item $\cW_{H_3,H_3}$. Now $A$ or $B$ are simultaneously diagonalizable and one of them has three different eigenvalues, and this is also the case for $A^n$. Again, $N_{H_3}=S_3$, so we get that
$$
\cW_{H_3,H_3}=(V^0_{H_3,H_3}\times H_3^{d-1})/ S_3,
$$
taking into account that the action by conjugation of $H_3$ on $H_3$ is trivial, thus $H_3^{d-1}\sslash H_3 \cong H_3^{d-1}$.     

\end{itemize}

%%%%%%%%%%%%%%%%%%%%%%%%%%%%%%%%%%%%%%%%%%%
\section{GIT quotients of $\SL_3(\CC)$}\label{sec:git-quotients}
%%%%%%%%%%%%%%%%%%%%%%%%%%%%%%%%%%%%%%%%%%%

We begin this section by computing the $E$-polynomials of some quotients of $\sltres^{d-1}$ by the subgroups $H_i$ that were defined in \eqref{def:subgroupsSL3}. In all cases, the action of $P\in H_i$ is given by simultaneous conjugation, taking $(A_1,\ldots,A_{d-1}) \mapsto (PA_1P^{-1},\ldots,PA_{d-1}P^{-1})$.

\begin{prop} 
We have
\begin{equation*} \label{eqn:epolySL3H2}
e(\sltres^{d-1}\sslash H_2) = (q^3-q)^{d-1}\left(q^{3d-3}(q+1)^{d-1}(q-1)^{d-2} -(2q^{2d-2}-1)(q-1)^{d-2}+(q-1)^{d-1} \right).
\end{equation*}
\end{prop}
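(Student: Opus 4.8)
The plan is to describe the $H_2$-action on $\sltres^{d-1}$ explicitly, split $\sltres^{d-1}$ into the fixed locus of the action and the locus on which it is free, and assemble the two contributions. Write $H_2 = \{\diag(\lambda,\lambda,\lambda^{-2}) : \lambda \in \CC^*\}$ and decompose $A \in \sltres$ into blocks of sizes $2$ and $1$, $A = \left(\begin{smallmatrix} P & b \\ c & s \end{smallmatrix}\right)$. Conjugation by $\diag(\lambda,\lambda,\lambda^{-2})$ fixes $P$ and $s$, multiplies the column $b \in \CC^2$ by $\lambda^3$ and the row $c \in \CC^2$ by $\lambda^{-3}$, so the action factors through $t := \lambda^3$ (the kernel $\bm{\mu}_3\Id$ acting trivially by conjugation in any case). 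Collecting the off-diagonal blocks of a tuple $(A_1,\dots,A_{d-1})$ into $\mathbf b = (b_1,\dots,b_{d-1}) \in \CC^{2(d-1)}$ and $\mathbf c = (c_1,\dots,c_{d-1}) \in \CC^{2(d-1)}$, the group $\CC^* \cong H_2$ scales $\mathbf b$ by $t$, scales $\mathbf c$ by $t^{-1}$, and fixes everything else.

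First I would analyze the GIT quotient directly. The fixed locus $\{\mathbf b = 0,\ \mathbf c = 0\}$ consists of tuples of block-diagonal matrices $\diag(P_k, \det(P_k)^{-1})$, hence is isomorphic to $\GL_2(\CC)^{d-1}$ with trivial action. On the open subset $Z := \{\mathbf b \neq 0,\ \mathbf c \neq 0\}$ the action is free and every orbit is closed in $\sltres^{d-1}$ (neither the $t\to 0$ nor the $t\to\infty$ limit exists, because $\mathbf c$, resp. $\mathbf b$, diverges), whereas on the two intermediate strata $\{\mathbf b \neq 0,\ \mathbf c = 0\}$ and $\{\mathbf b = 0,\ \mathbf c \neq 0\}$ every orbit specializes into the fixed locus. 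Therefore $Z$ is $\CC^*$-saturated, so $Z/\CC^*$ is open in $\sltres^{d-1}\sslash H_2$ with closed complement $\GL_2(\CC)^{d-1}$, and additivity of $E$-polynomials gives $e(\sltres^{d-1}\sslash H_2) = e(\GL_2(\CC))^{d-1} + e(Z/\CC^*)$.

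Then $e(Z)$ is computed by inclusion–exclusion on the two vanishing conditions: $e(Z) = e(\sltres^{d-1}) - 2\,e(\{\mathbf b = 0\}) + e(\{\mathbf b = 0,\ \mathbf c = 0\})$, where $\{\mathbf b = 0\}$ — the tuples of block-lower-triangular matrices — is isomorphic to $(\GL_2(\CC)\times\CC^2)^{d-1}$ and $\{\mathbf b = 0,\ \mathbf c = 0\} \cong \GL_2(\CC)^{d-1}$; one then substitutes $e(\sltres) = (q^3-1)(q^5-q^3)$ and $e(\GL_2(\CC)) = (q^2-1)(q^2-q)$. Since $\CC^*$ is connected and acts freely on $Z$, \cite[Remark 2.5]{LMN} yields $e(Z/\CC^*) = e(Z)/(q-1)$; plugging this back and collecting terms gives the stated closed form.

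I expect the main obstacle to be the GIT step: one must argue carefully that the intermediate strata $\{\mathbf b \neq 0,\ \mathbf c = 0\}$ and $\{\mathbf b = 0,\ \mathbf c \neq 0\}$ contribute nothing to the quotient (their orbits collapse onto the fixed locus) and that $Z/\CC^*$ is genuinely the open complement of $\GL_2(\CC)^{d-1}$, so that $E$-polynomial additivity applies to this two-piece decomposition. Once that is secured, the remainder is a routine polynomial manipulation.
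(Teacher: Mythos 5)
Your proposal is correct and follows essentially the same route as the paper: the paper likewise splits $\sltres^{d-1}$ into the block-diagonal fixed locus $H_4^{d-1}\cong\GL_2(\CC)^{d-1}$ and the free locus, observes that the two intermediate strata $W_1,W_2$ (where only one off-diagonal block vanishes) are $S$-equivalent to points of the fixed locus, computes $e$ of the free part by inclusion--exclusion, and divides by $q-1$. Your extra remark that the action factors through $t=\lambda^3$ (so the generic stabilizer $\bm{\mu}_3$ causes no discrepancy in the division by $q-1$) is a welcome clarification of a point the paper leaves implicit.
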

\begin{proof}
Explicitly, the action given by $\diag(\lambda,\lambda,\lambda^{-2})\in H_2$ on $A_i=(a^i_{jl})$ takes
$$
\begin{pmatrix} a_{11}^i & a_{12}^i & a_{13}^i \\ a_{21}^i & a_{22}^i & a_{23}^i \\ a_{31}^i & a_{32}^i & a_{33}^i \end{pmatrix} \longrightarrow \begin{pmatrix} a_{11}^i & a_{12}^i & \lambda^{3} a_{13}^i \\ a_{21}^i & a_{22}^i & \lambda^3 a_{23}^i \\ \lambda^{-3} a_{31}^i & \lambda^{-3} a_{32}^i & a_{33}^i \end{pmatrix}.
$$ 
First of all, note that the action is trivial if $A_i\in H_4$, that is, if $a^i_{31}=a^i_{32}=a^i_{13}=a^i_{23}=0$ for all $i=1,\ldots,d-1$. That is,
$$
e(H_4^{d-1}/H_2)=e(H_4^{d-1})= e(\GL_2(\CC))^{d-1}= (q-1)^{d-1}(q^3-q)^{d-1}.
$$
The cases $W_1=\lbrace (a^i_{31})=(a^i_{32})=0\in \CC^{d-1} \rbrace$ and $W_2=\lbrace (a^i_{13})=(a^i_{23})=0\in \CC^{d-1} \rbrace$ yield $S$-equivalent representations to those belonging to $H_4^{d-1}$, where $W_1$ and $W_2$ intersect. Their $E$-polynomial is
$$
e(W_1)=e(W_2)=e(\CC^{2d-2})e(\GL_2(\CC)^{d-1})=q^{2d-2}(q-1)^{d-1}(q^3-q)^{d-1}.
$$

All the remaining representations $(A_i) \in \sltres^{d-1} - (W_1\cup W_2)$ are stable points for the $\CC^{\ast}$-action, so the action is free. As a consequence:
\begin{align*}
e(\sltres^{d-1} \sslash H_2)   & = e\left( (\sltres^{d-1}-(W_1\cup W_2))/H_2 \right) + e(H_4^{d-1}/H_2) \\
 & = \frac{(q^3-1)^{d-1}q^{2d-2}(q^3-q)^{d-1}-(2q^{2d-2}-1)(q-1)^{d-1}(q^3-q)^{d-1}}{q-1} + (q-1)^{d-1}(q^3-q)^{d-1} \\
& =(q^3-q)^{d-1}\left(q^{3d-3}(q+1)^{d-1}(q-1)^{d-2} -(2q^{2d-2}-1)(q-1)^{d-2}+(q-1)^{d-1} \right).
\end{align*}
\end{proof}

\begin{prop}\label{prop:SL3-h4}
We have
\begin{align*}
e(\sltres^{d-1}\sslash H_4) & = \frac{1}{2}\left( (q-1)^{2d-2} + (q^2-1)^{d-1}\right) + (q-1)^{2d-3}((q^2+q)^{d-1} - 2q^{d-1} +1) \\
	& \quad + (q - 1)^{d-1}\big((q - 1)^{d-2}q^{d-2}((q + 1)^{d-2} - 1) + \frac12 (q - 1)^{d-2} - \frac12 (q + 1)^{d-2} \big)   \\ &  \quad + (q-1)^{2d-4} \left( (q^2+q+1)^{d-1}(q+1)^{d-2}q^{3d-4}-(q^2+q)^{d-2}(2q^{2d-2}-1) \right. \\ & \quad  \left. - q^{d-2}(q^{d-1} - 1)^2 - (2q^{2d-3}-1) ((q^2+q)^{d-1} - 2q^{d-1} + 1)  \right).
\end{align*}
\end{prop}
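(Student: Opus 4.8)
The strategy imitates the previous proposition, replacing the torus $H_2$ by the larger group $H_4\cong\GL_2(\CC)$. Write $A\in\sltres$ in block form $A=\left(\begin{smallmatrix}B&v\\ w^{T}&a\end{smallmatrix}\right)$ with $B\in M_2(\CC)$, $v,w\in\CC^2$, $a\in\CC$. The conjugation action of $P\in H_4$ (that is, of $\left(\begin{smallmatrix}P&0\\0&\det(P)^{-1}\end{smallmatrix}\right)$) is $B\mapsto PBP^{-1}$, $v\mapsto\det(P)\,Pv$, $w\mapsto\det(P)^{-1}(P^{-1})^{T}w$, $a\mapsto a$, and the condition $\det A=1$ couples $a$, $\det B$ and the bilinear quantities $w^{T}v$ and $w^{T}Bv$. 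A first reduction: $\bm\mu_3\Id_2<H_4$ maps into the centre $\bm\mu_3\Id_3$ of $\sltres$, hence acts trivially on $\sltres^{d-1}$, so the effective group is $\overline H_4:=H_4/\bm\mu_3\Id_2\cong\GL_2(\CC)$ with $e(\overline H_4)=e(\GL_2(\CC))=(q^3-q)(q-1)$; this is why free loci will be divided by $(q^3-q)(q-1)$ and not by $e(\PGL_2(\CC))=q^3-q$.

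Next I would stratify $\sltres^{d-1}$ according to the simultaneous reducibility type of the block tuple $\mathbf B=(B_1,\dots,B_{d-1})$ (absolutely irreducible; reducible but indecomposable, i.e.\ a single common eigenline; or a sum of two lines) and, within each type, by which of $\mathbf v=(v_i)$, $\mathbf w=(w_i)$ vanishes identically. On the block-diagonal locus $\mathbf v\equiv\mathbf w\equiv0$ one has $\GL_2(\CC)^{d-1}\sslash\GL_2(\CC)=\X_{\GL_2(\CC)}(\Free{d-1})$, whose totally reducible component is $(\CC^{*})^{2(d-1)}/\ZZ_2$ and contributes $\tfrac12\big((q-1)^{2d-2}+(q^2-1)^{d-1}\big)$ by Example~\ref{ex:X11-Z2}, the remaining components of $\X_{\GL_2(\CC)}(\Free{d-1})$ accounting for the $(q-1)^{d-1}(\cdots)$ summand and part of the $(q^2+q)^{d-1}$ terms. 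When $\mathbf B$ is absolutely irreducible, Schur's lemma gives trivial $\PGL_2$-stabiliser; a Hilbert--Mumford analysis shows that if exactly one of $\mathbf v,\mathbf w$ vanishes identically the orbit is \emph{not} closed and its semisimplification lies in the block-diagonal stratum (so such points are absorbed there), whereas if neither vanishes the orbit is closed and $\overline H_4$ acts freely — so this locus contributes $e(\,\cdot\,)/\big((q^3-q)(q-1)\big)$, and its leading part is $e(\sltres^{d-1})/e(\GL_2(\CC))=q^{3d-4}(q-1)^{2d-4}(q+1)^{d-2}(q^2+q+1)^{d-1}$, matching the top term of the last line of the statement. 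The reducible-indecomposable and the two-lines strata with corners not identically zero are handled with the residual Borel, respectively torus and its Weyl $\ZZ_2$, acting on $v_i,w_i$; these reduce, after an inclusion--exclusion over the coordinate patterns of $\mathbf v,\mathbf w$, to the $\lambda=(1,1)$ and $\lambda=(1,1,1)$ counts of Section~\ref{sec:lambda-character-varieties} (Examples~\ref{ex:totally-action-sl2} and~\ref{ex:X111}), which is the source of the $(q^2+q)^{d-1}-2q^{d-1}+1$ and $(q^2+q+1)^{d-1}(q+1)^{d-2}q^{3d-4}$ type corrections, together with the $\tfrac12\big((q+1)^{k}\pm(q-1)^{k}\big)$ factors coming from the Weyl quotient as in Example~\ref{ex:X11-Z2}.

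Summing the contributions of all strata and simplifying yields the stated polynomial. As a consistency check, for $d=2$ the invariant ring of $\sltres$ under $H_4$ is generated by $\operatorname{tr}B$, $\det B$, $a$ and $w^{T}v$ (using Cayley--Hamilton and the relation $\det A=1$ to eliminate $w^{T}Bv$), so $\sltres\sslash H_4\cong\CC^4$; one checks directly that the displayed formula collapses to $q^4$ at $d=2$.

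\textbf{Main obstacle.} The delicate point is the reducible-$\mathbf B$ strata: the GIT quotient replaces each orbit by the closed orbit in its closure, which — when $\mathbf B$ is triangulable — is a block-diagonal degeneration living in a \emph{different} stratum of the naive partition. Deciding precisely which corner configurations survive as genuine points of $\sltres^{d-1}\sslash H_4$ versus which are absorbed into $\X_{\GL_2(\CC)}(\Free{d-1})$, keeping the strata disjoint, and running the inclusion--exclusion over the vanishing patterns of $\mathbf v$ and $\mathbf w$ without double counting (while never confusing division by $e(\GL_2(\CC))$ with division by $e(\PGL_2(\CC))$, a distinction produced solely by the trivially-acting $\bm\mu_3$), is where essentially all the bookkeeping lives.
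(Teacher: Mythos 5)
Your high-level framework does match the paper's: block decomposition under $H_4$, passage to the polystable locus, identification of the block-diagonal stratum with $\X_{\GL_2(\CC)}(\Free{d-1})$, free action of $H_4/\bm{\mu}_3\Id\cong\GL_2(\CC)$ on the closed-orbit irreducible stratum (hence division by $e(\GL_2(\CC))=(q^3-q)(q-1)$ rather than $e(\PGL_2(\CC))$), and your $d=2$ consistency check is correct — the stated formula does collapse to $q^4$. Nevertheless there is a structural flaw in your stratification. You propose to stratify primarily by the reducibility type of the $2\times2$ block tuple $\mathbf B$, and to handle the reducible-$\mathbf B$ strata "with the residual Borel, respectively torus and its Weyl $\ZZ_2$, acting on $v_i,w_i$". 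But a tuple with reducible, even indecomposable, $\mathbf B$ can perfectly well be irreducible relative to the splitting $\langle e_1,e_2\rangle\oplus\langle e_3\rangle$: take $\mathbf B$ upper triangular with unique common eigenline $\langle e_1\rangle$, some $w_k$ with nonzero first coordinate and some $v_k$ not contained in $\langle e_1\rangle$. Such a point has closed orbit and trivial stabilizer in $\GL_2(\CC)/\bm{\mu}_3$, so it belongs to the freely-acted locus and is \emph{not} governed by any residual Borel action; conversely, a quotient by a genuine (non-reductive) Borel action would not be computable by dividing $E$-polynomials anyway. The paper avoids this by stratifying directly by the invariant-subspace structure of the full three-dimensional representation relative to the splitting (whether $\langle e_3\rangle$ or $\langle e_1,e_2\rangle$ is invariant, whether there is an invariant line in $\langle e_1,e_2\rangle$ or an invariant plane containing $e_3$, or none), which is precisely the stratification that controls orbit closures and residual stabilizers. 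Your "Main obstacle" paragraph correctly locates the difficulty, but locating it is not resolving it.

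Beyond that, essentially all of the computation is deferred. The irreducible locus is not just its "leading part" $e(\sltres^{d-1})/e(\GL_2(\CC))$: its $E$-polynomial must be obtained by subtracting the reducible locus, which in the paper requires an inclusion–exclusion over the strata with an invariant line $\ell\subset\langle e_1,e_2\rangle$ and/or an invariant plane $\pi\ni e_3$, fibered over $\mathbb{P}^1$ and over $\mathbb{P}^1\times\mathbb{P}^1$ (split according to whether $\ell\subset\pi$ or $\ell\cap\pi=0$), with further sub-cases on the vanishing of the off-diagonal entries. The semisimple reducible stratum with a one-dimensional invariant subspace inside $\langle e_1,e_2\rangle$ is then identified with $\mathcal{R}_{\GL_2}^{\mu^2}(\Free{d-1})/\CC^*$ from Example \ref{ex:totally-action-sl2} — your appeal to the $\lambda=(1,1)$ count is the right instinct here — but the $\lambda=(1,1,1)$ count of Example \ref{ex:X111} plays no role in this proposition (it enters only for $\sltres^{d-1}\sslash H_3$). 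As written, the proposal is a plausible outline whose decisive steps are missing and whose organizing stratification would have to be replaced before the bookkeeping could be carried out.
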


\begin{proof}
First of all, let us describe explicitly the action. Given $Q=\left(\begin{array}{c | c} P & 0 \\ \hline 0 & \det P^{-1} \end{array}\right) \in H_4$, where $P \in \GL_2(\CC)$, its action by conjugation on $\rho = (A_1, \ldots, A_{d-1})$ takes 
$$
Q (A_k) Q^{-1}=\left( \begin{array}{c | c}  B_k & \left(\begin{matrix} a^k_{13} \\ a^k_{23} \end{matrix} \right) \\ \hline  \left( \begin{matrix} a_{31}^k & a_{32}^k \end{matrix} \right) & a_{33}^k \end{array} \right) = \left( \begin{array}{c | c}  PB_k  P^{-1} & \det(P)P \begin{pmatrix} a_{13}^k \\ a_{23}^k \end{pmatrix} \\ \hline  \det(P)^{-1}\begin{pmatrix} a_{31}^k & a_{32}^k \end{pmatrix} P^{-1} & a_{33}^k \end{array} \right),
$$
where $k=1,\ldots, d-1$.

Consider the splitting $\CC^3_{\bullet} = \langle e_1, e_2 \rangle \oplus \langle e_3 \rangle$. Our space is precisely the $\lambda$-character variety $\mathcal{R}_{\SL(\CC^3_{\bullet})}(\Free{{d-1}}) \sslash H_4$. By Proposition \ref{prop-polystable-Vlambda}, the polystable locus coincides with the $\CC^3_{\bullet}$-semisimple representations. Let us decompose $\mathcal{R}_{\SL(\CC^3_{\bullet})}(\Free{{d-1}})$ according to its semisimple type.
\begin{itemize}
	\item If $\langle e_3 \rangle$ is an invariant subspace. In that case, the semisimple representations have the form
	$$
		\rho = (A_k) = \left(\left(\begin{array}{c | c } B_k & 0 \\ \hline 0 & \det B_k^{-1}\end{array}\right)\right),
	$$
	with $(B_1, \ldots, B_{d-1})$ forming an element of $\mathcal{R}_{\GL_2}(\Free{{d-1}})$. The action of $H_4$ on these matrices coincides exactly with the action of $\GL_2$ on $\mathcal{R}_{\GL_2}(\Free{{d-1}})$, so we get that the quotient space agrees with the $\GL_2$-character variety of the free group $\mathcal{R}_{\GL_2}(\Free{{d-1}}) \sslash \GL_2$. The reducible part of this quotient  is $(\CC^*)^{2d-2}/\ZZ_2$ which, by \cite{gon:2023}, has $E$-polynomial equal to
$$
	e\left((\CC^*)^{2d-2}/\ZZ_2\right) = \frac{1}{2}\left((q-1)^{2d-2} + (q^2-1)^{d-1}\right).
$$
On the other hand, the irreducible part of this quotient was computed in \cite[Lemma 6.5]{florentinonozadzamora:2021}, so adding both contributions we get
\begin{align*}
	e(\mathcal{R}_{\SL(\CC^3_{\bullet})}^{1}(\Free{{d-1}}) \sslash H_4) & = e(\mathcal{R}_{\GL_2}(\Free{{d-1}}) \sslash \GL_2) =  \frac{1}{2}\left( (q-1)^{2d-2} + (q^2-1)^{d-1}\right) \\ & \phantom{=}	+ (q - 1)^{d-1}\big((q - 1)^{d-2}q^{d-2}((q + 1)^{d-2} - 1) + \frac12 (q - 1)^{d-2} - \frac12 (q + 1)^{d-2} \big).
\end{align*}

	Furthermore, with this information at hand, we can also compute the reducible representations, not necessarily semisimple, for which $\langle e_3\rangle$ is an invariant subspace or $\langle e_1, e_2\rangle$ is an invariant plane. This corresponds to representations of the form
	$$
		(A_k) = \left(\left(\begin{array}{c | c } B_k & v_k \\ \hline w_k & \det B_k^{-1}\end{array}\right)\right),
	$$
with $v_k, w_k \in \CC^2$ and such that $(v_1, \ldots, v_{d-1})$ or $(w_1, \ldots, w_{d-1})$ vanish. The upper-left corner has $E$-polynomial equal to $e(\mathcal{R}_{\GL_2}(\Free{{d-1}})) = (q-1)^{2d-2}q^{d-1}(q+1)^{d-1}$, so this stratum $\mathcal{R}_{\SL(\CC^3_{\bullet})}^{1}(\Free{{d-1}})$ of reducible representations has $E$-polynomial
$$
	e(\mathcal{R}_{\SL(\CC^3_{\bullet})}^{1}(\Free{{d-1}})) = (q-1)^{2d-2}q^{d-1}(q+1)^{d-1} \left(2q^{2d-2} - 1\right).
$$

	\item If the representation is reducible and semisimple, but neither $\langle e_1, e_2 \rangle$ nor $\langle e_3\rangle$ are invariant subspaces, then there must exist a $1$-dimensional invariant subspace $V \subset \langle e_1, e_2\rangle$. After conjugation, we can suppose that $V = \langle e_1\rangle$ is the invariant subspace and thus the representation has the form
	$$
		(A_k) = \left(\left(\begin{array}{c c | c } \Delta_k & 0 & 0  \\ 0 & a_k & b_k \\ \hline 0 & c_k & d_k \end{array}\right)\right),
	$$
	with $\Delta_k = (a_kd_k - b_kc_k)^{-1}$ and the representation
$$
	\left(\begin{pmatrix}a_1 & b_1 \\ c_1 & d_1\end{pmatrix}, \ldots, \begin{pmatrix}a_{d-1} & b_{d-1} \\ c_{d-1} & d_{d-1}\end{pmatrix}\right) \in \mathcal{R}_{\GL_2}(\Free{{d-1}})
$$
being irreducible. The residual action on the bottom-right corner is the action of diagonal matrices $\CC^* \times \CC^*$ by conjugation. Hence, the quotient of this stratum $\mathcal{R}_{\SL(\CC^3_{\bullet})}^{2}(\Free{{d-1}})$ of representations is isomorphic to $\mathcal{R}_{\GL_2}^{\mu^2}(\Free{{d-1}}) / \CC^*$ as given in Example \ref{ex:totally-action-sl2} and thus its $E$-polynomial is
$$
	e(\mathcal{R}_{\SL(\CC^3_{\bullet})}^{2}(\Free{{d-1}}) \sslash H_4) = e(\mathcal{R}_{\GL_2}^{\mu^2}(\Free{{d-1}}) / \CC^*) = (q-1)^{2d-3}((q^2+q)^{d-1} - 2q^{d-1} +1).
$$

Let us now count the representations of this form, before taking the quotient. First, suppose that they have an invariant $1$-dimensional subspace $\ell  \subset \langle e_1, e_2 \rangle$, but the spaces $\langle e_1, e_2\rangle$ and $\langle e_3 \rangle $ are not invariant. Let us denote this stratum by $\mathcal{R}_{\SL(\CC^3_{\bullet})}^{2, \alpha}(\Free{{d-1}})$. Since the invariant subspace $\ell_\rho \subset \langle e_1, e_2 \rangle$ of a representation $\rho$ is uniquely determined, we have a natural fibration
$$
	\mathcal{R}_{\SL(\CC^3_{\bullet})}^{2, \alpha}(\Free{{d-1}}) \to \mathbb{P}^1, \qquad \rho \mapsto \ell_\rho.
$$
Obviously, this fibration has trivial monodromy since $\mathbb{P}^1$ is simply-connected. Hence, we have $e(\mathcal{R}_{\SL(\CC^3_{\bullet})}^{2, \alpha}(\Free{{d-1}})) = e(\mathcal{R}_{0}^{2, \alpha}(\Free{{d-1}}))(q+1)$, where $\mathcal{R}_{0}^{2, \alpha}(\Free{{d-1}})$ is the fiber of the previous fibration.

Over $\ell_\rho = \langle e_1\rangle$, the fiber $F$ is given explicitly by representations of the form
$$
		\rho = (A_k) = \left(\left(\begin{array}{c c | c } \Delta_k & \alpha_k & \beta_k  \\ 0 & a_k & b_k \\ \hline 0 & c_k & d_k \end{array}\right)\right),
$$ 
with $\Delta_k = (a_kd_k - b_kc_k)^{-1}$. Notice that, in this case, the representation
$$
	\rho_0=\left(\begin{pmatrix}a_1 & b_1 \\ c_1 & d_1\end{pmatrix}, \ldots, \begin{pmatrix}a_{d-1} & b_{d-1} \\ c_{d-1} & d_{d-1}\end{pmatrix}\right) \in \mathcal{R}_{\GL_2}(\Free{{d-1}})
$$
might not be irreducible. We also have that $(c_1, \ldots, c_{d-1}) \neq (0, \ldots, 0)$ (otherwise, $\langle e_1, e_2 \rangle$ would be an invariant subspace) and $(\beta_1, b_1, \ldots, \beta_{d-1}, b_{d-1}) \neq (0, \ldots, 0)$ (otherwise, $\langle e_3\rangle$ would be an invariant subspace). If we consider the map
\begin{equation}\label{eq:fibration-2-1}
	F \to \mathcal{R}_{\GL_2}(\Free{{d-1}}), \qquad \rho \mapsto \rho_0,
\end{equation}
its fibers are the possible choices of $(\alpha_1, \beta_1, \ldots, \alpha_{d-1}, \beta_{d-1}) \in \CC^{2d-2}$ fulfilling the constraints to belong to this stratum, given a fixed representation $\rho_0$. Notice that, in $\rho_0$, we must have that $(c_1, \ldots, c_{d-1}) \neq (0, \ldots, 0)$, so the $E$-polynomial of the image of (\ref{eq:fibration-2-1}) is
\begin{align*}
	e&(\mathcal{R}_{\GL_2}(\Free{{d-1}}) \cap \{(c_k) \neq 0\}) = e(\mathcal{R}_{\GL_2}(\Free{{d-1}})) -  e(\mathcal{R}_{\GL_2}(\Free{{d-1}}) \cap \{c_k = 0\})  \\
	&= (q-1)^{2d-2}q^{d-1}(q+1)^{d-1} - (q-1)^{2d-2}q^{d-1} =  (q-1)^{2d-2}q^{d-1}((q+1)^{d-1} - 1).
\end{align*}
With this in mind, we have two options:
\begin{itemize}
	\item If $b_k = 0$ for all $k = 1, \ldots, d-1$, then the fiber is given by vectors $(\alpha_1, \beta_1, \ldots, \alpha_{d-1}, \beta_{d-1})$ with $(\beta_1, \ldots, \beta_{d-1}) \neq (0, \ldots, 0)$. This implies that the fiber is isomorphic to $\CC^{d-1} \times (\CC^{d-1} - \{0\})$. The possible representations $\rho_0$ of the free group with $b_k = 0$ for all $k$ and $(c_1, \ldots, c_{d-1}) \neq (0, \ldots, 0)$ have $E$-polynomial
	\begin{align*}
	e&(\mathcal{R}_{\GL_2}(\Free{{d-1}}) \cap \{b_k = 0\}) -  e(\mathcal{R}_{\GL_2}(\Free{{d-1}}) \cap \{b_k, c_k = 0\}) \\
	& = (q-1)^{2d-2}q^{d-1} - (q-1)^{2d-2} =  (q-1)^{2d-2}(q^{d-1} - 1).
\end{align*}
Hence, this stratum contributes with $E$-polynomial
$$
q^{d-1}(q-1)^{2d-2}(q^{d-1} - 1)^2.
$$
	\item If $b_k \neq 0$ for some $k$, then the fiber is given by vectors $(\alpha_1, \beta_1, \ldots, \alpha_{d-1}, \beta_{d-1}) \in \CC^{2d-2}$. This implies that the fiber is isomorphic to $\CC^{2d-2}$. The possible representations $\rho_0$ of the free group with $(b_1, \ldots, b_{d-1}) \neq (0, \ldots, 0)$ and $(c_1, \ldots, c_{d-1}) \neq (0, \ldots, 0)$ have $E$-polynomial
	\begin{align*}
	e&(\mathcal{R}_{\GL_2}(\Free{{d-1}}) \cap \{(c_k) \neq 0\}) - e(\mathcal{R}_{\GL_2}(\Free{{d-1}}) \cap \{(c_k) \neq 0, (b_k) = 0\}) \\
	& = (q-1)^{2d-2} (q^{d-1}(q+1)^{d-1} - 2q^{d-1} + 1),
\end{align*}
and thus it contributes with
$$
q^{2d-2}(q-1)^{2d-2}(q^{d-1}(q+1)^{d-1} - 2q^{d-1} + 1).
$$
\end{itemize}
Therefore, adding up all the contributions, we get
\begin{align*}
e&(\mathcal{R}_{\SL(\CC^3_{\bullet})}^{2, \alpha}(\Free{{d-1}})) =(q-1)^{2d-2}(q+1)  \left(q^{d-1}(q^{d-1} - 1)^2 + q^{2d-2} (q^{d-1}(q+1)^{d-1} - 2q^{d-1} + 1)\right).
\end{align*}

Analogously, if we suppose that there exists a $2$-dimensional invariant subspace $\pi$ containing $e_3$, we get an stratum made of representations conjugated to one of the form
$$
	(A_k) = \left(\left(\begin{array}{c c | c } \Delta_k & 0 & 0  \\ \omega_k & a_k & b_k \\ \hline \eta_k & c_k & d_k \end{array}\right)\right),
$$ 
If we denote this stratum by $\mathcal{R}_{\SL(\CC^3_{\bullet})}^{2, \beta}(\Free{{d-1}})$, the isomorphism $\mathcal{R}_{\SL(\CC^3_{\bullet})}^{2, \alpha}(\Free{{d-1}}) \ni (A, B) \mapsto (A^t, B^t) \in \mathcal{R}_{\SL(\CC^3_{\bullet})}^{2, \beta}(\Free{{d-1}})$ shows that $e(\mathcal{R}_{\SL(\CC^3_{\bullet})}^{2, \beta}(\Free{{d-1}})) = e(\mathcal{R}_{\SL(\CC^3_{\bullet})}^{2, \alpha}(\Free{{d-1}}))$, as computed above. 

However, these spaces are not disjoint. Their intersection is given by representations $\rho$ that contain an invariant line $\ell_\rho \subset \langle e_1, e_2\rangle$ and an invariant plane $\pi_\rho \subset \CC^3$ containing $e_3$. Hence, we have a map
$$
	\mathcal{R}_{\SL(\CC^3_{\bullet})}^{2, \alpha}(\Free{{d-1}}) \cap \mathcal{R}_{\SL(\CC^3_{\bullet})}^{2, \beta}(\Free{{d-1}}) \to \mathbb{P}^1 \times \mathbb{P}^1, \quad \rho \mapsto (\ell_\rho, \pi_\rho^\perp),
$$
where we have identified the planes $\pi$ of $\CC^3$ containing $e_3$ with $\mathbb{P}^1$ through its `normal' direction $\pi^\perp \in \langle e_1, e_2 \rangle$.

This map is surjective, but its fiber depends on the intersection of $\ell_\rho$ and $\pi_\rho$. On the locus $B_1 = \{(\ell, \pi^\perp) \in \mathbb{P}^1 \times \mathbb{P}^1 \,\mid\,  \ell \cap \pi = \{0\}\}$, the fibration is locally trivial since we have a splitting into a direct sum of a $1$-dimensional and a $2$-dimensional invariant subspaces. In particular, for $\ell = \langle e_1 \rangle$ and $\pi = \langle e_2, e_3\rangle$, we get representations of the form
\begin{equation}\label{eq:semisimple}
	(A_k) = \left(\left(\begin{array}{c c | c } \Delta_k & 0 & 0  \\ 0 & a_k & b_k \\ \hline 0 & c_k & d_k \end{array}\right)\right),
\end{equation}
with $(b_1, \ldots, b_{d-1}), (c_1, \ldots, c_{d-1}) \neq (0, \ldots, 0)$.
Hence, the $E$-polynomial of the fiber is
\begin{align*}
e(\mathcal{R}_{\GL_2}(\Free{{d-1}}) \cap \{(b_k), (c_k) \neq 0\}) &  = e(\mathcal{R}_{\GL_2}(\Free{{d-1}})) - e(\mathcal{R}_{\GL_2}(\Free{{d-1}}) \cap \{(b_k) = 0\}) \\ & \quad  - e(\mathcal{R}_{\GL_2}(\Free{{d-1}}) \cap \{(c_k) = 0\}) + e(\mathcal{R}_{\GL_2}(\Free{{d-1}}) \cap \{(b_k) = (c_k) = 0\}) \\
& = (q-1)^{2d-2}(q^{d-1}(q+1)^{d-1} - 2q^{d-1} + 1).
\end{align*}

Analogously, on the locus $B_2 = \{(\ell, \pi^\perp) \in \mathbb{P}^1 \times \mathbb{P}^1 \,\mid\,  \ell \subset \pi\} \cong \mathbb{P}^1$, we have another locally trivial fibration whose fiber over  $\ell = \langle e_1 \rangle$ and $\pi = \langle e_1, e_3\rangle$ is given by representations of the form
\begin{equation}\label{eq:semisimple}
	(A_k) = \left(\left(\begin{array}{c c | c } \Delta_k & \alpha_k & \beta_k  \\ 0 & a_k & 0  \\ \hline 0 & c_k & d_k \end{array}\right)\right),
\end{equation}
with $(b_1, \ldots, b_{d-1}), (c_1, \ldots, c_{d-1}) \neq (0, \ldots, 0)$. There representations are exactly isomorphic to the stratum $(b_k) = 0$ of $\mathcal{R}_{\SL(\CC^3_{\bullet})}^{2, \alpha}(\Free{{d-1}})$, which we know has $E$-polynomial equal to $q^{d-1}(q-1)^{2d-2}(q^{d-1} - 1)^2$.

Therefore, taking into account that $e(B_2) = q+1$ and $e(B_1) = e(\mathbb{P}^1)^2 - e(B_2) = q(q+1)$, we get that
\begin{align*}
	e(\mathcal{R}_{\SL(\CC^3_{\bullet})}^{2, \alpha}(\Free{{d-1}}) \cap \mathcal{R}_{\SL(\CC^3_{\bullet})}^{2, \beta}(\Free{{d-1}})) & =q(q+1)(q-1)^{2d-2}(q^{d-1}(q+1)^{d-1} - 2q^{d-1} + 1) \\
	& \qquad + (q+1)q^{d-1}(q-1)^{2d-2}(q^{d-1} - 1)^2.
\end{align*}

Therefore, by the inclusion-exclusion principle, we finally get that
\begin{align*}
	e(\mathcal{R}_{\SL(\CC^3_{\bullet})}^{2}(\Free{{d-1}})) & = e(\mathcal{R}_{\SL(\CC^3_{\bullet})}^{2, \alpha}(\Free{{d-1}})) + e(\mathcal{R}_{\SL(\CC^3_{\bullet})}^{2, \beta}(\Free{{d-1}})) - e(\mathcal{R}_{\SL(\CC^3_{\bullet})}^{2, \alpha}(\Free{{d-1}}) \cap \mathcal{R}_{\SL(\CC^3_{\bullet})}^{2, \beta}(\Free{{d-1}})) \\
	&= (q-1)^{2d-2}(q+1)  \left(q^{d-1}(q^{d-1} - 1)^2 + (2q^{2d-2}-q) (q^{d-1}(q+1)^{d-1} - 2q^{d-1} + 1)\right).
\end{align*}

	\item Irreducible representations. The stabilizer of these representations is the finite subgroup $\bm{\mu}_{3} \subset H_4$ of multiples of the identity, and thus the action of $H_4 / \bm{\mu}_{3}  = \GL_2( \CC) / \bm{\mu}_{3} $ is free. The $E$-polynomial of this stratum is given by
\begin{align*}
	e(\mathcal{R}_{\SL(\CC^3_{\bullet})}^{\textrm{irr}}(\Free{{d-1}})) &  = e(\mathcal{R}_{\SL(\CC^3_{\bullet})}(\Free{{d-1}})) - e(\mathcal{R}_{\SL(\CC^3_{\bullet})}^{1}(\Free{{d-1}}))  - e(\mathcal{R}_{\SL(\CC^3_{\bullet})}^{2}(\Free{{d-1}})) \\ 
& = (q-1)^{2d-2} \left( (q^2+q+1)^{d-1}(q+1)^{d-1}q^{3d-3}-(q^2+q)^{d-1}(2q^{2d-2}-1) \right. \\ &  \quad  \left. - (q+1)\left( q^{d-1}(q^{d-1} - 1)^2 + (2q^{2d-2}-q) ((q^2+q)^{d-1} - 2q^{d-1} + 1) \right) \right).
\end{align*}
Hence, taking into account that $e(\GL_2( \CC) / \bm{\mu}_{3}) =  e(\GL_2( \CC))$, the $E$-polynomial of the quotient is given by
\begin{align*}
	e(\mathcal{R}_{\SL(\CC^3_{\bullet})}^{\textrm{irr}}(\Free{{d-1}}) \sslash H_4)  & = e(\mathcal{R}_{\SL(\CC^3_{\bullet})}^{\textrm{irr}}(\Free{{d-1}}))/e(\GL_2(\CC)) \\ & = (q-1)^{2d-4} \left( (q^2+q+1)^{d-1}(q+1)^{d-2}q^{3d-4}-(q^2+q)^{d-2}(2q^{2d-2}-1) \right. \\ & \quad  \left. - q^{d-2}(q^{d-1} - 1)^2 - (2q^{2d-3}-1) ((q^2+q)^{d-1} - 2q^{d-1} + 1)  \right).
\end{align*}

\end{itemize}

Putting all together, we get
\begin{align*}
	e(\mathcal{R}_{\SL(\CC^3_{\bullet})}(\Free{{d-1}}) \sslash H_4)  & = e(\mathcal{R}_{\SL(\CC^3_{\bullet})}^{1}(\Free{{d-1}}) \sslash H_4)  + e(\mathcal{R}_{\SL(\CC^3_{\bullet})}^{2}(\Free{{d-1}}) \sslash H_4)  + e(\mathcal{R}_{\SL(\CC^3_{\bullet})}^{\textrm{irr}}(\Free{{d-1}}) \sslash H_4) \\ 
	& = \frac{1}{2}\left( (q-1)^{2d-2} + (q^2-1)^{d-1}\right) + (q-1)^{2d-3}((q^2+q)^{d-1} - 2q^{d-1} +1) \\
	& \quad + (q - 1)^{d-1}\big((q - 1)^{d-2}q^{d-2}((q + 1)^{d-2} - 1) + \frac12 (q - 1)^{d-2} - \frac12 (q + 1)^{d-2} \big)   \\ & \quad + (q-1)^{2d-4} \left( (q^2+q+1)^{d-1}(q+1)^{d-2}q^{3d-4}-(q^2+q)^{d-2}(2q^{2d-2}-1) \right. \\ & \quad   \left. - q^{d-2}(q^{d-1} - 1)^2 - (2q^{2d-3}-1) ((q^2+q)^{d-1} - 2q^{d-1} + 1)  \right),
\end{align*}
which gives the desired result.

\end{proof}

\begin{prop}\label{prop:h4-h4}
We have
$$
e(H_4^{d-1}\sslash H_4) = (q-1)^{d-1}\left( (q^3-q)^{d-2}-(q^2-q)^{d-2}+q\left( \frac{1}{2}(q+1)^{d-2}+\frac{1}{2}(q-1)^{d-2} \right) \right) ,
$$
where the action is given by simultaneous conjugation, $(A_1,\ldots,A_{d-1})\rightarrow (PA_1P^{-1},\ldots,PA_{d-1}P^{-1})$, $P\in H_4$ and $H_4$ 
is defined in \eqref{def:subgroupsSL3}.
\end{prop}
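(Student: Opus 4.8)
The plan is to recognise this GIT quotient as the $\GL_2(\CC)$-character variety of a free group and then reuse a computation already performed in the proof of Proposition~\ref{prop:SL3-h4}. Recall from \eqref{def:subgroupsSL3} that $H_4 \cong \GL_2(\CC)$, the isomorphism sending an element to its upper-left $2\times2$ block $P$; conjugation within $H_4$ affects only this block and is exactly ordinary $\GL_2(\CC)$-conjugation. Hence the simultaneous conjugation action of $H_4$ on $H_4^{d-1}$ coincides with the simultaneous conjugation action of $\GL_2(\CC)$ on $\GL_2(\CC)^{d-1} = \mathcal{R}_{\GL_2}(\Free{{d-1}})$, so
\[
 e(H_4^{d-1}\sslash H_4) = e\big(\mathcal{R}_{\GL_2}(\Free{{d-1}})\sslash \GL_2(\CC)\big),
\]
the $E$-polynomial of the $\GL_2(\CC)$-character variety of the free group on $d-1$ generators.

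This last quantity was already computed inside the proof of Proposition~\ref{prop:SL3-h4} (the stratum where $\langle e_3\rangle$ is an invariant subspace): splitting $\mathcal{R}_{\GL_2}(\Free{{d-1}})$ into its irreducible and polystable-reducible loci, the reducible polystable part of the quotient is $(\CC^*)^{2d-2}/\ZZ_2$, with $E$-polynomial $\tfrac12\big((q-1)^{2d-2}+(q^2-1)^{d-1}\big)$ by \cite{gon:2023}, while on the irreducible locus the action of $\PGL_2(\CC)$ is free and the quotient has the $E$-polynomial determined in \cite[Lemma~6.5]{florentinonozadzamora:2021}. I would simply invoke the resulting identity
\begin{align*}
 e\big(\mathcal{R}_{\GL_2}(\Free{{d-1}})\sslash\GL_2(\CC)\big) &= \tfrac12\big((q-1)^{2d-2}+(q^2-1)^{d-1}\big) \\
 &\quad + (q-1)^{d-1}\Big((q-1)^{d-2}q^{d-2}\big((q+1)^{d-2}-1\big) + \tfrac12(q-1)^{d-2} - \tfrac12(q+1)^{d-2}\Big).
\end{align*}
Alternatively, one can re-derive $e(\mathcal{R}_{\GL_2}^{\mathrm{irr}}(\Free{{d-1}}))$ from scratch by subtracting the reducible locus from $e(\GL_2(\CC)^{d-1}) = q^{d-1}(q-1)^{2d-2}(q+1)^{d-1}$, analysing the reducible locus through the incidence variety $\{(\ell,(A_k)) : \ell\ \text{a common invariant line of the } A_k\}\to\mathbb{P}^1$, which is a Zariski-locally trivial Borel bundle, and then correcting for the number of common invariant lines (one for the reducible non-semisimple representations, two for the reducible semisimple non-central ones, and a whole $\mathbb{P}^1$ for the central ones).

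Finally, it remains to check that the displayed polynomial agrees with the one in the statement. Using $(q^3-q)^{d-2} = q^{d-2}(q-1)^{d-2}(q+1)^{d-2}$, $(q^2-q)^{d-2} = q^{d-2}(q-1)^{d-2}$, $(q^2-1)^{d-1} = (q-1)^{d-1}(q+1)^{d-1}$ and $(q-1)^{2d-2} = (q-1)^{d-1}(q-1)^{d-1}$, both expressions collapse to
\[
 (q-1)^{d-1}(q-1)^{d-2}q^{d-2}\big((q+1)^{d-2}-1\big) + \tfrac{q}{2}(q-1)^{d-1}\big((q+1)^{d-2}+(q-1)^{d-2}\big),
\]
which completes the argument. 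The only real obstacle is bookkeeping: in the direct route one must be careful about the over-counting of the semisimple and central representations in the incidence variety, and verify that all fibrations involved induce trivial monodromy on cohomology so that multiplicativity of the $E$-polynomial applies; in the route that quotes Proposition~\ref{prop:SL3-h4}, the remaining work is just the routine but error-prone algebraic rearrangement above.
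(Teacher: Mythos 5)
Your proposal is correct and takes essentially the same approach as the paper: the paper likewise observes that conjugation by $H_4$ only affects the upper-left $2\times 2$ block, identifies the quotient with the $\GL_2(\CC)$-character variety of the free group on $d-1$ generators, and quotes its known $E$-polynomial (citing \cite{mozgovoyreineke:2015} rather than the computation inside Proposition~\ref{prop:SL3-h4}, but these are the same quantity). Your algebraic verification that the two closed forms coincide is also correct.
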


\begin{proof}
Given $Q=\left(\begin{array}{c | c} P & 0 \\ \hline 0 & \det(P)^{-1} \end{array}\right)$, and $(A_k) \in H_4^{d-1}$, the action can be described as
$$
(A_k)=\left(\left( \begin{array}{c | c}  B_k & \begin{matrix} 0 \\ 0 \end{matrix} \\ \hline  \begin{matrix} 0 & 0 \end{matrix} & \det(B_k)^{-1} \end{array} \right) \right) \longrightarrow Q(A_k)Q^{-1} \left( \begin{array}{c | c}  PB_kP^{-1} & \begin{matrix} 0 \\ 0 \end{matrix} \\ \hline  \begin{matrix} 0 & 0 \end{matrix} & \det(B_k)^{-1} \end{array} \right).
$$
The quotient is thus isomorphic to the free $\GL_2(\CC)$-character variety of 
$d-1$ elements, whose $E$-polynomial was computed in \cite{mozgovoyreineke:2015}.
\end{proof}

\section{$E$-polynomials of $\X^d_{\SL_3}(n,m)$}\label{sec:E-poly-rank3}

We compute the $E$-polynomial of each stratum $\cW_{H_1,H_2}$ following the description given in Section \ref{sec:sl3charactervariety} and the different choices for $H_1,H_2$ following the definitions given in \eqref{def:subgroupsSL3}.

\begin{itemize} 
\item[$\bm{H_1},\bm{{\SL}_3}$.] The $E$-polynomial of the stratum coincides with the one of the irreducible locus $\X(n,m)^{\ast}$.
From \cite{munozporti:2016}, the $E$-polynomial of each of the irreducible components of dimension $4$ is $q^4+4q^3-9q^2-3q+12$, and there are $\frac{1}{12}(m-1)(m-2)(n-1)(n-2)$ of them. Non-maximal components are isomorphic to $(\CC^{\ast})^2$ minus a line with two points removed, so their $E$-polynomial is equal to $(q-1)^2-(q-2)=q^2-3q+3$, and there are $\frac{1}{2}(n-1)(m-1)(n+m-4)$ of them. We thus get that 
\begin{align*}
e(\cW_{H_1,\SL_3}) & = (q^3-q)^{d-1}(q^5-q^3)^{d-1}   \left(\frac{1}{12}(m-1)(m-2)(n-1)(n-2) (q^4+4q^3-9q^2-3q+12)  \right.\\ 	 
& \quad \left. +\frac{1}{2}(n-1)(m-1)(n+m-4)(q^2-3q+3) \right).
\end{align*}

\item[$\bm{H_2},\bm{{\SL}_3}$.] The $E$-polynomial of the strata which come from partially reducible representation is computed as follows. From the description given in \cite{munozporti:2016},
$$
e(\cW_{H_2,\SL_3}) = e\left(\left( \X(n,m)_{\SL_2}^{\ast}\times \bm{\mu}_{3mn} \right) / \bm{\mu}_2\right) \, e(\sltres^{d-1}\sslash H_2).
$$
We observe that the $\bm{\mu}_2$-action interchanges components in $\X(n,m)_{\SL_2}^*$ leaving $\bm{\mu}_{3mn}$ invariant. When $n,m$ are odd, using Proposition \ref{eqn:epolySL3H2} and the description of $\X_{\SL_2}^*$ given in Section \ref{sec:sl2geodesc}, we get
\begin{align*}
e(\cW_{H_2,\SL_3}) & = e(\X(n,m)_{\SL_2}^{\ast}/\bm{\mu}_2)e(\bm{\mu}_{3mn})e(\sltres^{d-1}\sslash H_2), \\
& = \frac{3mn}{4}(m-1)(n-1)(q-2)e(\sltres^{d-1}\sslash H_2).
\end{align*}
When $n$ is even, there are $(m-1)/2$ extra components that are preserved by $\bm{\mu}_2 \subset \bm{\mu}_{3mn}$. These components are parametrized by $r \in \CC-\lbrace 0,1 \rbrace$, and the $\bm{\mu}_2$-action takes $r\to 1-r$, so the quotient is parametrized by $\CC^*$. Thus, when $n$ is even
\begin{align*}
e(\cW_{H_2,\SL_3}) & = e(\X(n,m)^{\ast}\times \bm{\mu}_{3mn}/\bm{\mu}_2)e(\sltres^{d-1}\sslash H_2) \\
& = 3mn\left(\frac{1}{4}(m-1)(n-2)(q-2))+\frac{(m-1)}{2}(q-1) \right)e(\sltres^{d-1}\sslash H_2).
\end{align*}
Both cases can be written jointly as
\begin{align*}
e(\cW_{H_2,\SL_3}) & = 3mn\left(\left\lfloor\frac{m-1}{2}\right\rfloor \left\lfloor \frac{n-1}{2}\right\rfloor(q-2)+\delta_n(m-1)(q-1)\right)e(\SL_3(\CC)^{d-1}\sslash H_2),
\end{align*}
where $\delta_n = 1$ if $n$ is even and $\delta_n = 0$ if $n$ is odd.

\item[$\bm{H_2},\bm{H_4}.$] The description given in \eqref{eqn:VH2H4} provides a similar computation to the one that was performed in the previous case, since the $\bm{\mu}_2$-action is the same and leaves $\CC^{\ast}-\bm{\mu}_{3mn}$ invariant. We thus obtain
\begin{equation*}
e(\cW_{H_2,H_4}) = (q-3mn-1)\left(\left\lfloor\frac{m-1}{2}\right\rfloor\left\lfloor\frac{n-1}{2}\right\rfloor(q-2)+\delta_n(m-1)(q-1)\right)e(H_4^{d-1}\sslash H_2),
\end{equation*}
Now, observe that the action of $H_2 \cong \CC^*$ on $H_4^{d-1} \cong \GL_2( \CC)^{d-1}$ is trivial, so we get $e(H_4^{d-1}\sslash H_2) = e(\GL_2( \CC))^{d-1} = (q-1)^{2d-2}(q^2+q)^{d-1}$.

\item[$\bm{{\SL}_3},\bm{{\SL}_3}$.] Since $\cW_{\SL_3,\SL_3}$ consists of three copies of the character variety of the free group of $d-1$ generators, we get its $E$-polynomial from \cite{lawtonmunoz:2016}. Hence,
\begin{align*}
e(\cW_{\SL_3,\SL_3}) & = 3\left( \vphantom{\frac{1}{3}}(q^8-q^6-q^5+q^3)^{d-2} + (q-1)^{2d-4}(q^{3d-6}-q^{d-1}) \right. \\
& \quad +\frac{1}{6}(q-1)^{2d-4}q(q+1)+\frac{1}{2}(q^2-1)^{d-2}q(q-1) \\ & \quad \left. 
+\frac{1}{3}(q^2+q+1)^{d-2}q(q+1)-(q-1)^{d-2}q^{d-2}(q^2-1)^{d-2}(2q^{2d-4}-q) \right).
\end{align*}

\item[$\bm{H_4},\bm{{\SL}_3}$.] $V^0_{H_4,\SL_3}$ is a finite collection of points, so
$$
e(\cW_{H_4,\SL_3}) = (3mn-3)e(\SL_3(\CC)^{d-1}\sslash H_4).
$$ 
The $E$-polynomial $e(\SL_3(\CC)^{d-1}\sslash H_4)$ is given by Proposition \ref{prop:SL3-h4}.

\item[$\bm{H_4},\bm{H_4}$.] In this case, 
\begin{align*}
e(\cW_{H_4,H_4})& =e(\CC^{\ast}-\bm{\mu}_{3mn})e(H_4^{d-1}\sslash H_4) = (q-3mn-1)e(H_4^{d-1}\sslash H_4).
\end{align*}
The $E$-polynomial $e(H_4^{d-1}\sslash H_4)$ is given by Proposition \ref{prop:h4-h4}.

\item[$\bm{H_3},\bm{{\SL}_3}$.] $V^0_{H_3,\SL_3}$ is a collection of $3m^2n^2-9mn+6$ points, and $N_{H_3}= S_3$ acts freely on it. Hence, we have
$$
	e(\cW_{H_3,{\SL}_3}) = e(V^0_{H_3,\SL_3}/S_3) e(\SL_3^{d-1} \sslash H_3) = \frac{3m^2n^2-9mn+6}{6}e(\SL_3^{d-1} \sslash H_3).
$$
The $E$-polynomial of $e(\SL_3^{d-1} \sslash H_3)$ is $ e(\mathcal{X}_{(1,1,1)}(\Free{d-1}))/(q-1)^{d-1}$, where the latter is computed in Example \ref{ex:X111}.

\item[$\bm{H_3},\bm{H_4}$.] In this case, from \cite{gonmun:2022},
$$
e(V^0_{H_3,H_4}) = \left( \left\lfloor \frac{mn}{2} \right\rfloor (q-1)- \frac{3mn(mn-1)}{2} \right) T + \left( \left\lfloor \frac{mn-1}{2} \right\rfloor (q-1)- \frac{3mn(mn-1)}{2} \right) N,
$$
 and from the fact that $H_4^{d-1}\sslash H_3 = \GL_{2}^{d-1} \sslash (\CC^*)^2$, its equivariant $E$-polynomial is $e_{\ZZ_2}(H_4^{d-1}\sslash H_3) = e_{\ZZ_2}(\mathcal{X}_{(1,1)}(\Free{{d-1}}))$, where this polynomial is computed in Example \ref{ex:X11-Z2}. Thus, we have
\begin{align*}
 e_{\ZZ_2}(H_4^{d-1}\sslash H_3) & = \left({q^{d}\left(q + 1\right)}^{d-2} {\left(q - 1\right)}^{2d-3} - q^{d-1} {\left(q - 1\right)}^{2  d-3} + \frac{1}{2} \left(q{\left(q - 1\right)}^{2  d-3}  + {\left(q^{2} - q\right)} {\left(q^{2} - 1\right)}^{d-2}\right)\right)\,T \\
	& \quad + \left( q^{d-1}{\left(q + 1\right)}^{d-2} {\left(q - 1\right)}^{2  d-3} -  q^{d-1}{{\left(q - 1\right)}^{2  d-3}} + \frac{1}{2} \left({q\left(q - 1\right)}^{2d-3} - {\left(q^{2} - q\right)} {\left(q^{2} - 1\right)}^{d-2}\right)\right)\,N.
\end{align*}

Now, recall that $\left\lfloor \frac{k}{2} \right\rfloor + \left\lfloor \frac{k-1}{2} \right\rfloor = k -1$ for any positive integer $k$ and that $\left\lfloor \frac{k}{2} \right\rfloor - \left\lfloor \frac{k-1}{2} \right\rfloor = \delta_k$, where again $\delta_k = 1$ if $k$ is even and $\delta_k = 0$ otherwise. We get that $e(\cW_{H_3,H_4})$ is the $T$-coefficient of $e_{S_3}(V^0_{H_3,H_4})\otimes e_{S_3}(H_4^{d-1}\sslash H_3)$ and thus
\begin{align*}
e(\cW_{H_3,H_4}) & = (mn-1)\left( q-3mn-1 \right) \left(\frac{1}{2} q{\left(q - 1\right)}^{2d-3} - q^{d-1} {\left(q - 1\right)}^{2d-3}\right) + \frac{\delta_{nm}}{2} q(q-1)^{d} (q + 1)^{d-2}  \\
& \quad + \left( q\left\lfloor \frac{mn}{2} \right\rfloor + \left\lfloor \frac{mn-1}{2} \right\rfloor\right) {q^{d-1}\left(q + 1\right)}^{d-2} {\left(q - 1\right)}^{2d-2} 
 -\frac{3mn(mn-1)}{2} {q^{d-1}\left(q + 1\right)}^{d-1} {\left(q - 1\right)}^{2d-3}.
\end{align*}
\item[$\bm{H_3},\bm{H_3}$.] Since
$$
\cW_{H_3,H_3}=(V^0_{H_3,H_3}\times H_3^{d-1})/ S_3\, ,
$$
we may use the equivariant polynomials of $V^0_{H_3,H_3}$ from \cite{gonmun:2022}, so
\begin{align*}
e_{S_3}(V^0_{H_3,H_3}) & = \left( q^2-q-\left\lfloor \frac{mn}{2} \right\rfloor (q-1)+m^2n^2 \right) T - \left( \left\lfloor \frac{mn-1}{2} \right\rfloor (q-1)-m^2n^2 \right) S  \\ & \quad -((mn+1)(q-1)-2m^2n^2) D,
\end{align*}
and also $ e_{S_3}(H_3^{d-1})=(e_{S_3}(H_3))^{d-1}= (q^2 T+S-qD)^{d-1}$, so
\begin{align}
e_{S_3}(\cW_{H_3,H_3}) & = \left( \left( q^2-q-\left\lfloor \frac{mn}{2} \right\rfloor (q-1)+m^2n^2 \right) T - \left( \left\lfloor \frac{mn-1}{2} \right\rfloor (q-1)-m^2n^2 \right) S \right. \nonumber \\ & \quad \left. -((mn+1)(q-1)-2m^2n^2) D \vphantom{\frac{mn-1}{2}}\right)\otimes (q^2T +S-qD)^{d-1}. \label{eqn:equivariantWH3H3}
\end{align}
From \cite{lawtonmunoz:2016}, we get that
\begin{align*}
(q^2T+S-qD)^{d-1} & = \left( \frac{(q^2-1)^{d-1}}{2}+\frac{1}{6}(q-1)^{2d-2}+\frac{1}{3}(q^2+q+1)^{d-1} \right) T \\ & \quad  + \left( -\frac{(q^2-1)^{d-1}}{2}+\frac{1}{6}(q-1)^{2d-2}+\frac{1}{3}(q^2+q+1)^{d-1} \right) S \\ & \quad +\frac{1}{3} \left((q-1)^{2d-2}-(q^2+q+1)^{d-1}\right) D,
\end{align*}
using the multiplication table in Example \ref{ex:6}.

Subtituting its expression into \eqref{eqn:equivariantWH3H3}, its $T$-coefficient provides the $E$-polynomial of $\cW_{H_3,H_3}$, which equals
\begin{align*}
e(\cW_{H_3,H_3}) & = \left( q^2-q-\left\lfloor \frac{mn}{2} \right\rfloor (q-1)+m^2n^2 \right) \left( \frac{(q^2-1)^{d-1}}{2}+\frac{1}{6}(q-1)^{2d-2}+\frac{1}{3}(q^2+q+1)^{d-1} \right) \\ & \quad - \left( \left\lfloor \frac{mn-1}{2} \right\rfloor (q-1)-m^2n^2 \right)\left( -\frac{(q^2-1)^{d-1}}{2}+\frac{1}{6}(q-1)^{2d-2}+\frac{1}{3}(q^2+q+1)^{d-1} \right) \\ &  \quad -\frac{1}{3}((mn+1)(q-1)-2m^2n^2) \left((q-1)^{2d-2}-(q^2+q+1)^{d-1}\right)  \\ 
& = \frac{(q^2-1)^{d-1}}{2}(q-1)\left(q-\delta_{mn} \right) + \frac{1}{6}(q-1)^{2d-2} \left((q-1)\left(q-3mn-1\right) +6m^2n^2 \right) \\ & \quad 
+\frac{1}{3}(q^2+q+1)^{d-1} (q-1)(q+2).
\end{align*}
\end{itemize}

Adding up all the contributions, we finally get
\begingroup
\allowdisplaybreaks
\begin{align*}
e(\X_{\SL_3(\CC)}(K_{n,m}^d)) = & \phantom{+}  \frac{(q^3-q)^{d-1}}{12}(q^5-q^3)^{d-1} \left((m-1)(m-2)(n-1)(n-2) (q^4+4q^3-9q^2-3q+12) \right.  \\  & \left.  + \,6(n-1)(m-1)(n+m-4)(q^2-3q+3) \right) \\
& + 3mn\left(\left\lfloor\frac{m-1}{2}\right\rfloor \left\lfloor \frac{n-1}{2}\right\rfloor(q-2)+\delta_n(m-1)(q-1)\right) (q^3-q)^{d-1} \\ & \quad \left(q^{3d-3}(q+1)^{d-1}(q-1)^{d-2} -(2q^{2d-2}-1)(q-1)^{d-2}+(q-1)^{d-1} \right) \\ 
& + (q-3mn-1)\left(\left\lfloor\frac{m-1}{2}\right\rfloor\left\lfloor\frac{n-1}{2}\right\rfloor(q-2)+\delta_n(m-1)(q-1)\right) (q-1)^{2d-2}(q^2+q)^{d-1} \\ 
& +3(q^8-q^6-q^5+q^3)^{d-2} + 3(q-1)^{2d-4}(q^{3d-6}-q^{d-1}) \\ & +\frac{1}{2}(q-1)^{2d-4}q(q+1)+\frac{3}{2}(q^2-1)^{d-2}q(q-1) \\ &  
+(q^2+q+1)^{d-2}q(q+1)-3(q-1)^{d-2}q^{d-2}(q^2-1)^{d-2}(2q^{2d-4}-q)  \\ 
& + (3mn-3)\left( \vphantom{\frac{1}{3}} \frac{1}{2}\left( (q-1)^{2d-2} + (q^2-1)^{d-1}\right) + (q-1)^{2d-3}((q^2+q)^{d-1} - 2q^{d-1} +1) \right. \\& + (q - 1)^{d-1}\big((q - 1)^{d-2}q^{d-2}((q + 1)^{d-2} - 1) + \frac12 (q - 1)^{d-2} - \frac12 (q + 1)^{d-2} \big)   \\ &   + (q-1)^{2d-4} \left( (q^2+q+1)^{d-1}(q+1)^{d-2}q^{3d-4}-(q^2+q)^{d-2}(2q^{2d-2}-1) \right. \\ &  \left. -\, q^{d-2}(q^{d-1} - 1)^2 - (2q^{2d-3}-1) ((q^2+q)^{d-1} - 2q^{d-1} + 1))  \vphantom{\frac{1}{3}}\right) \\
& +(q-3mn-1)(q-1)^{d-1}\left( (q^3-q)^{d-2}-(q^2-q)^{d-2}+ \frac{1}{2}q(q+1)^{d-2}+\frac{1}{2}q(q-1)^{d-2} \right) \\ & + \frac{m^2n^2-3mn+2}{2} \left( \vphantom{\frac{1}{3}} (q-1)^{2d-2}+3(q-1)^{2d-3}((q^2+q)^{d-1}-2q^{d-1}+1) \right. \\ 
	& \left. +\,(q-1)^{2d-4}\left((q^2+q+1)^{d-1}(q+1)^{d-1}q^{3d-3}-3(q+1)^{d-1}(2q^{3d-3}-q^{d-1}) \right. \right. \\ & \left. \left. +6q^{d-1}(q^{2d-2}-1)+2  \right) \vphantom{\frac{1}{3}} \right) + (mn-1)\left( q- 3mn-1 \right) \left(\frac{1}{2} q{\left(q - 1\right)}^{2d-3} - q^{d-1} {\left(q - 1\right)}^{2d-3}\right) \\ & + \frac{\delta_{nm}}{2} q(q-1)^{d} (q + 1)^{d-2}  + \left( q\left\lfloor \frac{mn}{2} \right\rfloor + \left\lfloor \frac{mn-1}{2} \right\rfloor\right) {q^{d-1}\left(q + 1\right)}^{d-2} {\left(q - 1\right)}^{2d-2} 
 \\  & -\frac{3mn(mn-1)}{2} {q^{d-1}\left(q + 1\right)}^{d-1} {\left(q - 1\right)}^{2d-3} + \frac{(q^2-1)^{d-1}}{2}(q-1)\left(q-\delta_{mn} \right)  \\ & + \frac{1}{6}(q-1)^{2d-2} \left((q-1)\left(q-3mn-1\right) +6m^2n^2 \right) +\frac{1}{3}(q^2+q+1)^{d-1} (q-1)(q+2),
\end{align*}
\endgroup
for all $d, n, m \geq 1$ with $n$ and $m$ coprime and $m$ odd.

\begin{rem}
The previous calculation generalizes the two known cases of $E$-polynomials of torus links.
\begin{itemize}
	\item For $d = 1$, we recover the case of torus knots, matching \cite[Theorem 8.3]{munozporti:2016}.
	\item For $d=2$ and $n=1$, we recover the so-called twisted Hopf links, as studied in \cite{gonmun:2022}. The previous formula gives
\begin{align*}
e(\X^2_{\SL_3}(m,1)) &= q^4 + q^2 + 1 + \frac{1}{2}(n^2 - 3n + 2)\left(q^6 + 2q^5 - 4q^4 + q^3 + 3q^2 - 3q + 2\right) \\
& \qquad + 3(n - 1)(q^4 - q^3 + q^2 - q + 1) + (n - 1)(q-1)\left(q^3 - 2q^2 + q \right) -  \left\lfloor \frac{n-1}{2} \right\rfloor (q^3 - 2q^2 + 1)(q-1).
\end{align*}
This formula corrects the one computed in the original version \cite{gonmun:2022}, for which the stratum $\mathfrak{M}_{2,1}^{\textrm{red}}(H_n, \SL_3(\CC))$ was incorrectly computed. The current version of that manuscript in arXiv corrects this error.
\end{itemize}
\end{rem}

\end{document}